\theoremstyle{plain}
\newtheorem{thm}{Theorem}[section]
\newtheorem{lem}[thm]{Lemma}
\newtheorem{pro}[thm]{Proposition}
\newtheorem{cor}[thm]{Corollary}
\newtheorem{problem}{Problem}
\theoremstyle{remark}
\newtheorem{rem}[thm]{Remark}
\theoremstyle{definition} 
\newtheorem{definition}[thm]{Definition}
\newcommand{\Z}{\mathbb{Z}}   
\newcommand{\Q}{\mathbb{Q}}
\newcommand{\N}{\mathbb{N}}
\newcommand{\F}{\mathbb{F}}
\newcommand{\ackn}{  \noindent{\sc Acknowledgement }\hspace{5pt} }
\renewcommand{\phi}{\varphi}
\begin{document}

\author{Ilir Snopce}
\address{Universidade Federal do Rio de Janeiro\\
  Instituto de Matem\'atica \\
  21941-909 Rio de Janeiro, RJ \\ Brasil }
\email{ilir@im.ufrj.br}

\title[Frattini-injective pro-$p$ groups] {Frattini-injectivity and Maximal pro-$p$ Galois groups}

\author{Slobodan Tanushevski} \address{Universidade Federal Fluminense\\
  Instituto de Matem\'atica e Estat\'istica\\
  24210-201  Nit\'eroi, RJ \\ Brasil }

\email{stanushevski@id.uff.br}

\begin{abstract} 
We call a pro-$p$ group $G$ Frattini-injective if distinct finitely generated subgroups of $G$ have distinct Frattinis.
This paper is an initial effort toward a systematic study of Frattini-injective pro-$p$ groups (and several other related concepts). 
Most notably, we classify the $p$-adic analytic and the solvable Frattini-injective pro-$p$ groups, and 
we describe the lattice of normal abelian subgroups of a Frattini-injective pro-$p$ group. 

We prove that every maximal pro-$p$ Galois group of a field that contains a primitive $p$th root of unity (and also contains $\sqrt{-1}$ if $p=2$) is Frattini-injective.
In addition, we show that many substantial results on maximal pro-$p$ Galois groups are in fact consequences of Frattini-injectivity.
For instance, a $p$-adic analytic or solvable pro-$p$ group is Frattini-injective if and only if it can be realized as a  
maximal pro-$p$ Galois group of a field that contains a primitive $p$th root of unity (and also contains $\sqrt{-1}$ if $p=2$);
and every Frattini-injective pro-$p$ group contains a unique maximal abelian normal subgroup.

\end{abstract}

\let\thefootnote\relax\footnotetext{\textit{Mathematics Subject Classification (2020): }
Primary  20E18, 12F10; Secondary 12G05, 22E20.}

%\subjclass[2020]{Primary  20E18, 12F10; Secondary 12G05, 22E20}

\maketitle

\section{Introduction}

Throughout, $p$ stands for a prime number.
Given a pro-$p$ group $G$, we denote by $\Phi(G)$ the Frattini subgroup of $G$.

\begin{definition}
We say that a pro-$p$ group $G$ is \emph{Frattini-injective} if the function ${H \mapsto \Phi(H)}$,  from the set of finitely generated subgroups of $G$ into itself, is injective. 
\end{definition}

This paper is an initial effort toward a systematic study of Frattini-injective pro-$p$ groups  (and several other related concepts).
Straightforward examples of Frattini-injective pro-$p$ groups are provided by the free abelian pro-$p$ groups. 
In fact, they are the only Frattini-injective abelian pro-$p$ groups,
since all Frattini-injective pro-$p$ groups are torsion-free or, better yet, they all have the unique extraction of roots property (see Corollary~\ref{virtually-abelian}).

Every subgroup of a Frattini-injective pro-$p$ group is also Frattini-injective. 
Furthermore, if $U$ is an open subgroup of a Frattini-injective pro-$p$ group $G$, then $d(U) \geq d(G)$ (see Proposition~\ref{monotone}).
This is already an indication that Frattini-injectivity is a quite restrictive condition. Nonetheless, as we shall soon see, 
many significant pro-$p$ groups are indeed Frattini-injective. 

\medskip

Our first substantial result is 

\begin{thm}\label{main}
Let $G$ be a $p$-adic analytic pro-$p$ group of dimension $d \geq 1$. Then, $G$ is Frattini-injective  if and only if it is isomorphic to one of the following groups:
 \begin{enumerate}
  \item the abelian group $\Z_p^d$;
  \item  the metabelian group $\langle x \rangle \ltimes \Z_p^{d-1}$, where $\langle x \rangle \cong \Z_p$
 and $x$ acts on $\Z_p^{d-1}$ as scalar multiplication by $\lambda$, with $\lambda = 1+p^s$ for some $s \geq 1$ if $p>2$,
 and $\lambda = 1 + 2^s$ for some $s \geq 2$ if $p=2$.
 \end{enumerate}
\end{thm}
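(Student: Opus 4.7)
My approach handles the two implications of the if-and-only-if separately.

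\medskip

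\textbf{Sufficiency.} For $G = \Z_p^d$, every finitely generated closed subgroup $H$ is isomorphic to $\Z_p^k$ for some $k \leq d$ and satisfies $\Phi(H) = pH$; since multiplication by $p$ is injective on closed subgroups of a torsion-free abelian pro-$p$ group, $H \mapsto \Phi(H)$ is injective. For the metabelian group $G = \langle x\rangle \ltimes N$ with $N = \Z_p^{d-1}$ and $x$ acting as scalar multiplication by $\lambda = 1 + p^s$, I would first show that every finitely generated closed subgroup $H$ not contained in $N$ has the form $\langle y \rangle \ltimes A$ with $y = x^{p^k} b$ (for some $k \geq 0$, $b \in N$) and $A = H \cap N$ a closed $\Z_p$-submodule of $N$ on which $y$ acts by $\lambda^{p^k}$. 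A direct calculation using the commutator identity $[y, a] = (1 - \lambda^{p^k}) a$ together with the known $p$-adic valuation $v_p(\lambda^{p^k} - 1) = s + k$ yields $\Phi(H) = \langle y^p \rangle \cdot pA$. Since $\Phi(H) \cap N = pA$ (as $y^m \in N$ forces $m = 0$), one recovers $A$ from $\Phi(H)$ as the unique closed subgroup of $N$ whose image under multiplication by $p$ is $\Phi(H) \cap N$; the cyclic quotient $\Phi(H)/pA$ then determines $\langle y \rangle \cdot A = H$, yielding Frattini-injectivity.

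\medskip

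\textbf{Necessity.} Let $G$ be a Frattini-injective $p$-adic analytic pro-$p$ group of dimension $d$. By Corollary~\ref{virtually-abelian}, $G$ is torsion-free, so it contains an open uniform subgroup. I would reduce to the uniform case by exploiting Proposition~\ref{monotone}: the monotonicity $d(U) \geq d(G)$ together with the structure theory of $p$-adic analytic pro-$p$ groups forces the lower $p$-series of $G$ to have constant sectional rank $d$. Invoking the Lazard correspondence, $G$ is then encoded by a powerful $\Z_p$-Lie lattice $L$ of rank $d$ in which closed subgroups of $G$ match sub-Lie-lattices of $L$, and the Frattini operation becomes $L_H \mapsto pL_H + [L_H, L_H]$. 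If $L$ is abelian, then $G \cong \Z_p^d$. If $L$ is non-abelian, I would prove that $L$ is necessarily a one-dimensional extension of an abelian ideal: letting $M$ be the maximal abelian ideal of $L$, one establishes $\rank(L/M) = 1$ and that any lift $x$ of a generator of $L/M$ satisfies $\ad_x = \mu \cdot \id_M$ for some $\mu \in \Z_p$ with $v_p(\mu) = s \geq 1$ (respectively $s \geq 2$ if $p = 2$, the latter bound being dictated by the powerful/torsion-free condition). Exponentiating recovers the semidirect product in~(2).

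\medskip

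\textbf{Main obstacle.} The crux of the necessity argument is showing that $\ad_x$ acts as a scalar on $M$. Suppose, toward a contradiction, that $\ad_x$ had two distinct eigenvalues $\mu_1 \neq \mu_2$ on $M \otimes \Q_p$. Choosing $v_i$ in the $\mu_i$-eigenlattice, I would construct two distinct finitely generated subgroups such as $\langle y, v_1 \rangle$ and $\langle y, v_1 + v_2 \rangle$ (for a suitable power $y = x^{p^k}$) whose Frattinis absorb the difference via the contribution of $(\mu_i - 1)$-commutators, yielding $\Phi(H_1) = \Phi(H_2)$ and contradicting Frattini-injectivity. Making this mixing argument rigorous---passing carefully between the group and its Lie lattice, and handling generalized eigenspaces as well as the $p=2$ pathology where the $(yv)^2$ power map introduces additional corrections---is where the bulk of the technical work lies. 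Ruling out non-scalar actions is what drives the classification to the clean form in~(2).
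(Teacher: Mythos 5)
Your \textbf{sufficiency} argument is correct in outline and takes a genuinely different route from the paper, which deduces injectivity from hereditary uniformity (Proposition~\ref{hereditarily}: $\Phi(U)=U^p$ for uniform $U$ plus unique extraction of roots). Your direct computation $\Phi(H)=\langle y^p\rangle\, pA$ and the recovery of $A$ from $\Phi(H)\cap N=pA$ does work; to close it fully you should note that if $\Phi(H_1)=\Phi(H_2)$ then $y_2=y_1c$ with $c\in N$ and $c^{\mu}\in pA$, where $\mu=1+\lambda^{p^k}+\cdots+\lambda^{p^k(p-1)}$ has $p$-adic valuation exactly $1$ (this is where $s\geq 2$ matters for $p=2$), so $c\in A$; and you should observe that subgroups contained in $N$ cannot share a Frattini with subgroups not contained in $N$, since the latter have Frattinis with nontrivial image in $G/N$.

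The \textbf{necessity} half, however, has a genuine gap at its foundation. The step ``invoking the Lazard correspondence, $G$ is encoded by a powerful $\Z_p$-Lie lattice $L$ in which closed subgroups of $G$ match sub-Lie-lattices'' is not available: the subgroup--subalgebra correspondence holds for saturable (e.g.\ uniform) pro-$p$ groups, and a torsion-free $p$-adic analytic pro-$p$ group is guaranteed saturable only when $\dim(G)<p$ (\cite[Theorem~E]{GoKl09}). Proving that $G$ itself --- rather than merely an open subgroup --- is uniform of the listed shape is exactly the hard content of the theorem: the paper obtains an open hereditarily uniform subgroup $U$ from Proposition~\ref{monotone} together with the Klopsch--Snopce classification (\cite[Corollary~2.4]{KlSn11}), and then spends Lemmas~\ref{saturable} through~\ref{hereditarily 2 dim} removing the adverb ``virtually'' by induction on $|G:U|$. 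The critical dimensions $d\geq p$ (Lemmas~\ref{dimension p-1} and~\ref{dimension p1}) are settled by delicate explicit computations exhibiting two distinct subgroups with equal Frattinis, precisely because Lie-theoretic tools fail there; your plan assumes the global Lie correspondence that these lemmas are designed to circumvent.

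A second concrete gap sits in your ``main obstacle'' paragraph: the mixing argument contradicts Frattini-injectivity only when $\ad_x$ has two \emph{distinct} eigenvalues on $M\otimes\Q_p$, so it says nothing about non-scalar actions with a single eigenvalue, i.e.\ nontrivial Jordan structure --- for instance the Heisenberg lattice, where $\ad_x$ on the maximal abelian ideal is nilpotent and nonzero, has all eigenvalues equal to $0$. Such groups must be excluded by a different mechanism (in the paper, the constant-generating-number property: Heisenberg-type groups have open subgroups of strictly smaller generating number, violating Proposition~\ref{monotone}). Likewise, the exceptional $p=2$ actions by $-(1+2^s)$ and by inversion, which do occur in torsion-free $2$-adic analytic groups and which you defer as a ``pathology,'' require the explicit exclusion of Lemma~\ref{hereditarily 2 dim}. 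As written, the necessity direction is a plausible program rather than a proof, and its central reduction does not go through in the stated generality.
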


It turns out that the Frattini-injective solvable pro-$p$ groups are also quite scarce. 

\begin{thm}
\label{solvable}
Let $G$ be a solvable pro-$p$ group. Then, $G$ is Frattini-injective if and only if  it is free abelian or isomorphic
to a semidirect product $\langle x \rangle \ltimes A$, where $\langle x \rangle \cong \Z_p$, $A$ is a free abelian pro-$p$ group and 
$x$ acts on $A$ as scalar multiplication by $1+p^s$, with $s \geq 1$ if $p$ is odd, and $s \geq 2$ if $p=2$.
\end{thm}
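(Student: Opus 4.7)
My plan is to prove the two implications separately, using Theorem \ref{main} (the $p$-adic analytic classification) as the main engine.

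For sufficiency, the free abelian case is Frattini-injective by the remark following the definition. For $G=\langle x\rangle\ltimes A$ with $x$ acting as scalar multiplication by $\lambda=1+p^s$, I will show every finitely generated subgroup $H$ of $G$ is contained in a $p$-adic analytic subgroup $\tilde H\le G$ of the same form as in Theorem \ref{main}(2). Indeed, if the image of $H$ in $G/A\cong\Z_p$ is $p^m\Z_p$ for some $m\ge 0$, then there exists $z\in H$ with $z=x^{p^m}b$ for some $b\in A$; since $x$ (hence $x^{p^m}$) acts on $A$ as a scalar, the closed subgroup $A_0\le A$ generated by $b$ together with the $A$-components of a finite generating set of $H$ is automatically $z$-invariant and free abelian of finite rank. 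Thus $H\subseteq\tilde H:=\overline{\langle z\rangle}\ltimes A_0$, which has the form in Theorem \ref{main}(2) and is therefore Frattini-injective. Given finitely generated $H_1,H_2$ with $\Phi(H_1)=\Phi(H_2)$, embedding both in a common such $\tilde H$ forces $H_1=H_2$.

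For necessity, let $G$ be solvable Frattini-injective with unique maximal abelian normal subgroup $A$ (from the abstract), necessarily free abelian. Assume $G\ne A$. The crux is the following claim, which I regard as the main obstacle:

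\textbf{Claim.} \emph{Every finitely generated subgroup of $G$ is $p$-adic analytic.}

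Granting it, Theorem \ref{main} classifies every finitely generated subgroup of $G$. I then conclude the structure of $G$ in four steps. (i) For each $g\in G$ and $a\in A$, Theorem \ref{main} applied to $\overline{\langle g,a\rangle}$ gives $gag^{-1}=\lambda_{g,a}\,a$ with $\lambda_{g,a}\in 1+p\Z_p$; $\Z_p$-linearity in $a$ forces $\lambda_{g,a}$ to depend only on $g$, yielding a continuous homomorphism $\lambda\colon G\to 1+p\Z_p\cong\Z_p$ with kernel $N:=\Cen_G(A)$; hence $G/N$ is procyclic. (ii) To prove $N=A$: $A\le\Zen(N)$ and $\Zen(N)\trianglelefteq G$ is abelian, giving $\Zen(N)=A$; if $N\ne A$, a non-abelian 2-generator subgroup of $N$ is by Theorem \ref{main} of the form $\Z_p\ltimes\Z_p$ with nontrivial scalar action on its abelian factor $B$, and since $N$ centralizes $A$, one gets $B\cap A=0$; a structural argument using the uniqueness of the maximal abelian normal subgroup in $G$ then produces a contradiction. (iii) Thus $G/A=G/N$ is procyclic; a finite $G/A$ would make $G$ virtually abelian hence abelian by Corollary \ref{virtually-abelian}, so $G/A\cong\Z_p$; lifting a generator to $x\in G$ with $\overline{\langle x\rangle}\cap A=0$ yields $G=\overline{\langle x\rangle}\ltimes A$. (iv) Finally, the homomorphism $\lambda$ identifies the action of $x$ on $A$, and Theorem \ref{main} applied to $\overline{\langle x,a\rangle}$ for any $a\in A$ with $xa\ne ax$ forces $\lambda_x=1+p^s$ with $s\ge 1$ (and $s\ge 2$ if $p=2$).

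The main obstacle is the claim that every finitely generated solvable Frattini-injective pro-$p$ group is $p$-adic analytic. I expect it to be proved by induction on the derived length, with the metabelian case as the crux: for $G$ finitely generated metabelian Frattini-injective, one analyzes $G'=[G,G]$ as a module over $\Z_p[[G/G']]$ and uses Proposition \ref{monotone} ($d(U)\ge d(G)$ for open $U$) together with Frattini-injectivity of open subgroups to force $G'$ to have finite $\Z_p$-rank, making $G$ $p$-adic analytic. The reduction from higher derived length to the metabelian case should exploit that $G^{(n-1)}\le A$ together with the uniqueness of the maximal abelian normal subgroup.
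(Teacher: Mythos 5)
Your sufficiency argument is correct and takes a slightly different route from the paper: the paper realizes $\langle x \rangle \ltimes A$ as an inverse limit of the analytic groups $\langle x \rangle \ltimes A_i$ over the finitely generated direct factors $A_i$ of $A$ and invokes preservation of Frattini-injectivity under inverse limits (Proposition~\ref{inverse limits}), whereas you embed any finitely generated subgroup into an analytic subgroup $\overline{\langle z\rangle}\ltimes A_0$ of the shape in Theorem~\ref{main}(2). Your embedding works (closed subgroups of $A$ are automatically invariant under scalar action), modulo the small normalization that $z=x^{p^m}b$ acts by $(1+p^s)^{p^m}=1+p^{s+m}u$ with $u$ a unit, so $z$ must be replaced by a suitable $\Z_p^\times$-power to get the scalar exactly $1+p^{s+m}$; this is harmless for $p$ odd, and for $p=2$ because $s+m\geq 2$.

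The necessity direction, however, has a genuine gap: your Claim (every finitely generated subgroup is $p$-adic analytic) is true, but it is precisely the technical heart of the paper (Lemma~\ref{semidirect product finite} together with Lemma~\ref{metabelian}), and your proposed mechanism for it does not work. The critical case is a finitely generated group $\langle x \rangle \ltimes A$ with $A$ abelian but infinitely generated, i.e.\ $A$ a cyclic $\Z_p[\![y]\!]$-module with annihilator ideal $I$; the case that must be excluded is $I=(0)$, i.e.\ $G=\langle x\rangle\ltimes \Z_p[\![y]\!]$. Proposition~\ref{monotone} cannot detect this group: its finitely generated subgroups are perfectly well-behaved and its open subgroups are all infinitely generated, so no rank-monotonicity violation occurs. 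What the paper actually does is \emph{construct} a failure of Frattini-injectivity there, exhibiting the distinct subgroups $H=\langle x, p^2y^0, py, y^2\rangle$ and $K=\langle x, p^2y^0, y^2\rangle$ with $\Phi(H)=\Phi(K)=\langle x^p\rangle[p^2\Z_p[\![y]\!]+(y^2)]$; it then needs a division argument for $I\neq(0)$, and, to pass from one module generator to several, the \emph{strong Frattini-resistance} of analytic Frattini-injective groups (Proposition~\ref{p-adic analytic Frattini-resistant}) to show $[x,a_i]\in A^p=\Phi(A)$ so that $a_1,\dots,a_d$ already generate $A$ topologically. None of these ideas appears in your sketch, and "analyze $G'$ as a $\Z_p[\![G/G']]$-module and force finite rank" is exactly the statement to be proved, not a proof.

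A second, structural problem: you twice invoke the \emph{uniqueness} of the maximal abelian normal subgroup (in step (ii) and in the reduction across derived length), citing the abstract. That uniqueness is Theorem~\ref{max-normal}, whose proof in the paper uses Theorem~\ref{solvable}; relying on it here is circular. Fortunately, mere maximality (Zorn) suffices for $Z(N)=A$, and your step (ii) can be completed without uniqueness: for $w\in B$, $a\in A$ nontrivial, $u'$ normalizes $\overline{\langle w,a\rangle}\cong\Z_p^2$, scaling $w$ by $1+p^s\neq 1$ while fixing $a$, which contradicts the single-scalar action forced by Theorem~\ref{main} on $\overline{\langle u'\rangle}\,\overline{\langle w,a\rangle}$ (an argument in the spirit of the paper's proof that $G/A\cong\Z_p$ in Lemma~\ref{metabelian}). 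Note also that the paper sidesteps your global homomorphism $\lambda:G\to 1+p\Z_p$ by first settling the metabelian case completely and then ruling out derived length $3$ with a short argument: two scalar actions on $\langle a\rangle\cong\Z_p$ land in the abelian group $\mathrm{Aut}(\Z_p)$, so no commutator can act by a nontrivial scalar.
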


Next, we give a complete description of the lattice of  abelian normal subgroups of a Frattini-injective pro-$p$ group. 

\begin{thm}
\label{max-normal}
Let $G$ be a Frattini-injective pro-$p$ group. Then, $G$ has a unique maximal normal abelian subgroup $N$.
Moreover, the following assertions hold:
\begin{enumerate}[(i)]
\item $N$ is isolated in $G$.
\item Every subgroup of $N$ is normal in $G$.
\item If $Z(G) \neq 1$, then $N=Z(G)$.
\item If $Z(G)=1$ but $N \neq 1$, then $G \cong \Z_p \ltimes C_G(N)$ and $Z(C_G(N))=N$.
\end{enumerate}
\end{thm}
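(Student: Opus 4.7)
The overall strategy is to use Theorem~\ref{solvable} as a local structure result for two-generated metabelian subgroups of $G$, and to promote it to a scalar action of $G$ on any normal abelian subgroup. To produce $N$, let $A$ and $B$ be closed normal abelian subgroups of $G$ and set $H := \overline{AB}$; since $A, B$ are normal and abelian, $[H,H] \subseteq [A,B] \subseteq A \cap B$, so $H$ is metabelian and Frattini-injective. By Theorem~\ref{solvable}, either $H$ is free abelian---in which case $H$ is itself normal abelian and contains $A \cup B$---or $H \cong \langle x \rangle \ltimes C$ with $C$ free abelian and $x$ acting on $C$ by a scalar $\lambda \neq 1$. In the second case I would verify that every normal abelian subgroup of $H$ lies inside $C$: for $d = x^k c$ with $k \neq 0$ and $y \in C$, a direct computation gives $[y,d] = (1-\lambda^k)\,y$, so any normal subgroup of $H$ containing $d$ must contain the nonzero submodule $(1-\lambda^k)\,C$; but $d$ acts on this submodule as the non-identity scalar $\lambda^k$, so such a subgroup cannot be abelian. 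This forces $A, B \subseteq C$, contradicting $H \supsetneq C$. Hence $\overline{AB}$ is always abelian, so all closed normal abelian subgroups pairwise commute elementwise, and the closed subgroup generated by their union is the unique maximal closed normal abelian subgroup $N$.

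Applying the same dichotomy to $M_g := \overline{\langle g, N \rangle}$ for an arbitrary $g \in G$ forces $N \subseteq C$ in the non-abelian case; writing $g = x^k c_0$ then shows $g$ acts on $N$ as the scalar $\lambda^k$. Thus conjugation by each $g \in G$ acts on $N$ by some $\chi(g) \in 1 + p\Z_p$ (and in fact $\chi(g) \in 1 + 4\Z_2$ when $p = 2$, using that $s \geq 2$ there), yielding a continuous character $\chi : G \to 1 + p\Z_p$ with kernel $C_G(N)$ and torsion-free image. Assertion (ii) is now immediate, since every closed subgroup of $N \cong \Z_p^r$ is a $\Z_p$-submodule and is therefore preserved by every scalar. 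For (iii), note that $Z(G) \subseteq N$; if $\chi$ were nontrivial, then for any $g$ with $\chi(g) \neq 1$ the equation $(\chi(g)-1)\,n = 0$ together with the torsion-freeness of $N$ would force every $n \in Z(G) \cap N$ to be trivial, contradicting $Z(G) \neq 1$. Hence $\chi$ is trivial, $G = C_G(N)$, and $N = Z(G)$. For (iv), if $Z(G) = 1$ and $N \neq 1$, then $\chi$ is nontrivial, so its image is a nontrivial closed torsion-free subgroup of $\Z_p$, hence isomorphic to $\Z_p$; the extension $1 \to C_G(N) \to G \to \Z_p \to 1$ splits by projectivity of $\Z_p$ as a pro-$p$ group, giving $G \cong \Z_p \ltimes C_G(N)$. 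The equality $Z(C_G(N)) = N$ follows because $Z(C_G(N))$ is characteristic in the normal subgroup $C_G(N) \trianglelefteq G$, hence normal and abelian in $G$, hence contained in $N$; the reverse inclusion is clear.

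Finally, for (i), let $g \in G$ satisfy $g^n \in N$ for some $n \in \Z_p \setminus \{0\}$. Then $\chi(g)^n = \chi(g^n) = 1$, and torsion-freeness of the image of $\chi$ yields $\chi(g) = 1$, so $g \in C_G(N)$. By (iii) or (iv) we have $g^n \in N = Z(C_G(N))$, so for every $h \in C_G(N)$ one has $(hgh^{-1})^n = h g^n h^{-1} = g^n$; unique extraction of roots in the Frattini-injective subgroup $C_G(N)$ (Corollary~\ref{virtually-abelian}) then gives $hgh^{-1} = g$, so $g \in Z(C_G(N)) = N$. The main obstacle will be the structural analysis of $\langle x \rangle \ltimes C$ showing that its normal abelian subgroups all sit inside $C$: this is the key input that both makes $\overline{AB}$ abelian for normal abelian $A, B$ and produces the scalar character $\chi$. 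Once it is in place, the remaining assertions follow essentially formally from Theorem~\ref{solvable} and unique extraction of roots.
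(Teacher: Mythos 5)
Your proof is correct, and it runs on the same engine as the paper's --- Theorem~\ref{solvable} applied to metabelian subgroups of the form (normal abelian)$\cdot$(something) --- but the architecture differs at two joints, both soundly. For uniqueness, the paper first proves assertion (i) directly (the isolator of a maximal normal abelian subgroup $N$ is normal, and abelian by Corollary~\ref{virtually-abelian}(iii), so $N$ is isolated), and then, given a second maximal normal abelian subgroup $M$, decomposes $NM=\langle x\rangle\ltimes A$, arranges the projection of $N$ to $\langle x\rangle$ to be surjective, and uses $[a,n]=a^{p^s}\in N$ together with isolatedness to force $M\leq N$. You instead isolate the structural lemma that every normal abelian subgroup of $\langle x\rangle\ltimes C$ (with nontrivial scalar action $\lambda=1+p^s$) lies inside $C$ --- via $[y,d]=y^{\lambda^k-1}$ for $d=x^kc$, $k\neq 0$, which plants the nonzero submodule $C^{\lambda^k-1}$ inside the putative abelian normal subgroup containing $d$, on which $d$ acts by the nonidentity scalar $\lambda^k$, contradicting torsion-freeness. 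This single lemma buys you both that $\overline{AB}$ is abelian for normal abelian $A,B$ (hence existence and uniqueness of $N$ without Zorn or prior isolatedness) and, applied to $\langle g\rangle N$, the scalar character $\chi\colon G\to 1+p\Z_p$ with $\ker\chi=C_G(N)$ (and image in $1+4\Z_2$ when $p=2$, which is where $s\geq 2$ is genuinely needed for torsion-freeness of the image); this $\chi$ essentially reconstructs the $\theta$ of Proposition~\ref{theta-center} ahead of time. Your lemma also fills in a point the paper glosses, namely why $N$ sits inside the free abelian part of $\langle x,N\rangle$ in its proof of (ii)--(iii). Finally, you prove (i) last, from torsion-freeness of $\mathrm{Im}(\chi)$ plus $(hgh^{-1})^n=g^n$ and unique extraction of roots (Corollary~\ref{virtually-abelian}(iv)), whereas the paper gets it at the outset from the isolator argument; your route is longer but valid, including the degenerate cases ($N=1$, or $Z(G)\neq 1$ where $C_G(N)=G$ and $Z(C_G(N))=N$ still holds). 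The only points worth writing out in a final version are the continuity of $\chi$ (immediate: $\chi(g)$ is determined by $g\mapsto n_0^g$ for a fixed $1\neq n_0\in N$, and $\alpha\mapsto n_0^\alpha$ is a homeomorphism onto $\overline{\langle n_0\rangle}$) and the closedness/metabelianness of $AB$ and $\langle g\rangle N$ (products of closed subgroups with compact normal subgroups, with $[AB,AB]\leq A\cap B$); neither is a gap.
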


There are two obvious ways of sharpening the Frattini-injectivity condition. Instead of confining to finitely generated subgroups, one can take all subgroups into consideration: 
We call a pro-$p$ group $G$ \emph{strongly Frattini-injective} if the function ${H \mapsto \Phi(H)}$,  from the set of all subgroups of $G$ into itself, is injective.
Alternatively, we may require the map ${H \mapsto \Phi(H)}$ to be an embedding of posets:   
A pro-$p$ group $G$ is defined to be \emph{strongly Frattini-resistant} (\emph{Frattini-resistant}) if for all (finitely generated) subgroups $H$ and $K$ of $G$,
\[ H \leq K \iff \Phi(H) \leq \Phi(K). \]
(To understand our reason for the choice of terms,  see Section~\ref{Hierarchical triples}, where Frattini-resistance and another related concept, commutator-resistance, are introduced in a unified manner.)   

Every (strongly) Frattini-resistant pro-$p$ group is (strongly) Frattini-injective.
All solvable and all $p$-adic analytic Frattini-injective pro-$p$ groups are strongly Frattini-resistant.
Additional examples of strongly Frattini-resistant groups are provided by the free pro-$p$ groups (Theorem~\ref{free}).

If $G$ is a Demushkin group, then $G^{ab}\cong \mathbb{Z}_p^d$ or $G^{ab}\cong \mathbb{Z}/p^e\mathbb{Z} \times \mathbb{Z}_p^{d-1}$ for some $e \geq 1$; 
set $q(G):=p^e$ in the latter and $q(G):=0$ in the former case. 

\begin{thm}
\label{Dem-Frattini-resistant-free}
Let $G$ be a Demushkin pro-$p$ group. Then, the following assertions hold: 
\begin{enumerate}[(i)]
\item If $q(G) \neq p$, or $q(G)=p$ and $p$ is odd, then $G$ is strongly Frattini-resistant.
\item If $q(G)=2$ and $d(G)>2$, then $G$ is Frattini-injective, but not Frattini-resistant.  
\item If $q(G)=2$ and $d(G)=2$, then $G$ is not Frattini-injective.
\end{enumerate}
\end{thm}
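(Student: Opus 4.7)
\emph{Plan.} The proof splits into the three cases, with parts (i) and (ii) sharing common structural input.

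Part (iii) follows directly from Theorem~\ref{solvable}. The unique Demushkin pro-$2$ group with $d=2$, $q=2$ has presentation $G = \langle x, y \mid x^2[x,y]\rangle$, whose defining relation rewrites as $y^{-1} x y = x^{-1}$; hence $G \cong \Z_2 \rtimes \Z_2$ with generator acting by $-1$. Since $-1 \equiv 3 \pmod 4$ while $1+2^s \equiv 1 \pmod 4$ for every $s \geq 2$, this semidirect product does not appear in the list in Theorem~\ref{solvable}, so $G$ is not Frattini-injective.

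For parts (i) and (ii), the common tools are Labute's theorem that every finitely generated subgroup of infinite index in a Demushkin pro-$p$ group is free pro-$p$, the fact that open subgroups of Demushkin groups are again Demushkin (with ranks computed via the Euler characteristic formula), and Theorem~\ref{free}. Together these give a trichotomy on the finitely generated subgroups of $G$: procyclic, free pro-$p$ of rank at least two, or open Demushkin. To prove Frattini-injectivity in both (i) and (ii), given $\Phi(H) = \Phi(K)$ for finitely generated $H, K \leq G$, I would split by the type of $\overline{\langle H, K \rangle}$: when this join has infinite index in $G$ it is free pro-$p$ by Labute, and then Theorem~\ref{free} yields $H = K$; when the join is open in $G$, it is itself Demushkin still satisfying the hypothesis of the relevant part, and we induct.

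To upgrade Frattini-injectivity to strong Frattini-resistance in (i), the hypothesis that $q(G) \neq p$ or $p$ is odd ensures that no cyclic subgroup $\langle h \rangle$ of $G$ satisfies $h^p \in [G,G]$ without $h$ itself being ``visible'' in any subgroup whose Frattini contains $h^p$. The same case analysis as above then yields $\Phi(H) \leq \Phi(K) \Rightarrow H \leq K$, the inclusion-preserving strengthening.

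Conversely, to exhibit the failure of Frattini-resistance in (ii), the hypothesis $q(G) = 2$ provides the distinguished generator $x_1$ with $x_1^2 \in [G,G] \subseteq \Phi(G)$. I would construct a finitely generated $K \leq G$ with $x_1 \notin K$ and $x_1^2 \in \Phi(K)$, so that $\Phi(\langle x_1 \rangle) \leq \Phi(K)$ while $\langle x_1 \rangle \not\leq K$. The main obstacle will be verifying the strict inclusion $\Phi(\langle x_1 \rangle) \subsetneq \Phi(K)$ --- equality would contradict the Frattini-injectivity just established --- which should follow once $K$ is chosen with $d(K) \geq 2$, so that $\Phi(K)$ is strictly larger than the procyclic $\overline{\langle x_1^2 \rangle}$.
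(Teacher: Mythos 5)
Your proposal has genuine gaps in all three parts. For (i)--(ii), the join-based argument only disposes of the easy half: when $\langle H,K\rangle$ has infinite index it is free and Theorem~\ref{free} applies, as you say. But when $H$ and $K$ are open (or their join is open while they are not), the phrase ``we induct'' has no content --- there is no induction parameter, since the join may well be all of $G$, and open subgroups of a Demushkin group have \emph{larger} rank. This open case is exactly where the paper works hardest: it first reduces, via Lemma~\ref{normal-Frattini}, to a pair $M,N$ normal in $L=\langle M,N\rangle$ with $\Phi(M)=\Phi(N)$, then proves the counting bound $\log_2(|\Phi(L):\Phi(M)|)\leq \log_2(|M:M\cap N|)\log_2(|N:M\cap N|)$ (Lemma~\ref{log}, via a surjection from a tensor product of elementary abelian quotients), and combines it with the Demushkin rank formula $d(H)=|L:H|(d(L)-2)+2$ to force $2^n\leq n^2-n+1$ with $n=\log_2(|L:H|)$, whence $n=0$. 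None of this is in your plan. Likewise, your upgrade to strong Frattini-resistance in (i) is hand-waving, and the mechanism you invoke is in fact false when $q(G)=p$: the defining relation gives $x_1^p\in[G,G]$, so $G^{ab}$ has a direct $\Z/p\Z$ factor and $G$ is \emph{not} commutator-resistant. The paper's route is different in the two subcases: for $q\neq p$ every subgroup is either open Demushkin with $q\neq p$ (Labute) or free, and Corollary~\ref{CR - criteria} gives strong commutator-resistance subgroup-by-subgroup; for $q=p$ odd it constructs, for each open subgroup $U$, a Frattini-cover onto the hereditarily uniform group $\Z_p\ltimes\Z_p^{d-1}$ (with scalar $1-q$) and invokes Proposition~\ref{F-R criterion} together with Proposition~\ref{p-adic analytic Frattini-resistant}.

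For (iii), your uniqueness claim is wrong: the Demushkin pro-$2$ groups with $d=2$, $q=2$ form the infinite family $\langle x,y\mid x^{2+\alpha}[x,y]\rangle$ with $\alpha\in 4\Z_2$, i.e.\ $\Z_2\ltimes\Z_2$ with $y^{-1}xy=x^{-(1+\alpha)}$, and these fall into infinitely many isomorphism classes (inversion, and scalars $-(1+2^s)$, $s\geq 2$; compare Lemma~\ref{hereditarily 2 dim}). Your argument as written covers only $\alpha=0$. It is repairable: for every $\alpha$ the acting scalar is $\equiv 3\ (\mathrm{mod}\ 4)$, and one rules out an isomorphism with a group from Theorem~\ref{solvable} by comparing abelianization torsion ($\Z/2\Z$ versus $\Z/2^s\Z$, $s\geq 2$) rather than by naively comparing scalars; the paper instead appeals to Theorem~\ref{main} directly (the group is $2$-adic analytic but not powerful). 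Two further points on your final paragraph: the worry about strict inclusion $\Phi(\langle x_1\rangle)\subsetneq\Phi(K)$ is a red herring --- by Proposition~\ref{embedding}, failure of Frattini-resistance needs only $\Phi(\langle x_1\rangle)\leq\Phi(K)$ together with $x_1\notin K$, and no strictness must be ``verified''; and your construction with $x_1$ itself only works when $d(G)$ is odd (where the relation gives $x_1^2\in\Phi(\langle x_2,\dots,x_d\rangle)$), whereas for $d(G)$ even the paper must pass to the element $x_1x_2$ and a maximal subgroup containing $x_2,\dots,x_d$, using $\alpha\in 4\Z_2$ to get $x_1^{\alpha}\in\Phi(M)$.
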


The absolute Galois group of a field $k$ is the profinite group $G_k=: \textrm{Gal}(k_s/k)$, where $ k_s$ is a separable closure of $k$. 
The maximal pro-$p$ Galois group of $k$, denoted by $G_k(p)$, is the maximal pro-$p$ quotient of $G_k$.  Equivalently, $G_k(p) =  \textrm{Gal}(k(p)/k)$, where  $k(p)$ is 
the compositum of all finite Galois $p$-extensions of $k$ inside $k_s$. Delineating absolute (maximal pro-$p$) Galois groups of fields within the category of profinite (pro-$p$) groups is one of the central problems of Galois theory.

\begin{thm}
 Let $k$ be a field containing a primitive $p$th root of unity.  If $p=2$, in addition, assume that $ \sqrt{-1}  \in k$. Then $G_k(p)$ is strongly Frattini-resistant. 
\end{thm}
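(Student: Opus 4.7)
The plan is to translate the group-theoretic statement into a field-theoretic one via Galois correspondence and Kummer theory, and then to prove the latter by reducing to a contradiction about higher Kummer extensions. For every closed subgroup $H \le G := G_k(p)$, the fixed field $F_H := k(p)^H$ satisfies $F_H(p) = k(p)$ (since $k(p)$ admits no nontrivial pro-$p$ extensions), so $H = G_{F_H}(p)$. The hypothesis passes down: $F_H \supseteq k$ contains $\mu_p$ (and $\sqrt{-1}$ if $p = 2$). Kummer theory then identifies the maximal elementary abelian $p$-extension of $F_H$ inside $k(p)$ as $L_H := F_H(\sqrt[p]{F_H^\times})$, and the Galois correspondence gives $\Phi(H) = \operatorname{Gal}(k(p)/L_H)$. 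Consequently, $\Phi(H) \le \Phi(K) \Leftrightarrow L_K \subseteq L_H$ and $H \le K \Leftrightarrow F_K \subseteq F_H$; strong Frattini-resistance is thereby equivalent to the field-theoretic claim that, for any intermediate fields $E, F$, one has
\[ E(\sqrt[p]{E^\times}) \subseteq F(\sqrt[p]{F^\times}) \implies E \subseteq F. \]

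To prove this claim I would argue by contradiction. Assume $E \not\subseteq F$ and consider the compositum $EF \subseteq F(\sqrt[p]{F^\times})$: the extension $EF/F$ is elementary abelian $p$-Galois, so by Kummer $EF = F(\sqrt[p]{B})$ for some $B \le F^\times$ with $(F^\times)^p \subsetneq B$. Pick $b \in B \setminus (F^\times)^p$; the Kummer Galois action decomposes $EF = \bigoplus_{[a] \in B/(F^\times)^p} F \cdot \sqrt[p]{a}$ into one-dimensional $F$-eigenspaces. Because $EF$ is generated as an $F$-algebra by $E$, one locates an element $e \in E^\times$ whose class modulo $(EF)^{\times p}$ pairs nontrivially with $[b]$---concretely, $e$ has the form $c\sqrt[p]{b}$ up to an element of $(EF)^{\times p}$, for some $c \in F^\times$. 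Then $\sqrt[p]{e} \in E(\sqrt[p]{E^\times}) \subseteq F(\sqrt[p]{F^\times})$, and chasing eigenspaces one extracts (up to adjustment by elements of $F(\sqrt[p]{F^\times})$) a $p^2$-th root of $b$ inside $F(\sqrt[p]{F^\times})$. But $F(\sqrt[p^2]{b})/F$ is a cyclic extension of degree $p^2$---this uses that $G_F(p)$ is torsion-free, itself a consequence of the hypothesis on $k$---so it cannot sit inside an extension of exponent $p$. Contradiction.

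The main obstacle I expect is the extraction step: locating an element $e \in E^\times$ with the prescribed Kummer-theoretic behavior. A naive attempt using only a single $\alpha \in E$ with $\sqrt[p]{\alpha} \in F(\sqrt[p]{F^\times})$ fails---easy examples (e.g.\ $F = \mathbb{Q}(i)$, $\alpha = 1+\sqrt{2}$) show $\sqrt[p]{\alpha} \in F(\sqrt[p]{F^\times})$ with $\alpha \notin F$---so one must genuinely exploit the full containment $\sqrt[p]{E^\times} \subseteq F(\sqrt[p]{F^\times})$, not just isolated instances. The role of the hypothesis $\sqrt{-1} \in k$ when $p = 2$ is twofold: it ensures that Kummer theory applies cleanly at every subfield $F_H$, and it ensures $G_F(p)$ is torsion-free so that the forbidden $p^2$-cyclic extension genuinely cannot be embedded in an elementary abelian extension in the final step.
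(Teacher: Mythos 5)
Your reduction to the field-theoretic statement is correct --- it is precisely the equivalence the paper records as a corollary after Theorem~\ref{p^2 root} --- but your proof of that statement has a genuine gap at what you call the extraction step, and that step is not a peripheral obstacle: it is the entire content of the theorem. Your proposed mechanism conflates the \emph{additive} Kummer eigenspace decomposition $EF=\bigoplus_{[a]\in B/(F^{\times})^{p}}F\cdot\sqrt[p]{a}$ with \emph{multiplicative} Kummer classes in $(EF)^{\times}/((EF)^{\times})^{p}$: the fact that $E$ generates $EF$ as an $F$-algebra only produces elements of $E$ with nonzero eigencomponent at $[b]$, i.e.\ sums $\sum_{[a]}c_{a}\sqrt[p]{a}$, and gives no element $e\in E^{\times}$ lying in $c\sqrt[p]{b}\,((EF)^{\times})^{p}$; nothing in the proposal closes this, as you yourself flag. (Granting such an $e$, your concluding chase does work, since $\zeta_{p^{2}}\in F(\sqrt[p]{F^{\times}})$ and $c\in F^{\times}$.) There is also a repairable error in the last step: $F(\sqrt[p^{2}]{b})/F$ is in general not even Galois, let alone cyclic of degree $p^{2}$, because only $\mu_{p}\subseteq F$ is assumed; and torsion-freeness of $G_{F}(p)$ is the wrong tool --- in this paper torsion-freeness is \emph{deduced} from Frattini-injectivity, so invoking it here courts circularity. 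The correct statement, and the computational core of the paper's proof of Theorem~\ref{Galois-Frattini-resistant}, is: if $\sigma$ fixes $F$ but moves $\sqrt[p]{b}$, then writing $\sigma(\sqrt[p^{2}]{b})=\zeta_{p^{2}}^{s}\sqrt[p^{2}]{b}$ and $\sigma(\zeta_{p^{2}})=\zeta_{p^{2}}^{t}$ with $p\nmid s$ and $t\equiv 1\ (\mathrm{mod}\ p)$, one computes $\sigma^{p}(\sqrt[p^{2}]{b})=\zeta_{p^{2}}^{s(1+t+\cdots+t^{p-1})}\sqrt[p^{2}]{b}$ and checks $p^{2}\nmid s(1+t+\cdots+t^{p-1})$, which holds when $p$ is odd or when $t\equiv 1\ (\mathrm{mod}\ 4)$. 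This is where $\sqrt{-1}\in k$ genuinely enters for $p=2$ --- it pins the cyclotomic action modulo $4$ (for $t\equiv 3$ the computation honestly fails) --- not merely ``clean Kummer theory.''

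What your proposal is missing is the formal reduction that lets the paper bypass extraction altogether. By Corollary~\ref{open-maximal} (via Proposition~\ref{open-resistant} and Proposition~\ref{F-R criterion}), strong Frattini-resistance follows once $(H,M,\Phi(M))$ is hierarchical for every subgroup $H$ and every \emph{maximal} subgroup $M$ of $H$; in field terms one only needs: if $K=F(\sqrt[p]{a})$ is a single degree-$p$ step and $\sigma\in\mathrm{Gal}(k(p)/F)$ moves $\sqrt[p]{a}$, then $\sigma^{p}$ moves some element of $K(\sqrt[p]{K^{\times}})$. In this corank-one situation the witness you were trying to extract is handed over for free: $b:=\sqrt[p^{2}]{a}$ lies in $K(\sqrt[p]{K^{\times}})$ simply because $b^{p}=\sqrt[p]{a}\in K^{\times}$, and the displayed cocycle computation finishes the odd-$p$ case. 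For $p=2$ the paper argues differently again: since $\sqrt{-1}\in k$ means $k$ contains a primitive $4$th root of unity, Theorem~\ref{p^2 root} proves the \emph{stronger} property of strong commutator-resistance by lifting epimorphisms $H\to\Z_p/p\Z_p$ to $\Z_p/p^{2}\Z_p$ through the cyclic degree-$p^{2}$ extension $F(\sqrt[p^{2}]{a})/F$ --- cyclic exactly because $\mu_{p^{2}}\subseteq F$ --- and invoking Proposition~\ref{extension}. To salvage your route, prove the maximal-subgroup case first and then quote the reduction; as written, the attempt fails at its central step.
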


\begin{thm}
\label{absolute Galois}
For any field $k$ and odd prime $p$, every pro-$p$ subgroup of the absolute Galois group $G_k$ is strongly Frattini-resistant.
Moreover, if $ \sqrt{-1}  \in k$, then also every pro-$2$ subgroup of $G_k$ is strongly Frattini-resistant.
\end{thm}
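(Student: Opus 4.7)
The plan is to reduce the statement to the preceding theorem via Galois descent. Given a closed pro-$p$ subgroup $H$ of $G_k$, I would set $L := k_s^H$ to be its fixed field. The Galois correspondence identifies $H$ with $\mathrm{Gal}(k_s/L) = G_L$, and since $H$ is itself pro-$p$, it coincides with its own maximal pro-$p$ quotient, so $H = G_L(p)$. It therefore suffices to verify that $L$ satisfies the hypotheses of the preceding theorem.

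The key step is to establish $\mu_p \subseteq L$ (together with $\sqrt{-1} \in L$ when $p=2$). The cyclotomic extension $L(\mu_p)/L$ is Galois with Galois group embedding in $(\Z/p\Z)^{\times}$, so its degree divides $p-1$. On the other hand, $\mathrm{Gal}(L(\mu_p)/L)$ is a finite quotient of the pro-$p$ group $G_L = H$, hence a $p$-group. For odd $p$, $\gcd(p-1,p) = 1$ forces $[L(\mu_p):L] = 1$, giving $\mu_p \subseteq L$. For $p=2$, the inclusion $k \subseteq L$ immediately transports $\sqrt{-1} \in k$ to $\sqrt{-1} \in L$. The preceding theorem applied to $L$ then yields that $G_L(p) = H$ is strongly Frattini-resistant. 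In the degenerate case $\mathrm{char}(L) = p$, one argues separately using the fact that the absolute Galois group of any field of characteristic $p$ has $p$-cohomological dimension at most $1$, whence $H = G_L$ is free pro-$p$ and strongly Frattini-resistant by Theorem~\ref{free}.

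I do not anticipate any substantial obstacle: the entire content of the statement lies in the preceding theorem (together with Theorem~\ref{free} for the characteristic-$p$ case), while the present argument is a straightforward descent through the Galois correspondence. The one caveat worth flagging is that ``pro-$p$ subgroup'' must be understood as ``closed pro-$p$ subgroup''---the standard convention in the profinite setting---so that the identification $H = \mathrm{Gal}(k_s/L)$ via the Galois correspondence remains valid.
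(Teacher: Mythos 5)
Your proposal is correct and matches the paper's own argument in all essentials: both reduce to Theorem~\ref{Galois-Frattini-resistant} (resp.\ Theorem~\ref{p^2 root} when $p=2$) by identifying the pro-$p$ subgroup with $G_L(p)$ for its fixed field $L$, deduce $\mu_p\subseteq L$ from the coprimality of $[L(\mu_p):L]$ with $p$, and dispose of characteristic $p$ via freeness of pro-$p$ subgroups of $G_k$ together with Theorem~\ref{free}. The only (immaterial) difference is that the paper first passes through a $p$-Sylow subgroup of $G_k$ and uses heredity of strong Frattini-resistance, whereas you apply the Galois correspondence directly to the given closed subgroup $H$.
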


%\begin{proof}
%If $k$ is of characteristic $p$ then $G$ is a free pro-$p$ group, by \cite[Theorem~6.1.4]{NeSchWi08}. Hence, $G$ is strongly Frattini-resistant. If $k$ is of characteristic differenet than $p$, then $k$ contains all $p$-power primitive roots of unity, since 
%$G = G_k(p)$.  Thus $G$ is a Frattini-resistant pro-$p$ group.
%\end{proof}

In what follows, $k$ is a field containing a primitive $p$th root of unity, and also $\sqrt{-1} \in k$ if $p=2$.
In the last few decades, substantial progress has been made in the direction of finding necessary conditions for a pro-$p$ group to be realizable as the maximal pro-$p$ Galois group of some field $k$ 
(cf. for example,  \cite{BeNiJaJo07}, \cite{CheEfMi12}, \cite{Ef97}, \cite{Ef}, \cite{EfMi11}, \cite{EfMa17}, \cite{EfQu19}, \cite{Ko98},  \cite{Ko01},  \cite{MiTa16},   \cite{MiTa17}, \cite{QuWe18},  \cite{Vo11}  and references therein). 
Most notably, it follows from the positive solution of the Bloch-Kato conjecture by Rost and Voevodsky (with a `patch' of Weibel; cf.  \cite{Ro02},  \cite{Vo11} and  \cite{We09}) that every maximal pro-$p$ Galois group $G_k(p)$ is quadratic, i.e.,
the cohomology algebra $H^\bullet(G_k(p),\F_p) = \bigoplus_{n\geq0}H^n(G_k(p),\F_p)$ is generated by elements of degree $1$ and defined by homogeneous relations of degree $2$ 
(see \cite{QuSnVa19}).  
%A pro-$p$ group $G$ is called a \emph{Bloch-Kato pro-$p$ group} if  every closed subgroup $U$ of $G$ is quadratic (see \cite{Qu14}).  
% It is remarkable that we do not know a single example of a Bloch-Kato pro-$p$ group which is not of elementary type. 
Another restriction discovered recently concerns the external cohomological structure of $G_k(p)$, more precisely, for every
$\varphi_1, \varphi_2, \varphi_3 \in H^1(G_K(p),  \mathbb{F}_p)$ the triple Massey product $ \langle \varphi_1, \varphi_2, \varphi_3  \rangle$ is not essential (cf. \cite{EfMa17}, \cite{MiTa16} and  \cite{MiTa17}).

In contrast to the above-mentioned properties of maximal pro-$p$ Galois groups, Frattini-injectivity (and also Frattini-resistance) is a fairly elementary and quite palpable group theoretic condition; 
yet, it seems to be highly restrictive. For instance, within the classes of $p$-adic analytic and solvable pro-$p$ groups, Frattini-injectivity completely characterizes maximal pro-$p$ Galois groups.

\begin{cor}
\label{Galois-analytic}
Let $G$ be a solvable or $p$-adic analytic pro-$p$ group. Then, $G$ is Frattini-injective if and only if it is isomorphic to $G_k(p)$ for some field $k$ that contains a primitive $p$th root of unity, and also contains $\sqrt{-1}$ if $p=2$. 
\end{cor}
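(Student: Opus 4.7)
The ``if'' direction is immediate: every strongly Frattini-resistant pro-$p$ group is Frattini-injective, and by the theorem stated just above, $G_k(p)$ is strongly Frattini-resistant whenever $k$ contains $\zeta_p$ (and $\sqrt{-1}$ if $p=2$).

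For the converse, I would invoke Theorem~\ref{main} (when $G$ is $p$-adic analytic) or Theorem~\ref{solvable} (when $G$ is solvable) to reduce to two cases: either $G \cong \Z_p^d$, or $G \cong \langle x \rangle \ltimes A$ with $\langle x \rangle \cong \Z_p$, $A$ a free abelian pro-$p$ group of rank $d-1 \geq 0$, and $x$ acting on $A$ as scalar multiplication by $1 + p^s$, where $s \geq 1$ if $p$ is odd and $s \geq 2$ if $p=2$. The task thus becomes the explicit realization of each such group as a maximal pro-$p$ Galois group over an admissible base field.

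My plan is to produce both families uniformly via iterated Laurent series over a finite or algebraically closed coefficient field of characteristic different from $p$. In the free abelian case I would set $k := \overline{\F}_\ell((t_1))\cdots((t_d))$ for any prime $\ell \neq p$; the residue field is algebraically closed, so $G_{\bar k}(p) = 1$, and the value group $\Gamma$ satisfies $\dim_{\F_p}(\Gamma/p\Gamma) = d$. In the metabelian case I would fix a prime power $q = \ell^f$ with $\ell \neq p$ and $q \equiv 1 + p^s \pmod{p^{s+1}}$, and take $k := \F_q((t_1))\cdots((t_{d-1}))$; the requirement $\zeta_p \in k$ (together with $\sqrt{-1} \in k$ when $p = 2$, which is ensured by $s \geq 2$) holds at the residue level. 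By well-known results in the Galois theory of Henselian valued fields of residue characteristic prime to $p$, $G_k(p)$ fits into a split short exact sequence
$$1 \longrightarrow \Z_p(1)^{\dim_{\F_p}(\Gamma/p\Gamma)} \longrightarrow G_k(p) \longrightarrow G_{\bar k}(p) \longrightarrow 1,$$
with $G_{\bar k}(p)$ acting on the kernel through the cyclotomic character. In the first case this yields $G_k(p) \cong \Z_p^d$; in the second it yields $G_k(p) \cong \Z_p \ltimes \Z_p^{d-1}$, in which the Frobenius lift acts on $\Z_p^{d-1}$ by multiplication by $q$. Since $q$ is a topological generator of the pro-$p$ group $1 + p^s\Z_p$, replacing this generator by the preimage of $1 + p^s$ produces an isomorphic semidirect product that matches the description given in Theorem~\ref{solvable}.

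The main obstacle is the invocation of the structure theorem for $G_k(p)$ over Henselian valued fields, which is where the substantive Galois-theoretic content resides; I would cite it as a black box rather than redevelop the tame Kummer-theoretic arguments of Iwasawa, Ware, and others. A minor remaining point is the elementary verification that distinct topological generators of the subgroup $1 + p^s\Z_p \leq \Z_p^\times$ produce isomorphic semidirect products $\Z_p \ltimes \Z_p^{d-1}$, which removes any apparent discrepancy between the scalar $q$ arising from Frobenius and the canonical scalar $1 + p^s$ appearing in the classification.
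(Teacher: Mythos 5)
Your skeleton is exactly the one the paper intends: the ``if'' direction follows from the strong Frattini-resistance of $G_k(p)$ (Theorem~\ref{Galois-Frattini-resistant} for $p$ odd, Theorem~\ref{p^2 root} for $p=2$, since $\sqrt{-1}=\zeta_4$) together with Corollary~\ref{FR}, and the ``only if'' direction combines Theorem~\ref{main} and Theorem~\ref{solvable} with realizability of the resulting groups as maximal pro-$p$ Galois groups. The paper simply cites this realizability as well known (via \cite{Ware92} and \cite{Qu14}), whereas you flesh it out with the explicit iterated Laurent series construction; for finitely generated $G$ your details check out. The field $\overline{\F}_\ell((t_1))\cdots((t_d))$ gives $G_k(p)\cong\Z_p^d$ with trivial cyclotomic action since $\mu_{p^\infty}\subseteq\overline{\F}_\ell$; Dirichlet supplies a prime $\ell\equiv 1+p^s \pmod{p^{s+1}}$ (so you may take $f=1$), which forces $\zeta_p\in\F_q$, and $\sqrt{-1}\in\F_q$ when $p=2$, $s\geq 2$; the split tame structure theorem for Henselian fields of residue characteristic $\ell\neq p$ applies; and the final generator change is indeed harmless, since $x\mapsto x^{\alpha}$ with $\alpha\in\Z_p^{\times}$ preserves $\langle x\rangle$ and replaces the scalar $q$ by $q^{\alpha}$, and $1+p^s$ lies in the procyclic group $1+p^s\Z_p=\overline{\langle q\rangle}$ as another topological generator (here $p$ odd or $s\geq 2$ is what makes $1+p^s\Z_p$ procyclic with $q$ a generator).

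There is, however, one genuine gap: the corollary covers all solvable Frattini-injective pro-$p$ groups, and in Theorem~\ref{solvable} the free abelian group $A$ (and likewise the free abelian case itself) may have \emph{infinite} rank --- nothing in the hypothesis forces $G$ to be finitely generated, and indeed $\Z_p^I$ for infinite $I$ is Frattini-injective and abelian. Your construction, phrased entirely in terms of finitely many Laurent variables and a dimension $d$, only realizes the finitely generated groups, so as written the ``only if'' direction is incomplete. The repair is standard but must be said: replace iterated Laurent series by generalized (Hahn) power series fields $\overline{\F}_\ell((\Gamma))$ and $\F_q((\Gamma))$ with $\Gamma=\bigoplus_{i\in I}\Z$ totally ordered; these are maximally complete, hence Henselian, with residue characteristic $\ell\neq p$ and value group $\Gamma$, and the same structure theorem gives inertia $\mathrm{Hom}(\Gamma,\Z_p(1))\cong\Z_p(1)^{I}$, which is exactly the free abelian pro-$p$ group of rank $|I|$. (Note also that your formula for the kernel as $\Z_p(1)^{\dim_{\F_p}(\Gamma/p\Gamma)}$ is correct only because your $\Gamma$ is free abelian; in general the kernel is $\mathrm{Hom}(\Gamma,\Z_p(1))$, so it is safer to state it that way when passing to infinite rank.) With this amendment, and noting that you cannot instead handle infinite rank by taking unions of your fields $k_d$ --- maximal pro-$p$ Galois groups do not pass to such direct limits as inverse limits in any straightforward way --- your argument becomes a complete proof.
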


In particular, we recover a result due to Ware \cite{Ware92} (when $p$ is odd and $k$ contains a primitive $p^2$th root of unity; for $p=2$ see \cite{JaWa89} and \cite{Wa79}) and 
Quadrelli  \cite{Qu14} (for all $k$; see also \cite{Qu20b}).

\begin{cor}
\label{Galois-solvable}
Let $G$ be a solvable or $p$-adic analytic pro-$p$ group. Then $G$ can be realized as a maximal pro-$p$ Galois group of some field $k$ that contains a primitive $p$th root of unity (and also $\sqrt{-1} \in k$ if $p=2$) if and only if it is free abelian 
or isomorphic to a semidirect product $\langle x \rangle \ltimes A$, where $\langle x \rangle \cong \Z_p$, $A$ is a free abelian pro-$p$ group and 
$x$ acts on $A$ as scalar multiplication by $1+p^s$, with $s \geq 1$ if $p>2$, and $s \geq 2$ if $p=2$.
\end{cor}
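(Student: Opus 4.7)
The plan is essentially to assemble the corollary from the preceding results, so the proof will be very short. The first step is to invoke Corollary~\ref{Galois-analytic}, which asserts that for a solvable or $p$-adic analytic pro-$p$ group, being realizable as $G_k(p)$ for some field $k$ containing a primitive $p$th root of unity (and $\sqrt{-1}$ when $p=2$) is equivalent to being Frattini-injective. This reduces the corollary to a purely group-theoretic classification problem: describing the Frattini-injective pro-$p$ groups within these two classes.

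For the solvable case, I would cite Theorem~\ref{solvable} verbatim, which yields exactly the description in the statement. For the $p$-adic analytic case, Theorem~\ref{main} lists the Frattini-injective groups as either $\Z_p^d$ or the metabelian semidirect product $\langle x \rangle \ltimes \Z_p^{d-1}$ with $x$ acting by scalar multiplication by $1+p^s$ (with $s \geq 1$ if $p$ is odd, $s \geq 2$ if $p=2$). The last step is then merely an observation: every group appearing in Theorem~\ref{main} is a special case of the description in Theorem~\ref{solvable}, corresponding to $A$ of finite $\Z_p$-rank. In particular, every $p$-adic analytic Frattini-injective pro-$p$ group is already solvable, so the $p$-adic analytic hypothesis contributes no new groups beyond those furnished by the solvable case, and the unified description in the corollary is consistent.

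There is no real obstacle here, since all the substantive work has been done in Theorems~\ref{main} and~\ref{solvable} and in Corollary~\ref{Galois-analytic}. The only point requiring explicit verification is the compatibility of the two classifications on their overlap, which is immediate from inspecting the statements.
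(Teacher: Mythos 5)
Your proposal is correct and follows the paper's intended derivation exactly: the corollary is obtained by combining Corollary~\ref{Galois-analytic} (realizability $\iff$ Frattini-injectivity within these classes) with the classifications in Theorem~\ref{solvable} and Theorem~\ref{main}. Your compatibility check --- that the groups of Theorem~\ref{main} are precisely the finite-rank (hence $p$-adic analytic, and in fact metabelian, so already solvable) instances of the description in Theorem~\ref{solvable} --- is the only verification required, and you carry it out correctly.
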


As a corollary of Theorem~\ref{max-normal}, we obtain yet another well-known result on maximal pro-$p$ Galois groups due to Engler and Nogueira \cite{EngNog} (for $p=2$) and Engler and Koenigsmann \cite{EngKoen} (for $p>2$).

\begin{cor}
Let $k$ be a field containing a primitive $p$th root of unity (and $\sqrt{-1} \in k$ if $p=2$). Then $G_k(p)$ contains a unique maximal normal abelian subgroup.
\end{cor}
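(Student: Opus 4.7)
The plan is to assemble the corollary from two results already established earlier in the paper. First, I would invoke the theorem stated just above Theorem~\ref{absolute Galois}, which asserts that under exactly the hypotheses imposed on $k$ in this corollary (namely, $k$ contains a primitive $p$th root of unity, plus $\sqrt{-1} \in k$ when $p=2$), the maximal pro-$p$ Galois group $G_k(p)$ is strongly Frattini-resistant. Next, I would appeal to the observation made in the text immediately after the definitions of the strengthened notions: every strongly Frattini-resistant pro-$p$ group is in particular Frattini-injective. Hence $G_k(p)$ is a Frattini-injective pro-$p$ group.

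With Frattini-injectivity in hand, the conclusion becomes an immediate instance of Theorem~\ref{max-normal}, which guarantees that any Frattini-injective pro-$p$ group $G$ possesses a unique maximal normal abelian subgroup $N$. Applying this to $G = G_k(p)$ yields the statement. Since both ingredients are already proved earlier in the paper, the corollary reduces to a one-line concatenation and there is no real obstacle to overcome; the only thing to verify is that the hypotheses on $k$ used in the Frattini-resistance theorem coincide with those of the corollary, which is plainly the case. If desired, one could emphasize that properties (i)--(iv) of Theorem~\ref{max-normal} are automatically inherited by $N \trianglelefteq G_k(p)$, giving a sharper structural statement than the bare uniqueness claim here, but this is not needed for the corollary as stated.
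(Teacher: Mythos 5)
Your proposal is correct and follows exactly the route the paper intends: the corollary is deduced by combining the theorem that $G_k(p)$ is strongly Frattini-resistant under these hypotheses with Corollary~\ref{FR} (strongly Frattini-resistant implies Frattini-injective) and then applying Theorem~\ref{max-normal}. Your hypothesis check (that the conditions on $k$ in the corollary match those of the Frattini-resistance theorem) is the only point requiring verification, and you handle it correctly.
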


More recently, $1$-smooth cyclotomic pro-$p$ pairs, a formal version of Hilbert~$90$ for pro-$p$ groups (for a precise definition, see Section~\ref{Maximal pro-$p$ Galois groups}), have been investigated in an attempt to
abstract essential features of maximal pro-$p$ Galois groups (see \cite{CleFlo}, \cite{EfQu19}, \cite{QuWe18}, \cite{Qu20a} and \cite{Qu20b}).

\begin{thm}\label{1-smooth}
Let $\mathcal{G}=(G, \theta)$ be a torsion-free $1$-smooth cyclotomic pro-$p$ pair. If $p=2$, in addition, assume that $\textrm{Im}(\theta) \leq 1+4\Z_2$.
Then $G$ is strongly Frattini-resistant. 
\end{thm}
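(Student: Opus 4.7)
The plan is to use the inheritance properties of $1$-smooth cyclotomic pairs together with a Kummer-theoretic description of Frattini subgroups to deduce strong Frattini-resistance from a single local implication.

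First, I would verify that torsion-freeness, $1$-smoothness, and the hypothesis $\mathrm{Im}(\theta) \leq 1+4\Z_2$ (when $p=2$) all pass to every closed subgroup: the restricted pair $(H,\theta|_H)$ is again a torsion-free $1$-smooth cyclotomic pro-$p$ pair of the same kind. This inheritance is essentially built into the definition of $1$-smoothness, which is phrased as a property of all closed subgroups of $G$. Consequently, in order to prove that $G$ is strongly Frattini-resistant, it suffices to prove the implication $\Phi(H) \leq \Phi(K) \Rightarrow H \leq K$ for all closed subgroups $H,K$ of $G$: the reverse implication is trivial, and applying the same statement to closed subgroups of $G$ yields strong Frattini-resistance throughout the lattice.

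Next, by expressing an arbitrary closed $H$ as the closure of the union of its finitely generated closed subgroups $H_i$ (each satisfying $\Phi(H_i)\leq \Phi(H)\leq \Phi(K)$), I would further reduce to the case when $H$ is finitely generated. The heart of the argument is then to use the $1$-smoothness of the closed subgroup $L:=\overline{\langle H,K\rangle}$ to give a rigid, Kummer-theoretic description of $\Phi(L)=\overline{L^p[L,L]}$. The Hilbert~$90$-flavoured hypothesis on $(L,\theta|_L)$ furnishes, for each $\ell \in L$, an obstruction class with coefficients in $\F_p(\theta|_L)$ whose vanishing detects membership in $\Phi(L)$. From $\Phi(H)\leq \Phi(K)$ I would deduce that these obstruction classes vanish on the image of $H$ in the Kummerian quotient of $K$, forcing $H \leq K\,\Phi(L)$. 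A descent argument along the Frattini filtration of $L$ (whose intersection is trivial inside any finitely generated closed subgroup of $L$ that contains $H$) then yields $H \leq K$.

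The main obstacle will be making the Kummerian description of $\Phi(L)$ effective enough to convert $\Phi(H)\leq \Phi(K)$ into the modular containment $H\leq K\,\Phi(L)$. This is exactly what $1$-smoothness is designed to provide, but transferring cohomological data on $L$ to a containment relation between $H$ and $K$ requires careful bookkeeping. The hypothesis $\mathrm{Im}(\theta)\leq 1+4\Z_2$ for $p=2$ is indispensable here, since it rules out the anomalous $2$-adic behaviour responsible for the failure of Frattini-injectivity in Demushkin groups with $q(G)=2$ (Theorem~\ref{Dem-Frattini-resistant-free}(iii)). Once this step is secured, the overall argument closely parallels the proof for maximal pro-$p$ Galois groups treated earlier in the paper, with $1$-smoothness playing the role of the genuine Hilbert~$90$ isomorphism available in the Galois-theoretic setting.
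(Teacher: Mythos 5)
There is a genuine gap: the heart of your argument is deferred rather than proved. Your plan rests on an ``effective Kummerian description'' of $\Phi(L)$ via obstruction classes with coefficients in $\F_p(\theta|_L)$, but no such description is exhibited, and the mod-$p$ statement you allude to is vacuous as a tool: since $\mathrm{Im}(\theta)\leq 1+p\Z_p$, the module $\Z_p(1)/p\Z_p(1)$ is the \emph{trivial} module, so ``classes in $H^1(L,\F_p)$ detect $\Phi(L)$'' is just the definition of the Frattini subgroup and carries no information from $1$-smoothness. The actual content of the hypothesis is at the $p^2$-level, and the crux is a concrete local computation that your proposal never performs: given an open $U$ and a maximal subgroup $M\leq U$, one lifts the projection $\phi\colon U\to U/M\cong\Z_p/p\Z_p$ (using the surjectivity of $H^1(U,\Z_p(1)/p^2\Z_p(1))\to H^1(U,\Z_p(1)/p\Z_p(1))$) to a $1$-cocycle $d\colon U\to\Z_p(1)/p^2\Z_p(1)$, checks $d(\Phi(M))=0$, and for $x\in U\setminus M$ computes $d(x^p)=(1+\alpha+\cdots+\alpha^{p-1})\,d(x)$ with $\alpha=\theta(x)$; the whole theorem then hinges on the valuation fact that $\tfrac{\alpha^p-1}{\alpha-1}\equiv p\pmod{p^2}$ for $\alpha\in 1+p\Z_p$ when $p$ is odd (and the analogous statement for $\alpha\in 1+4\Z_2$, which is exactly why $\mathrm{Im}(\theta)\leq 1+4\Z_2$ is imposed — for $p=2$ the paper instead observes the action on $\Z_2(1)/4\Z_2(1)$ is trivial and invokes commutator-resistance via Proposition~\ref{extension}). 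You correctly located \emph{where} the $p=2$ hypothesis matters, but you supplied no mechanism; ``this is exactly what $1$-smoothness is designed to provide'' is an appeal to the conclusion.

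A second, independent defect is your global reduction. Setting $L:=\overline{\langle H,K\rangle}$ and arguing ``$H\leq K\Phi(L)$, then descend along the Frattini filtration'' does not work as stated: a containment modulo $\Phi(L)$ does not self-improve under iteration (nothing in $\Phi(H)\leq\Phi(K)$ forces the discrepancy $H\cap\Phi(L)$ to land in $K\Phi_2(L)$, and for infinitely generated $L$ the Frattini series need not even intersect trivially). The paper's route avoids this entirely: strong Frattini-resistance is equivalent to the hierarchical-triple condition $x^p\in\Phi(K)\Rightarrow x\in K$, which by Proposition~\ref{open-resistant} reduces to \emph{open} subgroups $U$ (separate $x$ from a closed $K$ by an open $U\supseteq K$), and then to maximal subgroups of $U$, where the cocycle computation above (Lemma~\ref{quasi-smooth}) closes the argument. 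Until you replace the ``descent'' step with such a reduction and carry out the lifting-plus-valuation computation, the proposal is a plausible outline but not a proof.
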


Consequently, many of the known properties of $1$-smooth cyclotomic pro-$p$ pairs can be obtained as consequences of Frattini-injectivity (see Section~\ref{Maximal pro-$p$ Galois groups}).

\medskip

\textbf{Outline of the paper:} In Section~\ref{basic results}, several elementary results on Frattini-injective pro-$p$ groups are established. 
Theorem~\ref{main} is proved in Section~\ref{p-adic analytic}. In Section~\ref{Hierarchical triples}, the concepts of Frattini-resistance and commutator-resistance are developed within the unifying framework of hierarchical triples.
The proofs of Theorem~\ref{solvable} and Theorem~\ref{max-normal} are given in Section~\ref{Frattini-injective solvable}. Section~\ref{Free and Demushkin} is devoted to Free pro-$p$ and Demushkin groups.
In Section~\ref{Maximal pro-$p$ Galois groups}, Frattini-injectivity is investigated in the context of Galois theory. We close the paper with a brief section on another related concept, $p$-power resistance, and a 
section in which we formulate several problems that we hope will stimulate further research on Frattini-injective pro-$p$ groups.    

\medskip

\textbf{Notation:} We take all group theoretic terms in the appropriate sense for topological groups; for instance, subgroups are assumed to be closed, homomorphisms are continuous, and generators are always understood to be topological generators.
Let $G$ be a pro-$p$ group, $H$ a subgroup of $G$ and $x, y \in G$. We use the following fairly standard notation: $d(G)$ is the cardinality of a minimal generating set for $G$;  $x^y=y^{-1}xy$ and
$[x,y]=x^{-1}x^y$; the $n$th terms of the derived series and the lower central series of $G$ are denoted by $G^{(n)}$ and $\gamma_n(G)$, respectively, with the exception of the commutator subgroup, which is always denoted by $[G,G]$; we write $G^{ab}$ for the abelianization of $G$;
the center of $G$ is denoted by $Z(G)$; $N_G(H)$ and $C_G(H)$ are the normalizer and the centralizer of $H$ in $G$, respectively; $G^p$ is the subgroup of $G$ generated by $p$th powers of elements of $G$;
the terms of the lower $p$-series are denoted by $P_i(G)$, so $P_1(G)=G$ and $P_{i+1}(G)=P_i(G)^p[G, P_i(G)]$ for $i \geq 1$.

%Efrat conjectured that if  $G_k(p)$ is finitely generated, then it can be built from $\Z_p$, Demushkin groups and $\Z / 2\Z$ if $p=2$,  by iterating two group theoretic operations, namely, free  pro-$p$ products and certain semidirect products; this is the so-called elementary type conjecture (cf. \cite{Ef97} and  \cite{Ef98}). 

\section{Basic properties of Frattini-injective pro-$p$ groups}
\label{basic results}

Frattini-injectivity is obviously a hereditary property, that is, every subgroup of a Frattini-injective pro-$p$ group is Frattini-injective. 
Furthermore, a Frattini-injective pro-$p$ group is necessarily torsion-free: if a pro-$p$ group $G$ has a non-trivial element of finite order, then it has an element, say $x$,  of order $p$ and $\Phi(\langle x \rangle)=\{1_G\}=\Phi(\{1_G\})$.
Hence, the only Frattini-injective finite $p$-group is the trivial group, which henceforth will be tacitly disregarded.

\begin{lem}\label{normal}
Let $G$ be a  Frattini-injective pro-$p$ group, and let $H$ be a finitely generated subgroup of $G$.
Then $N_G(H)=N_G(\Phi(H))$. In particular, $H \unlhd G$ if and only if $\Phi(H) \unlhd G$.
\end{lem}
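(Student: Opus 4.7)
The plan is to prove the two inclusions $N_G(H) \subseteq N_G(\Phi(H))$ and $N_G(\Phi(H)) \subseteq N_G(H)$ separately. The first is a general fact that has nothing to do with Frattini-injectivity; the second is precisely where the hypothesis intervenes.

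For the first inclusion, I would use that $\Phi$ is a characteristic construction: for any pro-$p$ group $H$ and any automorphism $\alpha$ of the ambient group restricting to $H$, one has $\alpha(\Phi(H)) = \Phi(\alpha(H))$. Applied to conjugation by $g \in G$, this gives $\Phi(H)^g = \Phi(H^g)$. So if $g \in N_G(H)$, then $H^g = H$ forces $\Phi(H)^g = \Phi(H)$, i.e., $g \in N_G(\Phi(H))$. This step is routine and uses no special hypothesis on $G$.

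For the reverse inclusion, suppose $g \in N_G(\Phi(H))$. Then $\Phi(H^g) = \Phi(H)^g = \Phi(H)$. The subgroup $H^g$ is finitely generated (it is the image of $H$ under an automorphism, hence $d(H^g) = d(H)$), and $H$ is finitely generated by hypothesis. Thus $H$ and $H^g$ are two finitely generated subgroups of $G$ with the same Frattini subgroup, and Frattini-injectivity of $G$ yields $H^g = H$, i.e., $g \in N_G(H)$. The ``in particular'' clause follows immediately by taking $g$ to range over all of $G$: $H \unlhd G$ iff $N_G(H) = G$ iff $N_G(\Phi(H)) = G$ iff $\Phi(H) \unlhd G$.

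There is essentially no obstacle here; the only thing to be careful about is that Frattini-injectivity is stated for finitely generated subgroups, so one must observe that $H^g$ is finitely generated whenever $H$ is. This is why the finite generation hypothesis on $H$ is needed in the statement.
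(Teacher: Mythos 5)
Your proposal is correct and follows essentially the same route as the paper: the inclusion $N_G(H) \leq N_G(\Phi(H))$ from characteristicity of $\Phi(H)$, and the reverse via $\Phi(H^g) = \Phi(H)^g$ together with Frattini-injectivity applied to the finitely generated subgroups $H$ and $H^g$ (the paper phrases this contrapositively, but the argument is identical). Your explicit remark that $H^g$ is finitely generated is a fair point of care, though the paper leaves it tacit.
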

\begin{proof}
Since $\Phi(H)$ is a characteristic subgroup of $H$, it follows that $N_G(H) \leq N_G(\Phi(H))$. For the other inclusion, let $x \in G \setminus N_G(H)$; 
then $H \neq x^{-1}Hx$ and by Frattini-injectivity $ \Phi(H) \neq \Phi(x^{-1}Hx) = x^{-1} \Phi(H)x$. Hence, $x \notin N_G(\Phi(H))$.
\end{proof}

%Note that under the hypothesis that $G$ is strongly Frattini-injective, the conclusion of  Lemma~\ref{normal} holds for all subgroups of $G$. 

A subgroup $H$ of a pro-$p$ group $G$ is said to be \emph{isolated} (in G) if  $x \in H$ whenever $x^p \in H$. 
(More generally, the condition implies that for every $\alpha \in \mathbb{Z}_p \setminus \{0\}$, if $x^{\alpha} \in H$, then $x \in H$.) 
A related concept  (which will appear only later) is the \emph{isolator} of a subgroup $H$ of a pro-$p$ group $G$;
it is the smallest isolated subgroup of $G$ containing $H$. 

\begin{pro}
\label{isolated}
Every maximal abelian subgroup of a Frattini-injective pro-$p$ group is isolated. 
\end{pro}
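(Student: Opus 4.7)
The plan is to reduce the statement to a commutation claim via the standard fact that a maximal abelian subgroup $A$ of any group coincides with its own centralizer $C_G(A)$. Thus, given $x \in G$ with $x^p \in A$, it will suffice to show that $x$ commutes with every $a \in A$, for then $x \in C_G(A) = A$. I may assume $x \neq 1_G$, since otherwise there is nothing to prove.

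The key step will be to pair $\langle x \rangle$ with its conjugate $\langle x^a \rangle$ and use the definition of Frattini-injectivity directly. Since $x^p \in A$ and $A$ is abelian, conjugation by any $a \in A$ fixes $x^p$, so
\[ \Phi(\langle x^a \rangle) \;=\; \langle (x^a)^p \rangle \;=\; \langle (x^p)^a \rangle \;=\; \langle x^p \rangle \;=\; \Phi(\langle x \rangle). \]
Both $\langle x \rangle$ and $\langle x^a \rangle$ are finitely generated (indeed procyclic), so Frattini-injectivity forces $\langle x \rangle = \langle x^a \rangle$.

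To finish, I would invoke the fact, already recorded at the start of this section, that $G$ is torsion-free. Hence $\langle x \rangle \cong \mathbb{Z}_p$, and conjugation by $a$ restricts to an automorphism $x \mapsto x^\alpha$ for some $\alpha \in \mathbb{Z}_p^\times$. Substituting into $(x^a)^p = x^p$ gives $x^{p(\alpha-1)} = 1$, and torsion-freeness then yields $\alpha = 1$, i.e., $[a,x] = 1$.

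The entire argument hinges on the single observation that $\Phi$ does not distinguish $\langle x \rangle$ from its conjugate $\langle x^a \rangle$; the remainder is essentially formal once torsion-freeness is in hand. I do not anticipate any serious obstacle.
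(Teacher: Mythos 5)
Your proposal is correct and takes essentially the same approach as the paper: your central step, that $\Phi(\langle x^a \rangle)=\Phi(\langle x \rangle)$ together with Frattini-injectivity forces $\langle x^a \rangle = \langle x \rangle$, is precisely the argument of Lemma~\ref{normal} specialized to the procyclic subgroup $\langle x \rangle$ (whose Frattini $\langle x^p\rangle$ is centralized by $A$), after which both proofs use torsion-freeness to force $x^a=x$. The remaining differences are cosmetic: you finish via $\mathrm{Aut}(\Z_p)\cong \Z_p^{\times}$ and $C_G(A)=A$, where the paper instead expands $1=[x^p,a]=[x,a]^p$ inside $\langle x\rangle$ and invokes maximality of $A$ in $H=\langle x, A\rangle$.
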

\begin{proof}
Let $G$ be a Frattini-injective pro-$p$ group, and let $A$ be a maximal abelian subgroup of $G$.
Consider an element $x \in G$ such that $x^p \in A$, and set $H:=\langle x, A \rangle$. Then $x^p \in Z(H)$, and thus
$\Phi(\langle x \rangle)=\langle x^p \rangle \unlhd H$. It follows from Lemma~\ref{normal} that $\langle x \rangle \unlhd H$. 
This, in turn, implies that $[x, a] \in \langle x  \rangle$ for every $a \in A$.
Consequently,
$$1=[x^p, a]=[x, a]^{x^{p-1}}[x, a]^{x^{p-2}} \cdots [x, a]=[x, a]^p.$$
Since all Frattini-injective pro-$p$ groups are torsion free, it follows that $[x, a]=1$ for every $a \in A$. Hence,  
$H$ is abelian, and as $A$ is a maximal abelian subgroup of $G$, we get that $H=A$, i.e., $x \in A$.
\end{proof}

\begin{cor}
\label{virtually-abelian}
Let $G$ be a Frattini-injective pro-$p$ group, $x, y \in G$, and $\alpha, \beta \in \mathbb{Z}_p \setminus \{0\}$.
The following assertions hold:
\begin{enumerate}[(i)]
\item If $G$ is virtually abelian, then it is abelian. 
\item All centralizers of elements of $G$ are isolated.
\item If $x^{\alpha}$ and $y^{\beta}$ commute, then $x$ and $y$ commute.
\item If $x^{\alpha}=y^{\alpha}$, then $x=y$, i.e., $G$ has the unique extraction of roots property. 
\end{enumerate}
\end{cor}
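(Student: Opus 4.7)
The plan is to prove (ii) first by adapting the argument of Proposition~\ref{isolated}, and then to deduce (iii), (iv) and (i) from it in a largely formal manner. Throughout I will use that $G$ is torsion-free and that, by the parenthetical remark following the definition of \emph{isolated}, an isolated subgroup is in fact closed under extraction of $\alpha$-th roots for every $\alpha \in \Z_p \setminus \{0\}$.

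For (ii), I would fix $x \in G$ and $y \in G$ with $y^p \in C_G(x)$; the case $y = 1$ is trivial, so assume $y \neq 1$, in which case torsion-freeness gives $\overline{\langle y \rangle} \cong \Z_p$ and hence $\Phi(\overline{\langle y \rangle}) = \overline{\langle y^p \rangle}$. Set $H := \overline{\langle x, y \rangle}$; by hypothesis, $y^p$ commutes with both $x$ and $y$, so $y^p \in Z(H)$ and consequently $\Phi(\overline{\langle y \rangle}) \trianglelefteq H$. Lemma~\ref{normal} now promotes this to $\overline{\langle y \rangle} \trianglelefteq H$, so $xyx^{-1} = y^{\lambda}$ for some $\lambda \in \Z_p$. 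Conjugating $y^p$ by $x$ gives $y^{p(\lambda-1)} = 1$, which forces $\lambda = 1$ by torsion-freeness; i.e., $[x,y] = 1$.

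For (iii), the inclusion $y^{\beta} \in C_G(x^{\alpha})$ together with the isolation of $C_G(x^{\alpha})$ from (ii) yields $y \in C_G(x^{\alpha})$, equivalently $x^{\alpha} \in C_G(y)$; a second application of isolation, now to $C_G(y)$, gives $x \in C_G(y)$. For (iv), the equality $x^{\alpha} = y^{\alpha}$ makes (iii) applicable, so $\overline{\langle x, y \rangle}$ is abelian and torsion-free, and $(xy^{-1})^{\alpha} = 1$ with $\alpha \neq 0$ forces $x = y$. For (i), if $A$ is an open abelian subgroup of $G$, then every $g \in G$ has some power $g^{p^m} \in A$ commuting with every $a \in A$; by (iii), $g$ itself commutes with $a$, so $A \leq Z(G)$ and $Z(G)$ has finite index. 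For any $g,h \in G$, choosing $n$ with $g^{p^n} \in Z(G)$ and invoking (iii) once more (with $\alpha = p^n$, $\beta = 1$) yields $[g,h] = 1$, so $G$ is abelian.

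The only genuinely new work is (ii); the key insight will be the choice of the auxiliary subgroup $H = \overline{\langle x, y \rangle}$, which makes $y^p$ central in $H$ and thus allows Lemma~\ref{normal} to convert the (obvious) normality of $\Phi(\overline{\langle y \rangle})$ into the (desired) normality of $\overline{\langle y \rangle}$ itself.
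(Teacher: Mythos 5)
Your proof is correct, but it takes a somewhat different route than the paper's, so a comparison is in order. The paper derives everything from Proposition~\ref{isolated}: for (ii) it notes that $\langle x, y^p \rangle$ is abelian, hence contained in a maximal abelian subgroup $A$, and isolation of $A$ (from that proposition) gives $y \in A \leq C_G(x)$; assertion (i) is likewise read off directly from the proposition, while (iii) and (iv) are deduced essentially as you do. You instead bypass maximal abelian subgroups altogether and re-run the engine of Proposition~\ref{isolated} at the level of the procyclic subgroup $\overline{\langle y \rangle}$ inside $H = \overline{\langle x, y \rangle}$: centrality of $y^p$ in $H$ makes $\Phi(\overline{\langle y \rangle})$ normal in $H$, Lemma~\ref{normal} upgrades this to $\overline{\langle y \rangle} \trianglelefteq H$, and then $xyx^{-1} = y^{\lambda}$ together with $y^{p(\lambda - 1)} = 1$ and torsion-freeness forces $\lambda = 1$ --- a computation that replaces the paper's identity $1 = [x^p,a] = [x,a]^p$. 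What your route buys: (ii) is self-contained, with no appeal to the (Zorn's lemma) existence of maximal abelian subgroups, and (i) falls out of (iii) formally; in fact your (i) admits a further shortcut --- for $g, h \in G$ pick $m, n$ with $g^{p^m}, h^{p^n} \in A$ and apply (iii) once, skipping the intermediate step $A \leq Z(G)$. What the paper's route buys: Proposition~\ref{isolated} is needed elsewhere anyway (e.g.\ in the Remark following the corollary, in Lemma~\ref{metabelian} and in Theorem~\ref{max-normal}), so quoting it makes the corollary's proof very short. Both arguments rest on the same two pillars, Lemma~\ref{normal} and torsion-freeness, and both your (iii) and the paper's silently-invoked version lean on the parenthetical remark that isolation propagates from $p$-th roots to arbitrary $\alpha$-th roots with $\alpha \in \Z_p \setminus \{0\}$, which you were right to flag explicitly.
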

\begin{proof}
$(i)$ is an immediate consequence of Proposition~\ref{isolated}. In order to prove $(ii)$, 
suppose that $y^p \in C_G(x)$. Then $H:=\langle x, y^p \rangle$ is an abelian group, and it is contained in some
maximal abelian subgroup $A$ of $G$. By Proposition~\ref{isolated}, $y \in A$, and hence $y \in C_G(x)$. 

Suppose that $x^{\alpha}$ and $y^{\beta}$ commute. Then $y^{\beta} \in C_G(x^{\alpha})$, and it follows from $(ii)$ that $y \in C_G(x^{\alpha})$. 
By applying the same argument to the pair of commuting elements $y$ and $x^{\alpha}$, we conclude that $x$ and $y$ commute, which proves $(iii)$.

Now, $(iv)$ is a consequence of $(iii)$ and the fact that
free abelian groups of rank two have the unique extraction of roots property.
\end{proof}

\begin{rem}
Note that all of the claims collected in Corollary~\ref{virtually-abelian} hold true for torsion-free (pro-$p$) 
groups all of whose maximal abelian subgroups are isolated. Furthermore, a pro-$p$ group $G$ has the unique extraction of roots property
if and only if every maximal abelian subgroup of $G$ is isolated. Indeed, suppose that $G$ is a pro-$p$ group with the unique extraction of roots property. Then, $G$ is torsion free and distinct cyclic subgroups of $G$ 
have distinct Frattinis. As in  the proof of Lemma~\ref{normal}, we deduce that for every cyclic subgroup $H$ of $G$, if $\Phi(H) \unlhd G$, then $H \unlhd G$. 
Now observe that besides torsion-freeness, this is the only other property of Frattini-injective pro-$p$ groups used in the proof of Proposition~\ref{isolated}.   
\end{rem}

\begin{lem}
\label{monotone-f.g.}
Let $G$ be a finitely generated pro-$p$ group, and let $M$ be a maximal subgroup of $G$. 
If $d(M) < d(G)$, then $d(G) = d(M) + 1$ and $\Phi(G) = \Phi(M)$. 
\end{lem}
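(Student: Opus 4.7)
The plan is to exploit the standard facts that a maximal subgroup of a finitely generated pro-$p$ group has index $p$, is normal, and contains the Frattini subgroup. Once these are in hand, the statement reduces to a bookkeeping of indices of elementary abelian quotients.

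First I would fix $x \in G \setminus M$; since $M$ is maximal and $G$ is a pro-$p$ group, $M \trianglelefteq G$ with $[G:M]=p$, so $G = \langle x, M\rangle$. Taking any minimal generating set $m_1,\dots,m_{d(M)}$ of $M$, the elements $x, m_1, \dots, m_{d(M)}$ generate $G$, whence $d(G) \leq d(M) + 1$. Combined with the hypothesis $d(M) < d(G)$, this forces $d(G) = d(M) + 1$, which is the first conclusion.

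For the equality $\Phi(G) = \Phi(M)$, I would squeeze $\Phi(G)$ between $\Phi(M)$ and a larger group of the same index. On one hand, $\Phi(M) = M^{p}[M,M] \leq G^{p}[G,G] = \Phi(G)$. On the other hand, $\Phi(G) \leq M$ (because $M$ is maximal), and $M/\Phi(G)$ is a subgroup of the elementary abelian group $G/\Phi(G)$ of index $p$, so
\[
[M:\Phi(G)] = \frac{[G:\Phi(G)]}{[G:M]} = \frac{p^{d(G)}}{p} = p^{d(G)-1}.
\]
Using $d(G) = d(M)+1$ from the previous paragraph, this equals $p^{d(M)} = [M:\Phi(M)]$. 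From $\Phi(M) \leq \Phi(G) \leq M$ with $[M:\Phi(M)] = [M:\Phi(G)]$ we conclude $\Phi(G) = \Phi(M)$.

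There is no real obstacle: the whole argument is a direct application of the characterization of maximal subgroups of pro-$p$ groups together with the index formula $|G/\Phi(G)| = p^{d(G)}$. The only subtlety worth flagging is the order of the two conclusions, since the index computation for $\Phi(G)=\Phi(M)$ relies on the previously established equality $d(G)=d(M)+1$.
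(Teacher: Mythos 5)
Your proof is correct and follows essentially the same route as the paper: both derive $d(G)=d(M)+1$ from $G=\langle x,M\rangle$ and then conclude $\Phi(G)=\Phi(M)$ by an index count using $\Phi(M)\leq\Phi(G)$. The only cosmetic difference is that you compare indices inside $M$ via $[M:\Phi(G)]=[M:\Phi(M)]$, while the paper compares $|G:\Phi(M)|$ with $|G:\Phi(G)|$; these are trivially equivalent.
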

\begin{proof}
Suppose that $d(M)<d(G)$, and let $x \in G \setminus M$. Since $G=\langle x, M \rangle $, we must have $d(G) = d(M) + 1$. 
Furthermore,
$$|G: \Phi(M)| = |G:M ||M:\Phi(M)| = p^{1 + d(M)} = p^{d(G)} =  |G: \Phi(G)|,$$
and as $\Phi(M) \leq \Phi(G)$, it follows that  $\Phi(M) = \Phi(G)$.
\end{proof}

\begin{pro}\label{monotone}
Let $G$ be a Frattini-injective pro-$p$ group. Then, every finitely generated subgroup $H$ of $G$ satisfies the following property: if $K$ is an open subgroup of $H$, then $d(K)\geq d(H) $.
\end{pro}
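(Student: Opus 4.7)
The plan is to reduce to the case where $K$ is a maximal subgroup of $H$, and then invoke Lemma~\ref{monotone-f.g.} together with Frattini-injectivity to derive a contradiction.

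First I would note two preliminary facts. Since $H$ is a finitely generated pro-$p$ group, every open subgroup of $H$ is again finitely generated (a standard Nielsen--Schreier-type result for finitely generated pro-$p$ groups), so $K$ is finitely generated. Moreover, since $K$ has finite index in $H$, one can build a finite chain
\[ K = K_0 < K_1 < \cdots < K_n = H \]
in which each $K_i$ is a maximal subgroup of $K_{i+1}$. By an obvious induction on $n$, it suffices to establish the inequality $d(K) \geq d(H)$ in the case $n=1$, i.e., when $K$ is a maximal subgroup of $H$.

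So assume, for contradiction, that $K$ is a maximal subgroup of $H$ with $d(K) < d(H)$. By Lemma~\ref{monotone-f.g.} (applied to the finitely generated pro-$p$ group $H$ and its maximal subgroup $K$), this forces $\Phi(H) = \Phi(K)$. Now $H$ itself is a subgroup of the Frattini-injective pro-$p$ group $G$, hence $H$ is Frattini-injective; but $K$ and $H$ are distinct finitely generated subgroups of $H$ sharing the same Frattini subgroup, contradicting Frattini-injectivity. Therefore $d(K) \geq d(H)$, as required.

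There is no real obstacle here: the key lemma (Lemma~\ref{monotone-f.g.}) and the hereditary nature of Frattini-injectivity (already noted at the start of Section~\ref{basic results}) make the argument essentially immediate once the reduction to maximal subgroups is carried out.
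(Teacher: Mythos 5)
Your proof is correct and follows essentially the same route as the paper, which deduces the proposition from Lemma~\ref{monotone-f.g.} by induction on the index of the open subgroup; your chain of maximal subgroups is just this induction made explicit, with the contradiction $\Phi(H)=\Phi(K)$ against Frattini-injectivity exactly as intended.
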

\begin{proof}
This follows directly from Lemma~\ref{monotone-f.g.} by induction on the index of the open subgroup.
\end{proof}

\section{Frattini-injective $p$-adic analytic pro-$p$ groups}
\label{p-adic analytic}

A pro-$p$ group $G$ is said to be \emph{powerful} if $p$ is odd and $[G,G]\leq G^p$, or $p=2$ and $[G,G]\leq G^4$.   
A finitely generated powerful pro-$p$ group $G$ such that ${\lvert P_i(G) : P_{i+1}(G) \rvert = \lvert G : P_2(G) \rvert}$ for all $i \in \N$ is called \emph{uniform}. By \cite[Theorem~4.5]{DidSMaSe99}, a powerful finitely generated pro-$p$ group is uniform if and only if it is torsion-free. 
Uniform pro-$p$ groups play a central role in the theory of  $p$-adic analytic groups: A topological group is $p$-adic analytic if and only if it contains an open uniform pro-$p$ subgroup 
(see \cite[Theorems 8.1 and 8.18]{DidSMaSe99}). 

In the seminal paper \cite{La65},  Lazard defined \emph{saturable} pro-$p$ groups. For our purposes, it is enough to know that every uniform pro-$p$ group is saturable. To every saturable pro-$p$ group $G$ one can associate a (saturable) $\mathbb{Z}_p$-Lie algebra $L_G$. Moreover, the assignment 
$G \mapsto L_G$ defines an equivalence between the category of saturable pro-$p$ groups and the category of saturable $\mathbb{Z}_p$-Lie algebras. 
One advantage of working with saturable pro-$p$ groups stems from the fact that every torsion-free $p$-adic analytic pro-$p$ group of dimension less than $p$ is saturable (see \cite[Theorem~E]{GoKl09}), but in general not uniform. 

A uniform pro-$p$ group is said to be \emph{hereditarily uniform} if all of its open subgroups are also uniform.

\begin{pro}\label{hereditarily}
Every hereditarily uniform  pro-$p$ group is Frattini-injective.
\end{pro}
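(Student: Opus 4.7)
The plan is to fix finitely generated subgroups $H$ and $K$ of the hereditarily uniform pro-$p$ group $G$ with $\Phi(H)=\Phi(K)$, and to conclude $H=K$. The structural fact about uniform pro-$p$ groups that drives the argument is this: if $U$ is uniform, then $\Phi(U)=U^p$ and the $p$th-power map $u\mapsto u^p$ restricts to a bijection from $U$ onto $\Phi(U)$. (For $p=2$ this still reads $\Phi(U)=U^2$, since powerfulness gives $[U,U]\leq U^4\leq U^2$.) In particular, because $G$ itself is uniform, $g\mapsto g^p$ is injective on all of $G$.

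Assume for the moment that $H$ and $K$ are themselves uniform. Then the conclusion is immediate: for each $h\in H$ we have $h^p\in\Phi(H)=\Phi(K)=K^p$, so $h^p=k^p$ for some $k\in K$; injectivity of $g\mapsto g^p$ on $G$ forces $h=k$, and hence $h\in K$. Thus $H\subseteq K$, and by symmetry $H=K$.

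The main obstacle is therefore justifying that the finitely generated subgroups $H$ and $K$ of $G$ are themselves uniform. Hereditary uniformity supplies this only for \emph{open} subgroups, whereas $H$ and $K$ are closed but need not be open. The route I would take is to invoke the known classification of hereditarily uniform pro-$p$ groups, which identifies $G$ with one of the abelian or metabelian groups that eventually appear in Theorem~\ref{main}; a direct case-by-case inspection then shows that every closed subgroup of such a $G$ is torsion-free and powerful, hence uniform. Alternatively, one can argue intrinsically via Lazard's theory of saturable pro-$p$ groups, transferring the question into the associated $\Z_p$-Lie algebra $L_G$, where closed subgroups correspond to closed Lie subalgebras and powerfulness becomes an explicit bracket condition that can be verified directly in the hereditarily uniform setting.
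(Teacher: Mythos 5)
Your proof is correct, but it takes a genuinely different route from the paper's. The paper never needs $H$ and $K$ themselves to be uniform, and so avoids your ``main obstacle'' entirely: taking $x\in H\setminus K$, it separates the closed subgroup $K$ from $x$ by an \emph{open} subgroup $U$ with $K\leq U$ and $x\notin U$, notes $x^p\in\Phi(H)=\Phi(K)\leq\Phi(U)=U^p$, writes $x^p=z^p$ with $z\in U$ by \cite[Lemma~3.4]{DidSMaSe99} (hereditary uniformity applies directly, since $U$ is open), and then unique extraction of roots in $G$ forces $x=z\in U$, a contradiction. This separation trick buys two things: the proof is self-contained modulo two lemmas of \cite{DidSMaSe99}, with no classification input; and it works verbatim for arbitrary closed subgroups, so it in fact establishes \emph{strong} Frattini-injectivity, which the paper exploits later (Proposition~\ref{p-adic analytic Frattini-resistant}). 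Your route instead imports the full classification of hereditarily uniform pro-$p$ groups from \cite{KlSn2013} --- a much heavier theorem --- and then must verify that finitely generated closed subgroups of the listed groups are uniform. That verification is true and routine, but you assert it rather than carry it out: a closed subgroup $H$ not contained in the abelian normal part $N\cong\Z_p^{d-1}$ splits as $\langle h\rangle\ltimes(H\cap N)$ with $h$ acting on $H\cap N$ as multiplication by a scalar $\mu\in 1+p^{s+k}\Z_p$ for some $k\geq 0$, whence $[H,H]\leq (H\cap N)^{p^{s+k}}\leq H^p$ (and $\leq H^4$ when $p=2$, since then $s\geq 2$), so $H$ is torsion-free and powerful, hence uniform; what it buys is a clean conceptual reduction (Frattini-injectivity follows from bijectivity of $u\mapsto u^p$ on uniform subgroups), at the price of depending on a deep external result. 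One caution about your proposed ``alternative'': closed subgroups of a saturable pro-$p$ group do not in general correspond to closed Lie subalgebras of $L_G$ --- the subgroup--subalgebra dictionary requires extra hypotheses, e.g.\ dimension less than $p$ as in \cite{GoKl09} --- so the Lazard detour as stated is not a safe substitute; the classification route is the one of your two suggestions that actually works.
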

\begin{proof}
Let $G$ be a hereditarily uniform  pro-$p$ group. Suppose that there are distinct subgroups $H$ and $K$ of $G$ such that $\Phi(H) = \Phi(K)$. 
Without loss of generality, we may assume that there is some $x\in H \setminus K$. Choose an open subgroup $U$ of $G$ such that $K \leq U$ and $x \notin U$.
Then $x^p \in \Phi(H) = \Phi(K)  \leq \Phi(U)$. Since $U$ is uniform, $\Phi(U)=U^p$ and $x^p=z^p$ for some $z \in U$ (\cite[Lemma 3.4]{DidSMaSe99}). 
By unique extraction of roots in $G$ (\cite[Lemma 4.10]{DidSMaSe99}), $x = z$, which yields a contradiction with $x \notin U$.
\end{proof}

The hereditarily uniform pro-$p$ groups were classified in \cite{KlSn2013}.  It turns out that a uniform pro-$p$ group $G$ is hereditarily uniform if and only if 
it has a constant generating number on open subgroups, that is,  $d(U) = d(G)$ for every open subgroup $U$ of $G$ (cf. \cite{KlSn11} and \cite[Corollary 1.12]{KlSn2013}). 

\begin{pro}\label{virtually}
Let $G$ be a Frattini-injective  $p$-adic analytic  pro-$p$ group. Then, $G$ is virtually hereditarily uniform. More precisely, $G$ contains an open subgroup $U$ isomorphic to one of the following groups:
 \begin{enumerate}
  \item the abelian group $\Z_p^d$ with $d \geq 1$;
  \item the metabelian group $\langle x \rangle \ltimes \Z_p^{d}$, where $d \geq 1$, $\langle x \rangle \cong \Z_p$,
   and $x$ acts on $\Z_p^{d}$ as scalar multiplication by
    $\lambda$, with $\lambda = 1+p^s$ for some $s \geq 1$ if $p>2$,
    and $\lambda = 1 + 2^s$ for some $s \geq 2$ if $p=2$.
 \end{enumerate}
\end{pro}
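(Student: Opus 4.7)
\emph{Plan.} Since $G$ is $p$-adic analytic, it contains an open uniform pro-$p$ subgroup $V$ of dimension $d$ (in fact of the same dimension as $G$). Frattini-injectivity is hereditary, so $V$ is also Frattini-injective. My aim would be to show that $V$ itself is hereditarily uniform; the conclusion then follows because by the classification of hereditarily uniform pro-$p$ groups in \cite{KlSn2013}, such a $V$ is isomorphic to exactly one of the two listed types, and we may take $U = V$.

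By Proposition~\ref{monotone}, every open subgroup $U \leq V$ already satisfies $d(U) \geq d(V)$. In view of the characterization of hereditary uniformity of a uniform pro-$p$ group by constancy of $d$ on open subgroups (see \cite{KlSn2013}), the task is to establish the reverse inequality $d(U) \leq d(V)$ for every open $U \leq V$. Translating to the $\Z_p$-Lie algebra $L = L_V$ associated with $V$ via the Lazard correspondence, this is equivalent to the \emph{Lie-powerfulness} condition $[L,L] \subseteq pL$: indeed, an open sublattice $M \subseteq L$ of index $p$ gives rise to an open subgroup with one extra generator (beyond $d(V)$) precisely when $M$ fails to be closed under the bracket, which globally amounts to $[L,L] \not\subseteq pL$.

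To prove $[L,L] \subseteq pL$, I would argue by contradiction. Suppose there exist $x, y \in V$ with $c := [x,y] \notin V^p = \Phi(V)$. Using the non-triviality of $c$ modulo $\Phi(V)$, I would construct two distinct finitely generated subgroups $H_1, H_2 \leq V$ with $\Phi(H_1) = \Phi(H_2)$---the construction would exploit the freedom to perturb generators by $c$-factors without changing the Frattini---and thus contradict Frattini-injectivity of $V$. With $[L,L] \subseteq pL$ established, $V$ is hereditarily uniform, and the classification in \cite{KlSn2013} identifies $V$ with one of the groups listed in (1) or (2).

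\emph{Main obstacle.} The delicate point is producing the explicit pair of Frattini-colliding subgroups from the failure of $[L,L] \subseteq pL$. The symbolic calculation inside the Lazard Lie algebra has to be done carefully enough to ensure the two subgroups are genuinely distinct yet share the same $\Phi$; everything else (the hereditary passage, the invocation of \cite[Corollary~1.12 and the classification]{KlSn2013}) is by now standard.
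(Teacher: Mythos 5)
There is a genuine gap, and it sits at the heart of your plan: the claimed equivalence between constancy of the generating number on open subgroups of $V$ and the Lie-theoretic condition $[L,L] \subseteq pL$ is false, in both directions that matter. Since $V$ is uniform, it is in particular powerful, so $[V,V] \leq V^p \leq \Phi(V)$ (for $p$ odd; $[V,V]\leq V^4 \leq V^2 = \Phi(V)$ for $p=2$) holds \emph{automatically}, and correspondingly $[L_V,L_V] \subseteq pL_V$ is automatic for the Lazard Lie lattice of a uniform group. Consequently your contradiction hypothesis---elements $x,y \in V$ with $[x,y] \notin \Phi(V)$---can never occur, so the argument you propose to build on it is vacuous; worse, if your equivalence were correct it would prove that \emph{every} uniform pro-$p$ group is hereditarily uniform, which contradicts the classification in \cite{KlSn2013} (uniform open subgroups of $SL_2(\Z_p)$, or the group $SL_1^1(\Delta_p)$ mentioned at the end of Section~\ref{p-adic analytic}, are uniform but not of the two listed types, and indeed not Frattini-injective). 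The precise logical error is that powerfulness of $L$ is a \emph{global} condition that does not pass to subalgebras: an open subalgebra $M \subseteq L$ corresponding to an open subgroup $U$ satisfies $[M,M] \subseteq pL$, but the relevant condition for $U$ to be powerful (and for $d(U)=\dim_{\F_p} M/(pM+[M,M])$ to equal $d(V)$) is $[M,M] \subseteq pM$, which can fail even when $[L,L]\subseteq pL$. Your parenthetical claim that a sublattice of index $p$ ``fails to be closed under the bracket'' exactly when $[L,L]\not\subseteq pL$ conflates these two conditions.

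The repair is substantially simpler than what you attempt, and it is the paper's route: the inequality you struggle with, $d(U) \leq d(V)$ for $U$ open in the uniform group $V$, is free of charge from the rank theorem for powerful groups \cite[Theorem~3.8]{DidSMaSe99} (every closed subgroup of a finitely generated powerful pro-$p$ group is generated by $d(V)$ elements); Frattini-injectivity is needed only for the opposite inequality, which you correctly extract from Proposition~\ref{monotone}. Together these show $V$ has constant generating number on open subgroups, and then \cite[Corollary~2.4]{KlSn11} identifies $V$ with one of the two listed groups, which are hereditarily uniform by \cite[Corollary~1.12]{KlSn2013}. Your surrounding scaffolding (hereditary passage of Frattini-injectivity, taking $U=V$, invoking the classification) is fine; it is only the middle step that needs to be replaced by the citation above.
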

\begin{proof}
It follows from \cite[Theorem 8.32]{DidSMaSe99} that there is an open subgroup $U$ of $G$ that is uniform.  For every open subgroup $V$ of $U$, we have $d(V) \geq d(U)$ by Proposition~\ref{monotone}, and also
$d(U) \geq d(V)$ by \cite[Theorem 3.8]{DidSMaSe99}. Hence, $U$ is a uniform  pro-$p$ group with constant generating number on open subgroups. By  \cite[Corollary~2.4]{KlSn11}, $U$ is isomorphic to one of the groups listed in the proposition. 
In particular, $U$ is a hereditarily uniform pro-$p$ group (cf. \cite[Corollary 1.12]{KlSn2013}).
\end{proof}

The rest of this section is devoted to eliminating the adverb ``virtually'' from Proposition~\ref{virtually}.
We begin with several lemmas.

\begin{lem}\label{saturable}
Let $p$ be an odd prime, and let $G=\langle x \rangle \ltimes N$, where $\langle x \rangle \cong \Z_p$ and 
$N \cong \Z_p^{d}$ for some $d \geq 1$, be a saturable pro-$p$ group. Suppose that $x^p$ acts on $N$ as scalar multiplication by $1+p^s$ for some $s \geq 1$. Then, $s \geq 2$ and there is a unit $\alpha \in \mathbb{Z}_p^*$ such that  $x^{\alpha}$ acts on $N$ as scalar multiplication by $1+p^{s-1}$.
 \end{lem}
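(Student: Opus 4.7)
The plan is to pass to the saturable Lie algebra associated to $G$ via the Lazard correspondence, which reduces the statement to a routine $p$-adic computation involving the scalar series $\log(1+p^s)$.

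First I would let $L = L_G$ denote the saturable $\Z_p$-Lie algebra attached to $G$, set $X := \log(x) \in L$, and let $L_N \subset L$ be the abelian ideal corresponding to $N$. Under the Lazard correspondence, the conjugation action of $x$ on $N$ matches the action of $\exp(D)$ on $L_N$, where $D := \ad(X)|_{L_N}$; moreover, saturability (together with $p$ odd) forces $D \in p\cdot\operatorname{End}(L_N)$, so the exponential series converges in $\GL(L_N) \cong \GL_d(\Z_p)$ and the action of $x^p$ on $L_N$ is given by $\exp(pD)$.

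Next, from $\exp(pD) = (1+p^s)\id_{L_N}$, applying the matrix $p$-adic logarithm (well-defined on $\id + p\cdot\operatorname{End}(L_N)$) yields
\[pD = \log(1+p^s)\,\id_{L_N},\]
so $D$ is a scalar endomorphism: $D = \beta\,\id_{L_N}$ with $\beta := p^{-1}\log(1+p^s) \in \Q_p$. The standard estimate $v_p(\log(1+p^s)) = s$ (valid for $p$ odd and $s\geq 1$, since the linear term of the series strictly dominates every other) gives $v_p(\beta) = s-1$; the requirement $\beta \in p\Z_p$ then forces $s \geq 2$, which is the first claim.

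For the second claim, I would set
\[\alpha := \frac{p\,\log(1+p^{s-1})}{\log(1+p^s)} \in \Q_p.\]
The same valuation estimate gives $v_p(\alpha) = 1 + (s-1) - s = 0$, so $\alpha \in \Z_p^*$; in particular $x^\alpha \in \langle x\rangle \cong \Z_p$ is well-defined and corresponds under Lazard to $\alpha X \in L$. Its action on $N$ is therefore
\[\exp(\alpha D) = \exp(\alpha\beta)\,\id_{L_N} = \exp(\log(1+p^{s-1}))\,\id_{L_N} = (1+p^{s-1})\,\id_{L_N},\]
as required. I expect the main obstacle to be bookkeeping rather than anything substantial: one must carefully invoke the facts that, under the Lazard correspondence, conjugation by $\exp(X)$ on a normal abelian subgroup is implemented by $\exp(\ad(X))$, and that saturability (with $p$ odd) is precisely what guarantees $\ad(X) \in p\cdot\operatorname{End}(L_N)$ and hence the convergence of the $\exp$/$\log$ conversions. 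With those in place, the remainder is straightforward manipulation of $\log(1+p^s)$.
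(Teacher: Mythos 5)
Your overall strategy is sound and is in substance the same as the paper's (pass to the saturable Lie algebra and reduce to $p$-adic bookkeeping with $\log(1+p^s)$), and your exp/log computations are correct; but there is one genuine gap, and it sits exactly at the crux of the lemma. You assert that ``saturability (together with $p$ odd) forces $D = \ad(X)|_{L_N} \in p\cdot\operatorname{End}(L_N)$.'' That is false as a general principle: the condition $[L,L]\subseteq pL$ (equivalently, all adjoint operators landing in $p\cdot\operatorname{End}$) characterizes \emph{uniform} (powerful) Lie lattices, not saturable ones. Saturable algebras satisfy only the weaker potent-filtration axioms, and there exist saturable non-uniform examples --- for instance the Lie algebra of $SL_1^1(\Delta_p)$, the very group discussed at the end of Section~3 of this paper --- in which $[L,L]\not\subseteq pL$. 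Since your entire derivation of $s\geq 2$ consists of ``the requirement $\beta\in p\Z_p$,'' and that requirement rests on this false claim, the first (and essential) assertion of the lemma is unproved as written.

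The gap is local and repairable, and the repair is precisely where the paper places the weight of its own proof: a saturable Lie algebra is \emph{residually nilpotent} (a potent filtration $(L_i)$ satisfies $\gamma_i(L)\subseteq L_i$ and $\bigcap_i L_i = 0$). Note first that your computation up to $D=\beta\,\id_{L_N}$ with $v_p(\beta)=s-1$ needs no hypothesis on $D$ beyond $pD\in p\cdot\operatorname{End}(L_N)$, which is automatic since $D$ is a $\Z_p$-endomorphism of $L_N$; so $\exp(pD)$ converges and $pD=\log(1+p^s)\id_{L_N}$ stands on its own. Now if $s=1$, then $\beta$ is a unit and $\gamma_n(L)=L_N$ for all $n\geq 2$, contradicting residual nilpotence (equivalently, the potent axiom yields $\gamma_p(L)\subseteq pL$, i.e.\ $\beta^{p-1}\in p\Z_p$, forcing $v_p(\beta)\geq 1$). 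This gives $s\geq 2$, whence $\alpha D\in p\cdot\operatorname{End}(L_N)$ and your final step $\exp(\alpha D)=\exp\bigl(\log(1+p^{s-1})\bigr)\id_{L_N}=(1+p^{s-1})\id_{L_N}$ is valid; you should also reorganize so that only $\exp(pD)$ and $\exp(\alpha D)$ are ever exponentiated, since the convergence of $\exp(D)$ itself was being justified by the same false claim (the identity ``conjugation by $x$ is $e^{\ad X}$'' in a saturable group is fine, but it is delivered by Lazard's convergence estimates, not by $D\in p\cdot\operatorname{End}$). With this fix your argument coincides with the paper's, which compares bases of $L_H\subseteq L_G$ for the uniform subgroup $H=\langle x^p\rangle\ltimes N$, reads off $[\bar y_i,\bar x]_{\mathrm{Lie}}=p^{s-1}\bar y_i$, and rules out $s-1=0$ by exactly this residual-nilpotence argument.
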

\begin{proof}
Note that the maximal subgroup $H:=\langle x^{p} \rangle \ltimes N$ of $G$ is uniform, and therefore saturable. Consider the $\mathbb{Z}_p$-Lie algebras $L_G$ and $L_H$ associated to $G$ and $H$, respectively. Then,  $L_H$ is a  maximal subalgebra of $L_G$, and we can  choose elements 
$y_1, y_2, \ldots ,y_{d}$ from $N$ such that $\{ \bar{x}, \bar{y}_1, \bar{y}_2, \ldots, \bar{y}_{d}\}$ is a basis for $L_G$ and 
$\{ p\bar{x}, \bar{y}_1, \bar{y_2},\ldots ,\bar{y}_{d}\}$ is a basis for $L_H$. Furthermore, we can assume that for each $1 \leq i \leq d$, we have 
${[\bar{y}_i, p\bar{x}]}_{\textrm{Lie}} = p^s \bar{y}_i$. Hence, we must have $ {[\bar{y}_i, \bar{x}]}_{\textrm{Lie}} = p^{s-1} \bar{y}_i$.
Moreover, $s-1$ can not be $0$, since in that case $L_G$ would not be residually-nilpotent. Therefore, for some suitable unit 
$\alpha \in \mathbb{Z}_p^*$, the element $x^{\alpha}$ acts on $N$  as scalar multiplication by $1+p^{s-1}$.
\end{proof}

\begin{lem}\label{dimension p-1}
Let $G = \langle x \rangle \ltimes N$, where $\langle x \rangle \cong \Z_p$ and $N \cong \Z_p^{d}$  for some $d \geq 1$. 
Suppose that $x^p$ acts on $N$ as scalar multiplication by $1+p^s$ for some $s \geq 1$ if $p>2$,
and as scalar multiplication by $1 + 2^s$ for some $s \geq 2$ if $p=2$. If  $p + 2  \leq \mathrm{dim}(G)$, then $d(G) > 2$.
 \end{lem}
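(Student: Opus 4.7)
The plan is to compute $d(G)$ explicitly in terms of the action of $x$ on $N$, and then use the hypothesis on $x^p$ together with an elementary Jordan-form argument. Write the action of $x$ on $N\cong\Z_p^d$ multiplicatively, via a matrix $T\in\mathrm{GL}_d(\Z_p)$; note that only the stronger statement $T^p=(1+p^s)I$ is given, and $T$ itself need not be scalar. A direct commutator computation yields $[x,n]=(I-T)n$, so $[G,G]=(T-I)N$. Combining this with $n^p=pn$ for $n\in N$ and the standard identity $(xn)^p=x^p\cdot(I+T+\cdots+T^{p-1})n$, one sees that $G^{ab}\cong \Z_p\oplus (N/(T-I)N)$ and therefore
\[
G/\Phi(G)\;\cong\;\F_p\oplus\bigl(N\big/((T-I)N+pN)\bigr).
\]

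Next, I would reduce modulo $p$. Let $\bar T\in\mathrm{GL}_d(\F_p)$ be the reduction of $T$; then
\[
N\big/\bigl((T-I)N+pN\bigr)\;\cong\;\mathrm{coker}(\bar T-I),
\]
and by rank-nullity $\dim_{\F_p}\mathrm{coker}(\bar T-I)=\dim_{\F_p}\ker(\bar T-I)$, whence
\[
d(G)\;=\;1+\dim_{\F_p}\ker(\bar T-I).
\]
Since $T^p=(1+p^s)I$ and $s\geq 1$, we have $\bar T^p=I$ in $\mathrm{GL}_d(\F_p)$, so the minimal polynomial of $\bar T$ divides $X^p-1=(X-1)^p$. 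Therefore every Jordan block of $\bar T$ (necessarily with eigenvalue $1$) has size at most $p$.

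Finally, recall that $\dim_{\F_p}\ker(\bar T-I)$ equals the number of Jordan blocks of $\bar T$. As these blocks each have size at most $p$ and their sizes sum to $d$, the number of blocks is at least $\lceil d/p\rceil$. The hypothesis $\dim(G)=1+d\geq p+2$ gives $d\geq p+1$, so $\lceil d/p\rceil\geq 2$, and hence
\[
d(G)\;=\;1+\dim_{\F_p}\ker(\bar T-I)\;\geq\;3\;>\;2.
\]

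There is no real obstacle beyond setting up the Frattini quotient correctly; the only subtlety is remembering that $T$ itself is not assumed to be scalar, so the argument has to be Jordan-theoretic rather than a direct eigenvalue computation. This is precisely why the bound $d\geq p+1$ is needed: a single Jordan block of size $\leq p$ would give $d(G)=2$, and only once $d$ strictly exceeds $p$ is one forced to split into at least two blocks.
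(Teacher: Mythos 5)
Your proof is correct, and it takes a genuinely different route from the paper's. The paper argues by contradiction: if $d(G)=2$, then $G=\langle x,y\rangle$ for some $y\in N$, so $N$ is generated as a pro-$p$ group by the $p$ conjugates $x^{-i}yx^{i}$, $0\le i\le p-1$ (because $x^{p}$ centralizes $N$ modulo $\Phi(N)=N^{p}$), giving $d(N)\le p<p+1\le d$, a contradiction. You instead compute the Frattini quotient exactly, obtaining $d(G)=1+\dim_{\F_p}\ker(\bar T-I)$, identify $\dim_{\F_p}\ker(\bar T-I)$ with the number of Jordan blocks of the unipotent matrix $\bar T$, and bound each block size by $p$ using $\bar T^{p}=I$. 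Both arguments pivot on the same key fact --- the hypothesis on $x^{p}$ forces $x$ to act on $N/pN$ by a matrix whose $p$-th power is the identity --- but yours packages it as linear algebra over $\F_p$ rather than a generation count; indeed, the paper's proof is essentially the contrapositive of the special case of yours asserting that a cyclic $\F_p[\bar T]$-module (a single Jordan block) has dimension at most $p$. What your version buys is the sharper quantitative statement $d(G)=1+\dim_{\F_p}\ker(\bar T-I)\ge 1+\lceil d/p\rceil$, which recovers the lemma at $d\ge p+1$ and makes transparent why the threshold $p+2\le\dim(G)$ is exactly right; it also sidesteps the paper's (easy but unproved) reduction to $G=\langle x,y\rangle$ with $y\in N$. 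The paper's version is shorter and avoids setting up $G^{ab}$ and Jordan forms. Your auxiliary steps all check out: $[G,G]=(T-I)N$ since that submodule is $T$-invariant with abelian quotient; $G^{ab}\cong\Z_p\oplus N/(T-I)N$ because the projection $G\to G/N\cong\Z_p$ splits off the $\Z_p$ factor; and the Jordan form of $\bar T$ exists over $\F_p$ since its minimal polynomial divides $(X-1)^{p}$. (One phrasing nit: you call $T^{p}=(1+p^{s})I$ the ``stronger'' statement, when you mean it is the \emph{only} hypothesis, weaker than scalarity of $T$ itself; the mathematics is unaffected, and the $p=2$, $s\ge 2$ case goes through verbatim.)
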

\begin{proof}
Suppose that $p + 2  \leq \textrm{dim}(G)$  and $d(G) = 2$.  Then $G = \langle x, y \rangle$ for some $ y \in N$. 
From ${(x^p)}^{-1}yx^p = y^{1+p^s}$, we get that the set  $T = \{ x^{-i}yx^i ~ | ~  0  \leq i \leq p-1  \}$ generates $N$. Hence,  $d(N) \leq |T| \leq p <   p+1 \leq d=d(N)$, which yields a contradiction. 
\end{proof}

\begin{lem}\label{dimension p}
Let $p$ be an odd prime, and let $G = \langle x \rangle \ltimes N$, where $\langle x \rangle \cong \Z_p$ and $N \cong \Z_p^{p-1}$, be a Frattini-injective pro-$p$ group. Suppose that $x^p$ acts on $N$ as scalar multiplication by $ 1+p^s$ for some $s \geq 1$. 
Then $d(G) > 2$.
\end{lem}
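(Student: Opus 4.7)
The plan is to argue by contradiction: assuming $d(G)=2$, I will exhibit two distinct maximal subgroups of $G$ with the same Frattini subgroup. Write $G=\langle x,y\rangle$; since $G/N$ is cyclic we may replace $y$ by an appropriate $x$-multiple so that $y\in N$. Let $A\in\mathrm{Aut}(N)$ denote the conjugation action of $x$ on $N$, so $A^p=(1+p^s)I$. Normality of $N$ gives $N$ as the $\Z_p$-span of $\{y,Ay,\ldots,A^{p-2}y\}$ (the relation $A^py=(1+p^s)y$ reduces all higher powers), so $y$ is a cyclic vector for $A$, its minimal polynomial $m(T)$ equals the characteristic polynomial, and hence has degree $p-1$ and divides $T^p-(1+p^s)$.

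First I dispose of $s=1$: by Capelli's theorem, $T^p-(1+p)$ is irreducible over $\Q_p$, since $1+p\notin(\Z_p^\times)^p=\mu_{p-1}\cdot(1+p^2\Z_p)$. This leaves no monic divisor of degree $p-1$, contradicting the existence of $m(T)$. So $s\geq 2$, and Hensel's lemma produces $\alpha\in 1+p\Z_p$ with $\alpha^p=1+p^s$; then $T^p-\alpha^p=(T-\alpha)f(T)$ with $f$ irreducible of degree $p-1$, forcing $m(T)=f(T)$. Consequently $N\cong\Z_p[\zeta_p]$ as $\Z_p[A]$-module, with $A$ acting as multiplication by $\alpha\zeta_p$. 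Setting $\pi=1-\zeta_p$ (a uniformizer, with $p=u_0\pi^{p-1}$), a direct check gives $v_\pi(A-I)=1$, hence $[G,G]=(A-I)N=\pi N$ and $\Phi(G)=\langle x^p\rangle\ltimes\pi N$.

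Since $G/\Phi(G)\cong\F_p^2$, the maximal subgroups of $G$ are in bijection with projective lines in $\F_p^2$. I will focus on the $p$ distinct maximal subgroups $M_c:=\langle xy^c,\Phi(G)\rangle$ for $c\in\{0,1,\ldots,p-1\}$. The identity $(xy^c)^p = c\,x^p\, p^s(A-I)^{-1} = x^p\cdot c u_2 \pi^{s(p-1)-1}$ (for a unit $u_2\in\Z_p[\zeta_p]^\times$) shows that $x^p\in M_c$, so $M_c=\langle xy^c\rangle\ltimes\pi N$ and $[M_c,M_c]=(A-I)\pi N=\pi^2 N$.

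The decisive estimate is $s(p-1)-1\geq 2(p-1)-1\geq 3$, valid for $s\geq 2$ and $p\geq 3$, which gives $cu_2\pi^{s(p-1)-1}\in\pi^2 N$ and hence $(xy^c)^p\equiv x^p\pmod{[M_c,M_c]}$. A parallel $\pi$-adic estimate of $R_k(A):=(A^{kp}-I)(A^k-I)^{-1}$, distinguishing $k\in\Z_p^\times$ (where one factors out $A-I$) from $k\in p\Z_p$ (where $A^k$ is a scalar and one uses the ramification $p=u_0\pi^{p-1}$), shows that every $p$-th power of an element of $M_c$ lies in $\langle x^p\rangle\cdot\pi^2 N$. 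Combining, $\Phi(M_c)=\langle x^p\rangle\ltimes\pi^2 N$, independent of $c$. Thus the $p$ distinct subgroups $M_0,\ldots,M_{p-1}$ share a common Frattini, contradicting Frattini-injectivity. The hardest step will be the uniform valuation estimate on $R_k(A)$; it is precisely here that the hypothesis $\dim(N)=p-1$ enters, via the fact that $p$ has $\pi$-valuation exactly $p-1$ in $\Z_p[\zeta_p]$.
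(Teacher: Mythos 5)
Your proposal is correct, but it takes a genuinely different route from the paper's. The paper also assumes $d(G)=2$ with $G=\langle x,y\rangle$, $y\in N$, but then works entirely in coordinates: setting $y_i=x^{-(i-1)}yx^{i-1}$, it solves the resulting system of exponent equations for the coefficients $\alpha_i$ of $x^{-1}y_{p-1}x$, finding $\alpha_i=(-1)^i\omega^{p-i}$ with $-\omega^p=1+p^s$ (your Hensel root $\alpha$ is $-\omega$, and the insolubility of this equation for $s=1$ is the paper's counterpart of your Capelli step). It then exhibits just two subgroups, $H=\langle x,y_1^p\rangle=\langle x\rangle\Phi(N)$ and $K=\langle xy_2,y_1^p\rangle$, and verifies $\Phi(H)=\Phi(K)$ by a long explicit expansion of $(xy_2)^p$, telescoping against the elements $y_i^{-p}y_{i+1}^p$ and a congruence computation showing $p^2$ divides the residual exponent $\gamma$. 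You instead convert cyclicity of $N$ as a $\Z_p[A]$-module into the identification $\Z_p[A]\cong\Z_p[\zeta_p]$ with $A$ acting as multiplication by $\alpha\zeta_p$, after which the entire Frattini computation becomes uniform $\pi$-adic valuation bookkeeping: you get $p$ \emph{maximal} subgroups $M_0,\ldots,M_{p-1}$ sharing the Frattini $\langle x^p\rangle\ltimes\pi^2N$, where the paper gets two subgroups of much larger index. Your approach buys structural transparency --- the inequalities $s(p-1)-1\geq 2$ and $v_\pi(p)=p-1\geq 2$ replace the paper's page of word manipulation, and they make visible why $\dim N=p-1$ (total ramification) is the critical case; the paper's computation buys self-containedness, using nothing beyond $p$th-power criteria in $\Z_p$.

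Two small points. The a priori spanning set is $\{y,Ay,\ldots,A^{p-1}y\}$: the relation $A^py=(1+p^s)y$ reduces $A^jy$ only for $j\geq p$, not $A^{p-1}y$. This is harmless, since all you use is that $y$ is a cyclic vector, from which $\deg m(T)=p-1$ follows. And your deferred estimate does close as sketched: for $k\in\Z_p^\times$, $v_\pi(\alpha^k\zeta_p^k-1)=1$ because $v_\pi(\alpha^k-1)\geq(s-1)(p-1)\geq 2$, giving $v_\pi(R_k(A))=s(p-1)-1\geq 3$; for $k\in p\Z_p\setminus\{0\}$, $A^k$ is the scalar $(1+p^s)^{k/p}$ and $v_\pi(R_k(A))=p-1\geq 2$, with $k=0$ handled by $v_\pi(p)=p-1$. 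Combined with $z^{kp}\in x^{kp}\pi^{s(p-1)-1}N$ for $z=xy^c$, this gives $M_c^p\subseteq\langle x^p\rangle\pi^2N$ and hence $\Phi(M_c)=\langle x^p\rangle\ltimes\pi^2N$ for every $c$, as you claim.
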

\begin{proof}
Suppose by way of contradiction that $d(G) = 2$. Then $G = \langle x, y \rangle$ for some $ y \in N$. For $1 \leq i \leq p$,  set $y_i := x^{-(i-1)}yx^{i-1} $.
Since ${(x^p)}^{-1}yx^p = y^{1 + p^s}$, the set $\{y_i \mid 1 \leq i \leq p\}$ generates $N$. Moreover,  after (possibly) replacing  $y$ by $x^{-(i-1)}yx^{i-1}$ for a suitable $1 \leq i \leq p$, 
we may assume that $\{y_1, y_2, \ldots, y_{p-1}\}$ is a basis for $N$. 

Let $\alpha_1, \alpha_2, ..., \alpha_{p-1} \in \mathbb{Z}_p$ be such that $ x^{-1}y_{p-1}x = y_1^{\alpha_1}  y_2^{\alpha_2} \cdots    y_{p-1}^{\alpha_{p-1}}.$  Then
\begin{align*}\label{identity p}
 y_1^{1 + p^s} &= y_1^{x^p} = {(y_{p-1})}^{x^2}= {(y_1^x)}^{\alpha_1}  {(y_2^x)}^{\alpha_2}   \cdots  {(y_{p-2}^x)}^{\alpha_{p-2}} {(y_{p-1}^x)}^{\alpha_{p-1}}\\
 &= {y_2}^{\alpha_1}{y_3}^{\alpha_2}   \cdots   {y_{p-1}}^{\alpha_{p-2}} {({y_1}^{\alpha_1}{y_2}^{\alpha_2}  \cdots {y_{p-1}}^{\alpha_{p-1}})}^{\alpha_{p-1}}\\
 &= {y_1}^{\alpha_1\alpha_{p-1}}{y_2}^{\alpha_1 + \alpha_2\alpha_{p-1}}{y_3}^{\alpha_2 + \alpha_3\alpha_{p-1}}  \cdots  {y_{p-2}}^{\alpha_{p-3} + \alpha_{p-2}\alpha_{p-1}} {y_{p-1}}^{\alpha_{p-2} + \alpha_{p-1}^2}.
\end{align*}
By comparing exponents, we get the following relations:

\begin{equation}\label{system}
\left\{ 
\begin{array}{lcr}
1 + p^s = \alpha_1 \alpha_{p-1} \\ 
0 = \alpha_1 + \alpha_2 \alpha_{p-1} \\ 
%0 =  \alpha_2 + \alpha_3 \alpha_{p-1}  \\ 
\vdots \\ 
0= \alpha_i + \alpha_{i+1} \alpha_{p-1}  \\ 
\vdots  \\ 
0= \alpha_{p-3} + \alpha_{p-2} \alpha_{p-1}  \\ 
0 = \alpha_{p-2} + \alpha_{p-1}^2. 
\end{array}
\right. 
\end{equation}
It readily follows that $\alpha_i  = {(-1)}^i\alpha_{p-1}^{p-i}$ for $1 \leq i \leq p-2$ and $-\alpha_{p-1}^p  = 1 + p^s $. 
If $s=1$, the equation ${-(\alpha_{p-1})}^p  = 1 + p^s$ does not have a solution in $\mathbb{Z}_p$. Hence, we may assume that $s \geq 2$, in which case, there is a unique $\omega \in \mathbb{Z}_p$ such that $-{w}^p  = 1 + p^s$. It follows that $\alpha_i  = {(-1)}^i{ \omega}^{p-i}$ for $1 \leq i \leq p-1$. 

Consider the open subgroup $H:= \langle x, y_1^p \rangle=\langle x \rangle\Phi(N)$ of $G$. We have that $ y_1^{-p}y_2^p=[y_1^p, x] \in \Phi(H)$, and it follows by induction that
$$y_{i+1}^{-p}y_{i+2}^p=y_i^{-p}y_{i+1}^p [y_i^{-p}y_{i+1}^p, x] \in \Phi(H)  \textrm{ for every } 1 \leq i < p-2.$$
Since $\langle y_1^{-p}y_2^p, y_2^{-p}y_3^p, \ldots ,y_{p-2}^{-p}y_{p-1}^p, y_1^{p^2}  \rangle$ has index $p$ in $\Phi(N)$, it follows
that 
$$\Phi(H) =  \langle  x^p, y_1^{-p}y_2^p, y_2^{-p}y_3^p, \ldots ,y_{p-2}^{-p}y_{p-1}^p, y_1^{p^2}   \rangle.$$
Next, consider the open subgroup $K := \langle xy_2, y_1^p \rangle$ of $G$. Observe that $K \neq H$.  For every $ \tilde{y} \in N $, we have $[ \tilde{y}, xy_2] = [ \tilde{y}, x]$. Hence, 
as for the subgroup $H$, we can deduce that $y_{i+1}^{-p}y_{i+2}^p  \in  \Phi(K)$ for every $1 \leq i < p-2$. 
Using the identities $y_i^{\beta}x = xy_{i+1}^{\beta}$  ($1 \leq i \leq p-2$) and $y_{p-1}^{\beta} x = x{(y_1^{\alpha_1}  y_2^{\alpha_2} \cdots    y_{p-1}^{\alpha_{p-1}})}^{\beta}$, we obtain
\begin{align*}
{(xy_2)}^p &= x^2y_3xy_3 \cdots xy_3y_2 = x^3y_4xy_4 \cdots xy_4y_3y_2  = \cdots  = x^{p-2}y_{p-1}xy_{p-1}x(y_{p-1} \cdots y_3 y_2)\\ 
&=x^{p-1}y_1^{\alpha_1}  y_2^{\alpha_2} \cdots  y_{p-2}^{\alpha_{p-2}}  (y_{p-1}^{\alpha_{p-1}}x)y_1^{\alpha_1}  y_2^{\alpha_2} \cdots    y_{p-1}^{\alpha_{p-1}}(y_{p-1} \cdots y_3 y_2)  \\ 
&= x^{p-1}y_1^{\alpha_1}  y_2^{\alpha_2} \cdots  y_{p-2}^{\alpha_{p-2}} x {(y_1^{\alpha_1}  y_2^{\alpha_2} \cdots    y_{p-1}^{\alpha_{p-1}})}^{\alpha_{p-1}}y_1^{\alpha_1}  y_2^{\alpha_2} \cdots    y_{p-1}^{\alpha_{p-1}}(y_{p-1} \cdots y_3 y_2)  \\ 
&= x^py_2^{\alpha_1}  y_3^{\alpha_2} \cdots  y_{p-1}^{\alpha_{p-2}}{(y_1^{\alpha_1}  y_2^{\alpha_2} \cdots    y_{p-1}^{\alpha_{p-1}})}^{1+\alpha_{p-1}}(y_{p-1}y_{p-2} \cdots y_3 y_2)   \\
&= x^p{y_1}^{\alpha_1 + \alpha_1\alpha_{p-1}}{y_2}^{1 + \alpha_1  + \alpha_2 + \alpha_2\alpha_{p-1}}{y_3}^{1 + \alpha_2  + \alpha_3 + \alpha_3\alpha_{p-1}} \cdots {y_{p-1}}^{1 + \alpha_{p-2}  + \alpha_{p-1} + \alpha_{p-1}^2}.
\end{align*}
Moreover, it follows from  (\ref{system}) that
$${(xy_2)}^p = x^p{y_1}^{1 + \alpha_1  + p^s}{y_2}^{1 +  \alpha_2} {y_3}^{1 +  \alpha_3}  \cdots   {y_{p-2}}^{1 +  \alpha_{p-2}} {y_{p-1}}^{1 + \alpha_{p-1}}.$$ 
Observe that $\omega \equiv -1 \textrm{ (mod } p\textrm{)}$, and thus $\alpha_i \equiv -1 \textrm{ (mod } p\textrm{)}$ for every $1 \leq i \leq p-1.$
Let $l_i  \in  \Z_p$ be such that $1 +  \alpha_i = pl_i$ ($1 \leq i \leq p-1$). Then
\begin{align*}
{(xy_2)}^p {(y_{p-2}^py_{p-1}^{-p})}^{l_{p-1}} {(y_{p-3}^py_{p-2}^{-p})}^{l_{p-1} + l_{p-2}}  \cdots   {(y_2^py_3^{-p})}^{l_{p-1} + l_{p-2} + \cdots + l_3} {(y_1^py_2^{-p})}^{l_{p-1} + l_{p-2} + \cdots + l_2}&\\ 
= x^py_1^{( \alpha_1 + 1 + p^s) + (1 +  \alpha_2) + (1 +  \alpha_3) +    \cdots      + (1 +  \alpha_{p-2}) + (1 +  \alpha_{p-1})} = x^py_1^{\gamma}  \in  \Phi(K)&,
\end{align*}
where $\gamma=( \alpha_1 + 1 + p^s) + (1 +  \alpha_2) + (1 +  \alpha_3) +    \cdots      + (1 +  \alpha_{p-2}) + (1 +  \alpha_{p-1})$.

For $1 \leq m \leq \frac{p-1}{2}$, we have $1+\alpha_{p-(2m-1)}=1 + {\omega}^{2m-1} =  (1 + \omega) u_m$, where $u_m = 1 - \omega + {\omega}^2 - \cdots -{\omega}^{2m-3} + {\omega}^{2(m-1)}$, and
$1+\alpha_{p-2m} =1 - {\omega}^{2m} = (1 + \omega) v_m$,  where $v_m = (1-\omega)(1 +  {\omega}^2 + {\omega}^4 +  \cdots + {\omega}^{2(m-1)})$.
From  ${(-1)}^i{ \omega}^{p-i}   \equiv -1 ( \textrm{ mod } p)$, we get that
$$u_m \equiv 2m-1 \textrm{ (mod } p\textrm{)} \textrm{ and } v_m  \equiv 2m \textrm{ (mod } p\textrm{)}.$$
Hence, 
\begin{align*}
\tilde{\gamma} &:=  v_{\frac{p-1}{2}} +   u_{\frac{p-1}{2}} +   v_{\frac{p-3}{2}} +   u_{\frac{p-3}{2}} + \cdots + v_1 + u_1 \\
&\equiv (p-1) + (p-2) +  \cdots + 2 + 1 = \frac{(p-1)p}{2} \equiv  0 \textrm{ (mod } p\textrm{)}.
\end{align*}
Since $\gamma = p^s + (1 + \omega)\tilde{\gamma}$ and $s \geq 2$, it follows that $p^2$ divides $\gamma$. Therefore, $x^p \in \Phi(K)$. 
Now it is easy to see that 
$$ \Phi(K) =  \langle  x^p, y_1^{-p}y_2^p, y_2^{-p}y_3^p, ..., y_{p-2}^{-p}y_{p-1}^p, y_1^{p^2}   \rangle  = \Phi(H),$$ 
which yields a contradiction. Hence, we must have $d(G) > 2$.
\end{proof}

\begin{lem}\label{dimension p1}
Let $G = \langle x \rangle \ltimes N$, where $\langle x \rangle \cong \Z_p$ and $N \cong \Z_p^{p}$, be a Frattini-injective pro-$p$ group. 
Suppose that $x^p$ acts on $N$ as scalar multiplication by $ 1+p^s$ for some $s \geq 1$ if $p>2$,
and as scalar multiplication by $1 + 2^s$ for some $s \geq 2$ if $p=2$.  Then $d(G) > 2$.
 \end{lem}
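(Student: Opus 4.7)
The plan is to adapt the strategy of Lemma~\ref{dimension p} to this slightly larger-dimensional setting. Assume for contradiction that $d(G)=2$, so that $G=\langle x,y\rangle$ for some $y\in N$, and set $y_i:=x^{-(i-1)}yx^{i-1}$; the hypothesis that $x^p$ acts on $N$ as the scalar $1+p^s$ forces $y_{p+1}=y_1^{1+p^s}$, so $\{y_1,\ldots,y_p\}$ generates $N$ and, having cardinality $\rank(N)$, is a $\Z_p$-basis of $N\cong\Z_p^p$. I intend to play off the two subgroups $H:=\langle x,y_1^p\rangle$ and $K:=\langle xu,y_1^p\rangle$ for the specific element $u:=y_1y_2\cdots y_p$, and show that $H\ne K$ but $\Phi(H)=\Phi(K)$, contradicting Frattini-injectivity.

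Since $u\in N$ commutes with every element of $N$, conjugation by $xu$ acts on $N$ identically to conjugation by $x$. Consequently $H=\langle x\rangle\Phi(N)$, $K=\langle xu\rangle\Phi(N)$, and $[H,H]=[K,K]$ is topologically generated by $y_i^{-p}y_{i+1}^p$ for $1\le i\le p-1$ together with $y_p^{-p}y_1^{p(1+p^s)}$. A routine calculation in $H/[H,H]\cong\Z_p\oplus\Z/p^s$ then gives
\[ \Phi(H)\cap\Phi(N) \;=\; \Phi(K)\cap\Phi(N) \;=\; [H,H]\cdot\langle y_1^{p^2}\rangle, \]
an index-$p$ sublattice of $\Phi(N)\cong\Z_p^p$ whose quotient is detected by the ``normalized sum-of-coordinates'' functional $\tfrac{1}{p}\Sigma\colon\Phi(N)\twoheadrightarrow\Z/p$. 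The inequality $H\ne K$ is immediate, since each coordinate of $u$ in the basis $\{y_1,\ldots,y_p\}$ equals $1$ and hence $u\notin\Phi(N)$.

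The crux is the computation of $(xu)^p$. Letting $\sigma$ denote conjugation by $x^{-1}$ on $N$, pushing $x$-factors to the left yields
\[ (xu)^p \;=\; x^p\cdot v, \qquad v := \bigl(1+\sigma+\sigma^2+\cdots+\sigma^{p-1}\bigr)(u). \]
Because $\sigma$ acts as a cyclic shift modulo $\Phi(N)$, the image of $v$ in $N/\Phi(N)$ equals $p(\bar y_1+\cdots+\bar y_p)=0$, so $v\in\Phi(N)$. An inductive computation tracking the single ``twist'' introduced by $\sigma$ whenever the index wraps from $y_p$ to $y_1^{1+p^s}$ then yields
\[ \Sigma(v) \;=\; p^2 \,+\, p^s\cdot\tfrac{p(p-1)}{2}, \]
and this is divisible by $p^2$ under either hypothesis on $(p,s)$: for $p$ odd with $s\ge1$ one has $p^{s+1}\cdot\tfrac{p-1}{2}\in p^2\Z_p$, while for $p=2$ with $s\ge2$ one has $p^s=2^s\in 4\Z_2$. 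Hence $v\in\Phi(H)\cap\Phi(N)\subseteq\Phi(K)$, whence $x^p=(xu)^p\,v^{-1}\in\Phi(K)$ and symmetrically $(xu)^p=x^p v\in\Phi(H)$. Combined with the coincidence of the $N$-parts, this gives $\Phi(H)=\Phi(K)$, the desired contradiction.

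The main obstacle is the uniform divisibility of $\Sigma(v)$ by $p^2$; it is most delicate for $p=2$, where one must read the key coefficient as $\tfrac{p(p-1)}{2}=1$ (rather than the non-integral $\tfrac{p-1}{2}$) and invoke the stronger hypothesis $s\ge2$ precisely to secure $2^s\equiv 0\pmod 4$.
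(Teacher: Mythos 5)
Your argument is correct, but it is not the route the paper takes for this lemma: you have transplanted the two-subgroup Frattini-collision technique that the paper uses for Lemma~\ref{dimension p} (there with $K=\langle xy_2,y_1^p\rangle$), replacing the single conjugate by the ``all-ones'' element $u=y_1y_2\cdots y_p$. I verified the key steps: the $\Z_p[\![x]\!]$-submodule generated by $y_1^p$ is all of $N^p$, so $H=\langle x\rangle\ltimes N^p$ and $K=\langle xu\rangle\ltimes N^p$; both have $N$-part of the Frattini equal to $S:=(\sigma-1)N^p\cdot N^{p^2}$, which has index exactly $p$ in $\Phi(N)$ with quotient detected by $\tfrac1p\Sigma$ precisely because $s\geq 1$; and with $\sigma^j(u)=u+p^s(e_1+\cdots+e_j)$ in additive notation, the collection formula gives your value $\Sigma(v)=p^2+p^s\tfrac{p(p-1)}{2}$, divisible by $p^2$ for $p$ odd, $s\geq1$, and for $p=2$, $s\geq2$, so $v\in S$ and $\Phi(H)=\langle x^p\rangle S=\langle x^pv\rangle S=\Phi(K)$ while $u\notin\Phi(N)$ forces $H\neq K$. (One notational slip: for $(xu)^p=x^p\prod_{j}\sigma^j(u)$ you need $\sigma(n)=x^{-1}nx$, i.e.\ conjugation \emph{by} $x$, not by $x^{-1}$; with your stated convention the shifts invert, which changes nothing of substance since the twisted coefficient becomes a unit multiple of $p^s$.) The paper's own proof is different: it works with the basis of iterated commutators $y_{i+1}=[y_i,x]$ and splits into cases. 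For $s\geq2$ it avoids Frattini computations entirely, observing that $H=\langle x,y_1^p,y_2\rangle$ has $d(H)=3$ while its open subgroup $K=\langle x,y_1^p\rangle$ has $d(K)=2$, contradicting Proposition~\ref{monotone}; for $s=1$ (so $p$ odd) it derives the relation for $[y_p,x]$ via the hockey-stick identity and produces a collision between $\langle x,y_p\rangle$ and $\langle xy_{p-1},y_p\rangle$ using the Hall-Petresco congruence $(xy_{p-1})^p\equiv x^p \pmod{\Phi(N)}$. What your approach buys is uniformity: a single computation covering all admissible $(p,s)$, independent of Proposition~\ref{monotone} and of Hall-Petresco, and structurally parallel to Lemma~\ref{dimension p}. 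What the paper's approach buys is economy in the generic case $s\geq2$, where the generator-count contradiction is essentially free, at the cost of a separate and more delicate treatment of $s=1$.
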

\begin{proof}
%We will consider two seperate cases,  $p > 2$ and $p=2$.  
Suppose that $d(G) = 2$. Then $G = \langle x, y \rangle$ for some $ y \in N$, and $\{x^{-i}yx^i \mid  0  \leq i \leq p-1 \}$ is a basis for $N$. 
Set $y_1 := y$ and $y_{i+1} := [y_i, x]$ for $ {1  \leq i \leq p-1}$. Then $\{  y_1, y_2, ..., y_{p}  \}$ is also a basis for $N$. 

It is easy to see that $y_2^{x^k} = y_2^{\binom{k}{0}} y_3^{\binom{k}{1}}   \cdots   y_{i+2}^{\binom{k}{i}} \cdots y_{k+2}^{\binom{k}{k}}$ for every 
$1  \leq k \leq p-2$, and 
\begin{align*}
y_2^{x^{p-1}} &=  {(y_2^{\binom{p-2}{0}} y_3^{\binom{p-2}{1}}   \cdots   y_{i+2}^{\binom{p-2}{i}} \cdots y_{p}^{\binom{p-2}{p-2}})}^x \\
&=y_2^{\binom{p-2}{0}}    y_3^{\binom{p-2}{0} + \binom{p-2}{1}}  \cdots   y_{i+2}^{\binom{p-2}{i-1} + \binom{p-2}{i}} \cdots     y_{p-1}^{\binom{p-2}{p-4} + \binom{p-2}{p-3}}  y_{p}^{\binom{p-2}{p-3}} y_{p}^x\\
&=y_2^{\binom{p-2}{0}}    y_3^{\binom{p-1}{1}}  \cdots   y_{i+2}^{\binom{p-1}{i}} \cdots     y_{p-1}^{\binom{p-1}{p-3}}  y_{p}^{\binom{p-2}{p-3}} y_{p}^x.
\end{align*}
Moreover,
\begin{equation}\label{identity p}
 y_1^{p^s} = [y_1, x^{p}] = [y_1, x] {[y_1, x]}^{x} \cdots  {[y_1, x]}^{x^{p-2}} {[y_1, x]}^{x^{p-1}} = y_2 y_2^{x} \cdots   y_2^{x^{p-2}} y_2^{x^{p-1}}.
\end{equation}
By first expressing each $ y_2^{x^k} $ ($1 \leq k \leq p-1$) in (\ref{identity p}) in terms of the basis $y_1, \ldots, y_p$ of $N$, and then applying the 
hockey-stick identity, $\sum_{i=k}^n \binom{i}{k}=\binom{n+1}{k+1}$, to simplify exponents, we obtain
$ y_1^{p^s} = y_2^{p} y_3^{\binom{p}{2}}   \cdots   y_{i}^{\binom{p}{i-1}} \cdots y_{p-1}^{\binom{p}{p-2}}y_p^{p-1}y_{p}^x$. Hence, 
\begin{equation}\label{power}
[y_{p}, x] =  y_1^{p^s} y_2^{-p} y_3^{-\binom{p}{2}}   \cdots   y_{i}^{-\binom{p}{i-1}} \cdots y_{p-1}^{-\binom{p}{p-2}}y_p^{-p}.
\end{equation} 

\emph{Case 1:} $s \geq 2$. Consider the open subgroup $H :=   \langle x, y_1^{p}, y_2 \rangle$ of $G$. It is not difficult to see that $d(H)=3$.   
However, $K:=\langle x, y_1^{p} \rangle$ is an open subgroup of $H$ with $d(K)< d(H)$, which  contradicts Proposition~\ref{monotone}.

\emph{Case 2:} $s=1$ (and thus $p > 2$). Consider the subgroups $H:= \langle x, y_{p} \rangle$ and ${K:= \langle xy_{p-1}, y_p \rangle}$ of $G$. Observe that $H \neq K$.
It is straightforward to see that 
\[\Phi(N)=\langle [y_p,x], [[y_p,x], x], \ldots, [y_p,_p x] \rangle=\langle [y_p,xy_{p-1}], \ldots, [y_p,_p xy_{p-1}] \rangle. \]
Hence, $\Phi(N)$ is a subgroup of both $\Phi(H)$ and $\Phi(K)$. Moreover, it is not difficult to see that $\Phi(H)=\langle x^p, \Phi(N) \rangle$.
Since $\gamma_3(\langle x, y_{p-1} \rangle) \leq \Phi(N)$, it follows from the Hall-Petresco formula that
\[(xy_{p-1})^p \equiv x^p \textrm{ (mod } \Phi(N) \textrm{)}.\]
This implies that $\Phi(K)=\langle x^p, \Phi(N) \rangle =\Phi(H)$, a contradiction.
\end{proof}

\begin{lem}\label{hereditarily 2 dim}
Let $G=\langle x \rangle \ltimes \langle y \rangle$, where $\langle x \rangle \cong \langle y \rangle \cong \Z_2$ and $x$ acts on $\langle y \rangle$ either as scalar multiplication by $ - (1 + 2^s)$ for some $s \geq 2$ or by inversion. Then $G$ is not Frattini-injective.
\end{lem}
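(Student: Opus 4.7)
The plan is to exhibit two distinct finitely generated (in fact, index-$2$) subgroups of $G$ sharing the same Frattini subgroup, so that a single construction handles both cases $\lambda=-1$ and $\lambda=-(1+2^s)$ with $s\geq 2$.

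First I would compute $\Phi(G)$. In both cases $\lambda$ is an odd $2$-adic integer, so for every $a,b\in\mathbb{Z}_2$,
\[(x^ay^b)^2=x^{2a}\,y^{b(1+\lambda^a)}\]
lies in $\langle x^2,y^2\rangle$, because $1+\lambda^a$ is even. Combined with
\[[x,y]=y^{1-\lambda}\in\langle y^2\rangle,\]
this shows $\Phi(G)=G^2[G,G]=\langle x^2,y^2\rangle$, so $G/\Phi(G)\cong(\mathbb{Z}/2)^2$ and $d(G)=2$.

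Next, I set $H=\langle x,y^2\rangle$ and $K=\langle xy,y^2\rangle$. Both contain $\Phi(G)$ and thus have index $2$ in $G$. Their images in $G/\Phi(G)\cong(\mathbb{Z}/2)^2$ are the subgroups generated by $(1,0)$ and $(1,1)$ respectively, so $H\neq K$. It remains to compute $\Phi(H)$ and $\Phi(K)$ and see they coincide. The key identities are
\[(xy)^2=x\cdot(yx)\cdot y=x^2\,y^{\lambda+1},\qquad [x,y^2]=[xy,y^2]=y^{2(1-\lambda)}.\]
If $\lambda=-1$, then $\lambda+1=0$ and $2(1-\lambda)=4$, so both $y^{\lambda+1}$ and $y^{2(1-\lambda)}$ lie in $\langle y^4\rangle$. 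If $\lambda=-(1+2^s)$ with $s\geq 2$, then $\lambda+1=-2^s$ and $2(1-\lambda)=4+2^{s+1}$, and again both exponents are divisible by $4$, so once more $y^{\lambda+1},y^{2(1-\lambda)}\in\langle y^4\rangle$. Therefore, in both cases,
\[\Phi(H)=\langle x^2,\,(y^2)^2,\,[x,y^2]\rangle=\langle x^2,y^4\rangle,\]
\[\Phi(K)=\langle (xy)^2,\,(y^2)^2,\,[xy,y^2]\rangle=\langle x^2y^{\lambda+1},\,y^4,\,y^{2(1-\lambda)}\rangle=\langle x^2,y^4\rangle.\]
Hence $\Phi(H)=\Phi(K)$ while $H\neq K$, proving $G$ is not Frattini-injective.

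There is no real obstacle; the only mildly delicate point is the arithmetic check that $\lambda+1$ and $2(1-\lambda)$ are both divisible by $4$, which is precisely where the hypothesis $s\geq 2$ (rather than $s\geq 1$) enters, and which mirrors the exceptional behavior of $p=2$ seen throughout the paper.
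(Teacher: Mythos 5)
Your proof is correct, and it is in essence the paper's construction with a small but genuine unification. For the scalar action by $-(1+2^s)$ the paper takes $H=\langle x, y^{2^{s-1}}\rangle$ and $K=\langle xy, y^{2^{s-1}}\rangle$, whose Frattinis $\langle x^2, y^{2^s}\rangle$ and $\langle (xy)^2, y^{2^s}\rangle$ coincide because $(xy)^2=x^2y^{-2^s}$; the inversion case is then dispatched separately with the cyclic subgroups $\langle x\rangle$ and $\langle xy\rangle$, using $(xy)^2=x^2$. Your uniform pair $H=\langle x, y^2\rangle$, $K=\langle xy, y^2\rangle$ exploits exactly the same identity $(xy)^2=x^2y^{\lambda+1}$, but the single divisibility observation $4\mid \lambda+1$ (which is where $s\geq 2$ enters, and which degenerates to $\lambda+1=0$ in the inversion case) lets one pair of index-$2$ subgroups handle both actions at once. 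What you gain is one two-line verification in place of two separate cases; what the paper gains in the inversion case is extra economy (distinct \emph{cyclic} subgroups with equal Frattinis, no Frattini computations needed). Your supporting computations are sound: since $\Phi(H)=H^2$ for pro-$2$ groups, your displayed square formula $(x^ay^{b})^2=x^{2a}y^{b(1+\lambda^a)}$, applied inside $H$ and $K$ (where $xy$ acts on $y^2$ by the same scalar $\lambda$), rigorously pins down $\Phi(H)=\Phi(K)=\langle x^2, y^4\rangle$, and the commutator $[x,y^2]=[xy,y^2]=y^{2(1-\lambda)}$ is indeed redundant there, as $2(1-\lambda)\in 4\Z_2$ in both cases.
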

\begin{proof}
Suppose first that $x$ acts on $\langle y \rangle$ as scalar multiplication by $ - (1 + 2^s)$ for some $s \geq 2$.
Consider the subgroups $H= \langle x, y^{2^{s-1}} \rangle$ and  $K= \langle xy, y^{2^{s-1}} \rangle$ of $G$. Obviously, $H \neq K$.
Moreover, $\Phi(H)=\langle x^2, y^{2^s} \rangle$ and $\Phi(K)=\langle (xy)^2, y^{2^s} \rangle$. Since 
$(xy)^2=x^2y^{-2^s}$, it follows that $\Phi(H)=\Phi(K)$. Therefore, $G$ is not Frattini-injective.

Now suppose that $x$ acts on $\langle y \rangle$ by inversion. Then $(xy)^2=x^2$, and thus
$\Phi(\langle x \rangle) = \Phi(\langle xy \rangle)$. Hence, in this case also $G$ is not Frattini-injective.
\end{proof}

\medskip

\begin{proof}[Proof of Theorem~\ref{main}]
One implication follows from Proposition~\ref{hereditarily}. For the other implication, suppose that $G$ is Frattini-injective.  By Proposition~\ref{virtually}, $G$ contains an open hereditarily uniform subgroup $U$. 
If $U$ is abelian, then by Corollary~\ref{virtually-abelian} $(i)$, $G \cong \mathbb{Z}_p^d$.  Hence, we may assume that $U =\langle y \rangle \ltimes N$, where  $\langle y \rangle \cong \Z_p$, $N \cong \Z_p^{d-1}$, and $y$ acts on $N$ as scalar multiplication by
$\lambda = 1+p^s$ for some $s \geq 1$ (or $s \geq 2$ if $p=2$).

We proceed by induction on $|G:U |$. If $G=U$, there is nothing to prove; so, suppose that $|G:U |\geq p $. In fact, running along a subnormal series from $U$ to 
$G$, it suffices to consider the case $|G:U| = p$. 

Let $x \in G \setminus U$; then $G = \langle x \rangle U$ and $x^p \in U$. Note that $N$ is the isolator of the commutator subgroup $[U, U]$ of $U$. Hence, $N$ is a characteristic subgroup of $U$. 
Since $U$ is normal in $G$, it follows that $N$ is also normal in $G$. 

Consider the group $K = \langle x \rangle N$. We consider two separate cases: $x^p \in N$ and $x^p \notin N$.

\emph{Case 1:} $x^p \in N$. Then, $K$ is a Frattini-injective pro-$p$ group that contains an abelian subgroup $N$ of index $p$. By Corollary~\ref{virtually-abelian} (i), $K \cong \Z_p^{d-1}$. 
From $\Phi( \langle y^{-1}xy \rangle ) = \langle y^{-1}x^py \rangle =  \langle {(x^p)}^{ \lambda}  \rangle  = \Phi( \langle x^{ \lambda} \rangle )$, it follows that $y^{-1}xy \in  \langle x \rangle$. 
Moreover, we have that ${(x^{ \lambda})}^p = {(x^p)}^{ \lambda} =  y^{-1}x^py =  {(y^{-1}xy)}^p$. By Corollary~\ref{virtually-abelian}~$(iv)$, $y^{-1}xy = x^{ \lambda}$. Therefore,  $y$ acts on $K$ as scalar multiplication by $\lambda$ and $G=\langle y \rangle  \ltimes K \cong U$ is of the required form. 

\emph{Case 2:} $x^p \notin N$.  Then,  $x^p = y^{p^k}w$ for some $k \in \mathbb{N}$ and $w \in N$. 

\emph{Subcase 2.1:} $p$ is odd and $k \geq 1$. Since $N$ is characteristic in $U$, conjugation by $x$ induces an action on $U/N \cong \mathbb{Z}_p$. 
Moreover, as $\textrm{Aut}(\mathbb{Z}_p) \cong C_{p-1} \times \mathbb{Z}_p$, this action must be trivial. Put $z := x^{-1}y^{p^{k-1}}$; then 
$z^p = {(x^{-1}y^{p^{k-1}})}^p \equiv x^{-p}y^{p^{k}}  \equiv 1  \textrm{ (mod } N \textrm{)}$. 
Hence,  $z^p \in N$, and after replacing $x$ by $z$, we return to Case 1.  

\emph{Subcase 2.2:} $p=2$ and $k \geq 1$. Since $\textrm{Aut}(\mathbb{Z}_2) \cong C_{2} \times \mathbb{Z}_2$, either 
$y^x \equiv y \textrm{ (mod } N \textrm{)}$ or $y^x  \equiv y^{-1} \textrm{ (mod } N \textrm{)}$. We contend that the latter case does not occur.  Indeed, suppose that  $y^x  \equiv y^{-1} \textrm{ (mod } N \textrm{)}$; thus $y^x  = y^{-1}n_0$ for some $n_0 \in N$. Then, for every $n\in N$, we have
$$  {(n^x)}^{1 +  2^s} =  {(n^{1 +  2^s})}^x = (n^{y})^{x} = (n^{x})^ {y^{x}} =  (n^{x})^ {y^{-1}n_0} = (n^{x})^ {y^{-1}} =  (n^{x})^{ {(1 +  2^s)}^{-1}}.$$  Hence, ${(1 +  2^s)}^2 = 1$, a contradiction.

Thus we must have $[y, x] \in N$. Put $z:= x^{-1}y^{2^{k-1}}$; then $z^2 = {(x^{-1}y^{2^{k-1}})}^2 \equiv x^{-2}y^{2^{k}}  \equiv 1  
\textrm{ (mod } N \textrm{)}$. Therefore,  $z^2 \in N$, and after replacing $x$ by $z$, we return to Case 1. 

\emph{Subcase 2.3:} $k = 0$. Replacing $y$ by $yw$, we may assume that  $x^p = y$. We proceed by induction on $d=\textrm{dim}(G)$.
Since $N$ is normal in $G$, we conclude that  $G = \langle x \rangle \ltimes N$.
If $p > d$, then $p$ is odd  and by \cite[Theorem E]{GoKl09}, $G$ is saturable. It follows from Lemma~\ref{saturable} that $G$ is of the required form. 

If $p=d=2$, then by the classification of $2$-adic analytic pro-$2$ groups (\cite[Proposition~7.2]{GoKl09}), either $G$ is of the required form or 
$G \cong \Z_2 \ltimes \Z_2 $, where the action is as scalar multiplication by $-(1+2^t)$ for some $t \geq 2$ or by inversion. 
The latter case is excluded by Lemma~\ref{hereditarily 2 dim}.

Therefore, we may assume that $d \geq \max \{p, 3\}$. Let   $\{  z_1, z_2, ...,z_{d-1} \}$ be a basis for $N$.  
For each $1 \leq i \leq d-1$, set $N_i := \langle x, z_i  \rangle$. We claim that each $N_i$ is of infinite index in $G$. 
Indeed, if $N_i$ is open in $G$, then $\textrm{dim}(N_i)=\textrm{dim}(G)=d$, and it follows from Lemma~\ref{dimension p-1},  Lemma~\ref{dimension p} and Lemma~~\ref{dimension p1} that $d(N_i) >2$, a contradiction. 
   
Hence, each $N_i$ is Frattini-injective and of dimension $\leq d-1$. By the induction hypothesis,  $N_i$ is of the required form 
(hereditarily uniform). It follows easily (by Lie theoretic methods, for example) that $s \geq 2$ and for each $1 \leq i \leq d-1$, there is 
$\alpha_i \in \mathbb{Z}_p^{\times}$ such that $x^{-\alpha_i}z_i{x^{\alpha_i}}=z_i^{1+p^{s-1}}$. Since $x^{-p}z_ix^{p}=z_i^{1+p^s}$ for each $1 \leq i \leq d-1$, it is not difficult to see that we must have $\alpha_1=\alpha_2=\ldots=\alpha_{d-1}$. Therefore, $G$ is of the required form.
\end{proof}

We end this section with an example of a $p$-adic analytic pro-$p$ group in which Frattini-injectivity fails in a rather extreme way.
Let $\mathbb{D}_p$ be a central simple $\mathbb{Q}_p$-division algebra of index $2$, ${\Delta}_p$ the (unique) maximal $\mathbb{Z}_p$-order in $\mathbb{D}_p$ and $\mathfrak{P}$ the maximal ideal of ${\Delta}_p$. 
Let $SL_1(\mathbb{D}_p)$ be the set of elements of reduced norm $1$ in $\mathbb{D}_p$, and let $G= SL_1^1({\Delta}_p): = SL_1(\mathbb{D}_p) \cap (1+ \mathfrak{P})$. 
Then, if $p >3$, for every maximal subgroup $M$ of $G$ we have $\Phi(M) = \Phi(G)$   (cf. \cite[Lemma 2.26]{NoSn19}).

\section{Hierarchical triples}
\label{Hierarchical triples}
 
\begin{definition}
Let $G, K$ and $H$ be pro-$p$ groups with $H \leq K \leq G$. We say that $(G, K, H)$ is a \emph{hierarchical triple} if for every $x \in G$, we have that 
$x \in K$ whenever $x^p \in H$.  We call a pro-$p$ group $G$ \emph{Frattini-resistant} if for every finitely generated subgroup $H$ of $G$, the triple $(G,H,\Phi(H))$ is hierarchical.
\end{definition}
 
Recall that a pro-$p$ group $G$ is said to be \emph{strongly Frattini-injective} if distinct subgroups of $G$ (not necessarily finitely generated) have distinct Frattinis. 
In the same vein, $G$ is defined to be \emph{strongly Frattini-resistant} if $(G,H,\Phi(H))$ is a hierarchical triple for every subgroup $H$ of $G$.

Our first result shows that the definition of Frattini-resistance in terms of hierarchical triples coincides with the definition given in the introduction.

\begin{pro}
\label{embedding}
A pro-$p$ group $G$ is Frattini-resistant if and only if for all finitely generated subgroups $H$ and $K$ of $G$,
\[\Phi(H) \leq \Phi(K) \implies H \leq K. \]
In other words, a pro-$p$ group $G$ is Frattini-resistant if and only if the function $H \mapsto \Phi(H)$ is an embedding of the partially ordered set of finitely generated subgroups of $G$ into itself.
\end{pro}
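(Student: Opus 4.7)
The plan is to derive both directions straight from the definitions, using only the obvious monotonicity of the Frattini operation: if $H \leq K$ are (closed) subgroups of a pro-$p$ group, then $H^p \leq K^p$ and $[H,H] \leq [K,K]$, so $\Phi(H) \leq \Phi(K)$.

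For the forward direction, I would assume $G$ is Frattini-resistant and take finitely generated subgroups $H, K$ with $\Phi(H) \leq \Phi(K)$. For any $x \in H$ we have $x^p \in H^p \leq \Phi(H) \leq \Phi(K)$, and since $(G, K, \Phi(K))$ is a hierarchical triple by hypothesis, this forces $x \in K$. Hence $H \leq K$.

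For the converse, I would fix a finitely generated $H \leq G$ together with an element $x \in G$ such that $x^p \in \Phi(H)$, and apply the hypothesis to the smallest finitely generated subgroup detecting $x$, namely the procyclic subgroup $A := \overline{\langle x \rangle}$. Its Frattini subgroup $\Phi(A) = \overline{\langle x^p \rangle}$ is contained in the closed subgroup $\Phi(H)$ by assumption, so the hypothesis delivers $A \leq H$, whence $x \in H$. This shows $(G, H, \Phi(H))$ is hierarchical.

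The reformulation in terms of order-embeddings is then automatic: the reverse implication $H \leq K \Rightarrow \Phi(H) \leq \Phi(K)$ holds in every pro-$p$ group by monotonicity, so combining it with the hypothesis expresses precisely that $H \mapsto \Phi(H)$ is an embedding of posets. There is no real obstacle in the argument; the only technical point requiring a second's thought is remembering to take topological closures when identifying the Frattini subgroup of a procyclic pro-$p$ group.
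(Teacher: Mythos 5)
Your proposal is correct and follows essentially the same argument as the paper: the forward direction applies the hierarchical triple $(G,K,\Phi(K))$ to $x^p \in \Phi(H) \leq \Phi(K)$ for $x \in H$, and the converse tests the hypothesis on the procyclic subgroup $\langle x \rangle$ with $\Phi(\langle x \rangle) = \langle x^p \rangle \leq \Phi(H)$ (the paper's convention that subgroups are closed and generators are topological makes your explicit closures automatic). No substantive differences.
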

\begin{proof}
Suppose that $G$ is a Frattini-resistant pro-$p$ group, and let $H$ and $K$ be finitely generated subgroups of $G$ such that $\Phi(H) \leq \Phi(K)$.  
For every $x \in H$, we have that $x^p \in \Phi(K)$, and as $(G, K, \Phi(K))$ is a hierarchical triple, it follows that $x \in K$. Hence, $H \leq K$.

For the converse, suppose that $\Phi(H) \leq \Phi(K) \implies H \leq K$ for all finitely generated subgroups $H$ and $K$ of $G$.
Let $L$ be a finitely generated subgroup of $G$. If $x^p \in \Phi(L)$ for some $x \in G$,
then $\Phi(\langle x \rangle)=\langle x^p \rangle \leq \Phi(L)$, and thus $\langle x \rangle \leq L$, i.e., $x \in L$.
It follows that $(G, L, \Phi(L))$ is a hierarchical triple, and therefore $G$ is Frattini-resistant.
\end{proof}

Clearly, there is also a ``strong'' version of Propositio~\ref{embedding}:  A pro-$p$ group $G$ is 
strongly Frattini-resistant if and only if for all subgroups $H$ and $K$ of $G$,
${\Phi(H) \leq \Phi(K) \implies H \leq K}$. 

\begin{cor}
\label{FR}
Every (strongly) Frattini-resistant pro-$p$ group is (strongly) Frattini-injective. 
\end{cor}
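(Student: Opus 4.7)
The proof is essentially a one-liner given Proposition~\ref{embedding}. The plan is to deduce injectivity from the order-embedding characterization of Frattini-resistance by exploiting antisymmetry of the subgroup order. So first I would invoke Proposition~\ref{embedding}: a pro-$p$ group $G$ is Frattini-resistant if and only if $\Phi(H)\leq \Phi(K)\implies H\leq K$ for all finitely generated subgroups $H,K$ of $G$ (and similarly for the strong version, as noted in the discussion following the proposition).

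Next, suppose $G$ is Frattini-resistant and let $H,K$ be finitely generated subgroups of $G$ with $\Phi(H)=\Phi(K)$. Then both $\Phi(H)\leq \Phi(K)$ and $\Phi(K)\leq \Phi(H)$ hold, so Proposition~\ref{embedding} yields $H\leq K$ and $K\leq H$, hence $H=K$. This shows that $H\mapsto \Phi(H)$ is injective on the finitely generated subgroups of $G$, i.e., $G$ is Frattini-injective. The strong case is literally the same argument applied to arbitrary (not necessarily finitely generated) subgroups, using the strong version of Proposition~\ref{embedding}.

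There is no real obstacle here; the content is entirely contained in Proposition~\ref{embedding}, and the corollary is just the observation that an order embedding is in particular injective.

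\begin{proof}
Suppose $G$ is Frattini-resistant, and let $H,K$ be finitely generated subgroups of $G$ with $\Phi(H)=\Phi(K)$. By Proposition~\ref{embedding}, the inequalities $\Phi(H)\leq \Phi(K)$ and $\Phi(K)\leq \Phi(H)$ imply $H\leq K$ and $K\leq H$, respectively. Hence $H=K$, and $G$ is Frattini-injective. The same reasoning, applied to arbitrary subgroups and using the strong version of Proposition~\ref{embedding}, shows that every strongly Frattini-resistant pro-$p$ group is strongly Frattini-injective.
\end{proof}
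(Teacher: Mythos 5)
Your proof is correct and is essentially identical to the paper's own argument: both apply Proposition~\ref{embedding} twice to the two inclusions $\Phi(H)\leq\Phi(K)$ and $\Phi(K)\leq\Phi(H)$ to conclude $H=K$, and both handle the strong case by the same reasoning via the strong version of Proposition~\ref{embedding}.
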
  
\begin{proof}
Let $G$ be a Frattini-resistant pro-$p$ group, and suppose that $\Phi(H)=\Phi(K)$ for some finitely generated subgroups $H$ and $K$ of $G$.
By Proposition~\ref{embedding}, $\Phi(H) \leq \Phi(K)$ implies $H \leq K$, and $\Phi(K) \leq \Phi(H)$ implies $K \leq H$. Therefore, $H=K$.

In like manner, the ``strong'' version of the corollary is a consequence of the ``strong'' version of Proposition~\ref{embedding}.  
\end{proof}

We develop next several results that could be useful when trying to prove that a given pro-$p$ group is (strongly) Frattini-resistant.

\begin{pro}
\label{open-resistant}
Let $G$ be a pro-$p$ group, and suppose that $(G, U, \Phi(U))$ is a hierarchical triple for every open subgroup $U$ of $G$. Then,
$G$ is strongly Frattini-resistant.
\end{pro}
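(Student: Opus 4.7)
The plan is to reduce the statement for arbitrary (closed) subgroups to the assumption about open subgroups, using the fact that in a profinite group every closed subgroup is the intersection of the open subgroups containing it.

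Concretely, I would fix an arbitrary subgroup $H$ of $G$ and an element $x \in G$ with $x^p \in \Phi(H)$, and aim to show $x \in H$. The first step is to verify the monotonicity statement: for every open subgroup $V$ of $G$ with $H \leq V$, one has $\Phi(H) \leq \Phi(V)$. This follows because $H \leq V$ yields $H^p \leq V^p$ and $[H,H] \leq [V,V]$, and since $\Phi(V)$ is closed, we get $\Phi(H) = \overline{H^p[H,H]} \leq \Phi(V)$.

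Combining this with the hypothesis, $x^p \in \Phi(H) \leq \Phi(V)$ for every such $V$, and since $(G, V, \Phi(V))$ is assumed to be a hierarchical triple, we obtain $x \in V$. Now $H$ is closed in $G$ (per the paper's convention that all subgroups are closed), so $H$ equals the intersection of all open subgroups of $G$ containing it. Taking the intersection over all such $V$ therefore gives $x \in H$, which shows that $(G, H, \Phi(H))$ is hierarchical for every subgroup $H$, i.e., $G$ is strongly Frattini-resistant.

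I do not anticipate any genuine obstacle: the argument is essentially the routine passage from closed to open in the profinite setting, together with the elementary monotonicity $\Phi(H) \leq \Phi(V)$ when $H \leq V$. The only point one has to be mildly careful about is working with the topological closure in the definition of $\Phi(H)$ when $H$ is not finitely generated, which is handled as indicated above.
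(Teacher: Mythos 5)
Your proof is correct and is essentially the paper's argument in direct rather than contrapositive form: the paper picks $x \notin H$ and a single open $U \supseteq H$ avoiding $x$, while you intersect over all open $V \supseteq H$, but both rest on the same two facts, namely the monotonicity $\Phi(H) \leq \Phi(V)$ and the identity $H = \bigcap V$ for closed subgroups of a profinite group.
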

\begin{proof}
Let $H$ be a proper subgroup of $G$, and let $x \in G \setminus H$. 
Then, there exists an open subgroup $U$ of $G$ such that $H \leq U$ and $x \notin U$. By assumption, $(G, U, \Phi(U))$ is a hierarchical triple; 
hence, $x^p \notin \Phi(U)$. Since $\Phi(H) \leq \Phi(U)$, it follows that $x^p \notin \Phi(H)$. Therefore, $(G, H, \Phi(H))$ is a hierarchical triple.  
\end{proof}

\begin{cor}
\label{f.g Frattini-resistant}
A finitely generated Frattini-resistant pro-$p$ group is strongly Frattini-resistant.  
\end{cor}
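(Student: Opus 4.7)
The plan is to derive this directly from Proposition~\ref{open-resistant}. That proposition says that if $(G,U,\Phi(U))$ is a hierarchical triple for every \emph{open} subgroup $U$ of $G$, then $G$ is strongly Frattini-resistant. So it suffices to verify this hypothesis under the assumption that $G$ is finitely generated and Frattini-resistant.

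The key observation is that in a finitely generated pro-$p$ group, every open subgroup is itself finitely generated. This is immediate: an open subgroup has finite index, and a subgroup of finite index in a finitely generated (profinite) group is finitely generated (Schreier's argument suffices in our topological setting, since the generators are topological generators and open subgroups have finite index).

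Thus, given any open subgroup $U$ of the finitely generated pro-$p$ group $G$, the subgroup $U$ is finitely generated. Since $G$ is Frattini-resistant, by the definition in terms of hierarchical triples the triple $(G,U,\Phi(U))$ is hierarchical. As this holds for every open $U$, Proposition~\ref{open-resistant} applies and yields that $G$ is strongly Frattini-resistant.

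There is no real obstacle here; the corollary is essentially a packaging of Proposition~\ref{open-resistant} with the elementary fact that open subgroups of finitely generated pro-$p$ groups are finitely generated.
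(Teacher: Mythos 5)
Your proof is correct and is exactly the paper's argument: the authors likewise combine Proposition~\ref{open-resistant} with the fact that open subgroups of finitely generated pro-$p$ groups are finitely generated. Nothing is missing.
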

\begin{proof}
This follows from Proposition~\ref{open-resistant} and the fact that every open subgroup of a finitely generated pro-$p$ group is also finitely generated.
\end{proof}

Recall that an epimorphism $\phi:G  \to H$ of pro-$p$ groups such that $\textrm{ker }\phi \leq \Phi(G)$ is called a \emph{Frattini-cover}.

\begin{pro}
\label{F-R criterion}
Let $G$ be a pro-$p$ group. Suppose that for every open subgroup $U$ of $G$, there exists a Frattini-cover $\phi: U \to K$ onto a pro-$p$ group $K$ with the property that
$(K, M, \Phi(M))$ is a hierarchical triple for every maximal subgroup $M$ of $K$. Then, $G$ is strongly Frattini-resistant.
\end{pro}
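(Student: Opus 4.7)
The plan is to invoke Proposition~\ref{open-resistant}, so it suffices to show that $(G, U, \Phi(U))$ is a hierarchical triple for every open subgroup $U$ of $G$. Fix such a $U$ and suppose $x \in G$ satisfies $x^p \in \Phi(U)$; I want to conclude $x \in U$, arguing by contradiction.

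Assume $x \notin U$, and put $V := \langle x, U \rangle$, which is an open subgroup of $G$ that properly contains $U$. Since $U$ is proper in $V$, it lies in some maximal subgroup $M$ of $V$; and $x$ cannot belong to $M$, for otherwise $V = \langle x, U \rangle \leq M$, contradicting maximality. Apply the hypothesis to the open subgroup $V$ to obtain a Frattini-cover $\phi : V \to K$ such that $(K, M', \Phi(M'))$ is hierarchical for every maximal subgroup $M'$ of $K$. Because $\ker\phi \leq \Phi(V) \leq M$, the correspondence theorem gives $M = \phi^{-1}(\phi(M))$ and $\phi(M)$ is a maximal subgroup of $K$.

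Now observe that from $U \leq M$ one has $\Phi(U) = U^p[U,U] \leq M^p[M,M] = \Phi(M)$, so $x^p \in \Phi(M)$. Applying $\phi$ and using that $\phi$ restricted to $M$ is a continuous epimorphism onto $\phi(M)$, we get
\[
\phi(x)^p \;=\; \phi(x^p) \;\in\; \phi(\Phi(M)) \;=\; \Phi(\phi(M)).
\]
By the hierarchical property of $(K, \phi(M), \Phi(\phi(M)))$, this forces $\phi(x) \in \phi(M)$, and hence $x \in \phi^{-1}(\phi(M)) = M$, contradicting $x \notin M$. Therefore $x \in U$, proving that $(G, U, \Phi(U))$ is hierarchical, and Proposition~\ref{open-resistant} yields that $G$ is strongly Frattini-resistant.

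The argument is essentially a bookkeeping exercise once one has the correct reduction, and I do not anticipate a genuine obstacle; the only subtlety is ensuring that the maximal subgroup $M$ of $V$ chosen to contain $U$ can be arranged to avoid $x$ (which follows automatically from $V = \langle x, U\rangle$), and that maximal subgroups of $V$ correspond bijectively with maximal subgroups of $K$ under the Frattini-cover $\phi$ so that the hypothesis can be applied to $\phi(M)$.
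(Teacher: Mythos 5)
Your proof is correct and follows essentially the same route as the paper's: reduce via Proposition~\ref{open-resistant} to open subgroups, pass to $\langle x, U\rangle$, choose a maximal subgroup containing $U$ but not $x$, and transfer the hierarchical property through the Frattini-cover using $\ker\phi \leq \Phi(V) \leq M$. The only (cosmetic) difference is that the paper argues contrapositively with the explicit maximal subgroup $N = \Phi(\langle x, U\rangle)U$ and pulls $x^p \notin \Phi(N)$ back along $\phi$, whereas you argue by contradiction with an arbitrary maximal $M \supseteq U$ and push $x^p \in \Phi(M)$ forward.
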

\begin{proof}
Let $U$ be a proper open subgroup of $G$, and let $x \in G \setminus U$. 
Fix a Frattini-cover $\phi:\langle x, U \rangle \to K$ onto a pro-$p$ group $K$ with the property that
$(K, M, \Phi(M))$ is a hierarchical triple for every maximal subgroup $M$ of $K$. 

Set $N:= \Phi(\langle x, U \rangle)U$; then $N$ is a maximal subgroup of $\langle x, U \rangle$ which contains $U$, but does not contain $x$. 
Since $\phi$ is a Frattini-cover, $M:=\phi(N)$ is a maximal subgroup of $K$ and $\phi(x) \notin M$.
Furthermore, $\phi(x^p)=\phi(x)^p \notin \Phi(M)$ (because $(K, M, \Phi(M))$ is a hierarchical triple), and as $\phi(\Phi(N))=\Phi(M)$, we get that $x^p \notin \Phi(N)$.
Since $\Phi(U) \leq \Phi(N)$, it follows that $x^p \notin \Phi(U)$. Hence, $(G, U, \Phi(U))$ is a hierarchical triple.
As $U$ was chosen to be an arbitrary (proper) open subgroup of $G$, it follows from Proposition~\ref{open-resistant} that  $G$ is strongly Frattini-resistant.
\end{proof}

\begin{rem}
For a pro-$p$ group $G$ that is not finitely generated, the existence of appropriate Frattini-covers (as in Proposition~\ref{F-R criterion}) for all finitely generated subgroups of $G$, 
implies that $G$ is Frattini-resistant (although, not necessarily strongly Frattini-resistant).
\end{rem}

\begin{cor}
\label{open-maximal}
Let $G$ be a pro-$p$ group. Suppose that for every open subgroup $U$ of $G$ and for every maximal subgroup $M$ of $U$, the triple $(U, M, \Phi(M))$ is hierarchical. Then, $G$ is strongly Frattini-resistant.
\end{cor}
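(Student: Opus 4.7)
The plan is to derive Corollary~\ref{open-maximal} as an immediate application of Proposition~\ref{F-R criterion}. For each open subgroup $U$ of $G$, I take the Frattini-cover required by that proposition to be simply the identity map $\id_U \colon U \to U$. This is trivially a Frattini-cover, since its kernel is the trivial subgroup, which is certainly contained in $\Phi(U)$. With this choice $K=U$, so the condition \emph{``$(K,M,\Phi(M))$ is a hierarchical triple for every maximal subgroup $M$ of $K$''} demanded by Proposition~\ref{F-R criterion} becomes exactly the standing hypothesis that $(U,M,\Phi(M))$ is hierarchical for every maximal subgroup $M$ of $U$. Proposition~\ref{F-R criterion} then concludes that $G$ is strongly Frattini-resistant.

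There is essentially no obstacle here: the content of the corollary is that the identity always qualifies as the required Frattini-cover, so the hypothesis of Proposition~\ref{F-R criterion} is vacuously realized. If one prefers to avoid appealing to that proposition, one can give a direct proof via Proposition~\ref{open-resistant}: given a proper open subgroup $U$ of $G$ and an element $x \in G \setminus U$, set $H := \langle x, U\rangle$ (which is itself open in $G$) and $N := U\Phi(H)$. Then $N$ is a maximal subgroup of $H$ containing $U$ but not $x$, because if $x$ were in $N=U\Phi(H)$ then $H=U\Phi(H)$, which by the non-generator property of $\Phi(H)$ would force $H=U$, contradicting $x \notin U$. Applying the standing hypothesis to the pair $(H,N)$ yields $x^p \notin \Phi(N)$, and since $\Phi(U)\leq \Phi(N)$ we obtain $x^p \notin \Phi(U)$. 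Thus $(G,U,\Phi(U))$ is hierarchical for every open $U$, and Proposition~\ref{open-resistant} delivers strong Frattini-resistance.
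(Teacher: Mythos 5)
Your proposal is correct and matches the paper's own proof exactly: the paper proves Corollary~\ref{open-maximal} in one line by taking the identity map $\id_U \colon U \to U$ as the Frattini-cover in Proposition~\ref{F-R criterion}, just as you do. Your alternative direct argument via Proposition~\ref{open-resistant} is also sound, but it is simply the proof of Proposition~\ref{F-R criterion} inlined for the identity cover (with $N = U\Phi(H)$ playing the same role), so it is not a genuinely different route.
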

\begin{proof}
For every open subgroup $U$ of $G$, the identity map $id_U:U \to U$ is a Frattini-cover satisfying the condition of Proposition~\ref{F-R criterion}.  
\end{proof}

\begin{definition}
We define a pro-$p$ group $G$ to be \emph{strongly commutator-resistant} (\emph{commutator-resistant}) if $(H, \Phi(H), [H,H])$ is a hierarchical triple
for every (finitely generated) subgroup $H$ of $G$. 
\end{definition}

\begin{pro}
Every (strongly) commutator-resistant pro-$p$ group is (strongly) Frattini-resistant.
\end{pro}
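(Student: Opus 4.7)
The plan is to prove the contrapositive using Proposition~\ref{embedding} and its strong analogue: given subgroups $H$ and $K$ of $G$ (both finitely generated in the non-strong case) with $\Phi(H) \leq \Phi(K)$, the goal is to show $H \leq K$. Suppose for contradiction that there is some $h \in H \setminus K$, and set $M = \langle h, K\rangle$, a proper overgroup of $K$. The image of $h$ in the $\F_p$-vector space $M/\Phi(M)$ cannot lie in the image of $K$, since otherwise $M = K\Phi(M) = K$ by the Frattini property. Hence $M/\Phi(M)$ admits a hyperplane containing the image of $K$ but avoiding the image of $h$, either by finite-dimensional linear algebra or, for the strong variant where $M$ need not be finitely generated, by a routine application of Zorn's lemma. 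The preimage of this hyperplane is a maximal subgroup $N$ of $M$ with $K \leq N$, $h \notin N$, $M = \langle h, N\rangle$, and $h^p \in \Phi(H) \leq \Phi(K) \leq \Phi(N)$.

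I would then pass to the abelianization $\pi_M \colon M \twoheadrightarrow M^{ab}$. Since $\pi_M$ kills $[N,N]$, one has $\pi_M(\Phi(N)) = \pi_M(N^p) = p\,\pi_M(N)$ in additive notation, so $p\bar{h} = \pi_M(h^p) \in p\bar{N}$, and therefore $\bar{h} - \bar{n}$ is annihilated by $p$ in $M^{ab}$ for some $\bar{n} \in \bar{N}$. The commutator-resistance of $M$---that $(M,\Phi(M),[M,M])$ is a hierarchical triple---translates, after quotienting by $[M,M]$, precisely into the condition that every element of $M^{ab}$ killed by $p$ lies in $pM^{ab} = \pi_M(\Phi(M))$. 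Therefore $\bar{h} \in \bar{N} + \pi_M(\Phi(M)) = \pi_M(N\Phi(M)) = \pi_M(N)$, using $\Phi(M) \leq N$ in the last step. Lifting, $h \in N[M,M] \subseteq N$, contradicting the choice of $N$.

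The conceptually delicate step is recognizing that commutator-resistance should be invoked on $M = \langle h, K\rangle$, not on $H$ or $K$ themselves, and that a separating maximal subgroup $N$ is the device that converts the hypothesis $h^p \in \Phi(N)$ into a piece of $p$-torsion in $M^{ab}$ which commutator-resistance of $M$ is designed to absorb; granted this, the rest is elementary manipulation in the Frattini quotient, and the strong variant requires no further modification beyond the Zorn's-lemma point.
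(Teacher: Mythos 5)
Your argument is correct, and it applies commutator-resistance at the same place the paper does (to the join of the outside element with the subgroup), but the surrounding machinery is genuinely different and somewhat heavier. The paper proves the hierarchical-triple form directly: given $x$ with $x^p \in \Phi(H) = H^p[H,H]$, it extracts $h \in H$ with $x^p \equiv h^p \bmod [H,H]$, sets $K := \langle x, H\rangle$, observes $(x^{-1}h)^p \in [K,K]$, applies the hierarchical triple $(K,\Phi(K),[K,K])$ to get $x^{-1}h \in \Phi(K)$, and concludes $K = H\Phi(K) = H$ by the non-generator property of the Frattini subgroup --- no maximal subgroup and no contradiction argument are needed, and the order-embedding formulation then comes for free from Proposition~\ref{embedding}. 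Your separating hyperplane $N$ is in fact avoidable within your own setup: since $h^p \in \Phi(K)$ and $\pi_M(\Phi(K)) \leq p\,\pi_M(K)$, you can take $\bar{n} \in \pi_M(K)$ directly, and the same torsion argument in $M^{ab}$ (via Proposition~\ref{commutator-resistant abelianization}) gives $\bar{h} \in \pi_M(K\Phi(M))$, whence $M = K\Phi(M) = K$, contradicting $h \notin K$. One point you should repair in the strong case: a Zorn's-lemma hyperplane of the topological $\F_p$-vector space $M/\Phi(M)$ need not be closed, so its preimage need not be a legitimate (closed) maximal subgroup; the correct tool is the profinite topology --- in a pro-$p$ group every proper closed subgroup avoiding a given element is contained in an open subgroup of index $p$ avoiding that element, because finite quotients separate points. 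With that standard substitution (or with the detour through $N$ removed altogether, as above), your proof is complete in both the finitely generated and the strong settings.
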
  
\begin{proof}
Let $G$ be a commutator-resistant pro-$p$ group, and let $H$ be a finitely generated subgroup of $G$. 
Consider an element $x \in G$ such that $x^p \in \Phi(H)=H^p[H,H]$. Then $x^p[H,H]=h^p[H,H]$
for some $h \in H$. Set $K:=\langle x, H \rangle$; as $x, h \in K$ and $x^p[K,K]=h^p[K,K]$, it follows that
$(x^{-1}h)^p \in [K,K]$. Since $K$ is finitely generated, $(K, \Phi(K), [K,K])$ is a hierarchical triple, and thus
$x^{-1}h \in \Phi(K)$, or equivalently, $x\Phi(K)=h\Phi(K)$. Hence, we may replace $x$ by $h$ in a generating set for $K$. It follows that
$K=H$, and thus $x \in H$. This proves that $G$ is Frattini-resistant.

The ``strong'' version of the proposition can be proved in the same way.
\end{proof}

For an element $x$ of a pro-$p$ group $G$, we have that $x \in \Phi(G)$ if and only if $x[G,G] \in \Phi(G^{ab})$ (in other words, the abelianization homomorphism is a Frattini-cover).  
It follows that $(G, \Phi(G), [G,G])$ is a hierarchical triple if and only if every element of $G^{ab}$ of order $p$ is contained in $\Phi(G^{ab})$.

\begin{pro}
\label{commutator-resistant abelianization}
Let $G$ be a pro-$p$ group. The following statements are equivalent:
\begin{enumerate}[(i)]
\item $(G, \Phi(G), [G,G])$ is a hierarchical triple.
\item Every element of order $p$ in $G^{ab}$ is contained in $\Phi(G^{ab})$.
\item Every element of order $p$ in $G^{ab}$ is a $p$th power.
\end{enumerate}
\end{pro}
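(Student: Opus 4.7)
The plan is to funnel everything through the abelianization map $\pi \colon G \to G^{ab}$, exploiting that $\pi$ is a Frattini-cover (since $\ker \pi = [G,G] \leq \Phi(G)$), as was already noted in the remark immediately preceding the statement. I would record two translations at the outset: first, for any $x \in G$ one has $x \in \Phi(G)$ if and only if $\pi(x) \in \Phi(G^{ab})$ (because $\pi^{-1}(\Phi(G^{ab})) = \Phi(G)$ for a Frattini-cover); second, $x^p \in [G,G]$ if and only if $\pi(x)^p = 1_{G^{ab}}$, i.e., $\pi(x)$ has order dividing $p$.

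For (i) $\Leftrightarrow$ (ii), I would simply combine these two equivalences: the hierarchical condition on $(G,\Phi(G),[G,G])$ reads ``$x^p \in [G,G] \implies x \in \Phi(G)$'', which translates term by term into ``$\pi(x)^p = 1 \implies \pi(x) \in \Phi(G^{ab})$''. Since $\pi$ is surjective, quantifying over $x \in G$ is the same as quantifying over elements of $G^{ab}$, and the trivial element is always in $\Phi(G^{ab})$, so this is exactly assertion (ii).

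For (ii) $\Leftrightarrow$ (iii), the point is that for the abelian pro-$p$ group $A := G^{ab}$ one has $\Phi(A) = A^p$, because $[A,A] = 1$. I would then note that the subgroup $A^p$ coincides with the set of $p$th powers: the map $a \mapsto a^p$ is a continuous endomorphism of the compact group $A$, so its image is closed and already a (closed) subgroup, hence equal to $A^p$. Thus ``lies in $\Phi(G^{ab})$'' and ``is a $p$th power in $G^{ab}$'' mean the same thing.

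There is no genuine obstacle; the statement is essentially a dictionary translation through $\pi$, and the only mildly nontrivial input is the compactness argument identifying $A^p$ with $\{a^p : a \in A\}$ in the abelian pro-$p$ setting.
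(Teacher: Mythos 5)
Your proposal is correct and follows essentially the same route as the paper: the paper's proof likewise reduces (i)$\Leftrightarrow$(ii) to the observation, recorded just before the proposition, that the abelianization map is a Frattini-cover, and then gets (ii)$\Leftrightarrow$(iii) from the fact that $\Phi(G^{ab})$ consists exactly of the $p$th powers in $G^{ab}$. The only difference is that you spell out the compactness argument identifying $(G^{ab})^p$ with the set of $p$th powers, a standard fact the paper cites without proof.
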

\begin{proof}
This follows from the remarks made before the proposition and the fact that $\Phi(G^{ab})$ consists of the $p$th powers of elements of $G^{ab}$.
\end{proof}

As an immediate consequence of Proposition~\ref{commutator-resistant abelianization}, we get the following 

\begin{cor}
\label{CR - criteria}
Let $G$ be a pro-$p$ group. The following assertions hold:
\begin{enumerate}[(i)]
\item If $G$ is finitely generated, then $(G, \Phi(G), [G,G])$ is a hierarchical triple if and only if $G^{ab}$ does not contain a direct cyclic factor of order $p$.
\item If $G^{ab}$ is torsion-free, then $(G, \Phi(G), [G,G])$ is a hierarchical triple.
\end{enumerate}
\end{cor}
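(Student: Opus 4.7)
The plan is to deduce both assertions directly from Proposition~\ref{commutator-resistant abelianization}, reducing everything to a statement about the abelian pro-$p$ group $G^{ab}$ and its elements of order $p$. Part (ii) will be essentially immediate: if $G^{ab}$ is torsion-free, then $G^{ab}$ has no elements of order $p$, so the characterization ``every element of order $p$ in $G^{ab}$ is a $p$th power'' holds vacuously, and hence $(G,\Phi(G),[G,G])$ is a hierarchical triple.

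For part (i), the plan is to invoke the structure theorem for finitely generated $\mathbb{Z}_p$-modules: since $G$ is a finitely generated pro-$p$ group, $G^{ab}$ is a finitely generated $\mathbb{Z}_p$-module and therefore decomposes as
\[
G^{ab} \;\cong\; \Z_p^d \,\oplus\, \Z/p^{e_1}\Z \,\oplus\, \cdots \,\oplus\, \Z/p^{e_k}\Z,
\]
with each $e_i \geq 1$. I would then chase through the condition of Proposition~\ref{commutator-resistant abelianization} on each summand separately. The free summand $\Z_p^d$ contributes no element of order $p$. A cyclic summand $\Z/p^{e_i}\Z$ with $e_i \geq 2$ has its unique subgroup of order $p$ generated by $p^{e_i-1}\cdot 1$, which is a $p$th power. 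A cyclic summand $\Z/p\Z$ (i.e., $e_i=1$) contributes a nonzero element of order $p$ whose image in $G^{ab}/p\,G^{ab} = G^{ab}/\Phi(G^{ab})$ is nontrivial, so it is not a $p$th power.

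Combining these observations, every element of order $p$ in $G^{ab}$ lies in $\Phi(G^{ab})$ precisely when no summand $\Z/p\Z$ occurs, i.e., when $G^{ab}$ has no direct cyclic factor of order $p$. Applying Proposition~\ref{commutator-resistant abelianization} gives the equivalence asserted in (i). There is no substantive obstacle here; the only point that requires minor care is verifying that a generator of a $\Z/p\Z$ summand is genuinely outside $\Phi(G^{ab})$, which follows from the fact that the projection onto that summand sends it to a nonzero element of $\Z/p\Z = \Z/p\Z \,/\, p\cdot\Z/p\Z$.
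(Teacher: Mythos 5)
Your proposal is correct and follows exactly the route the paper intends: the paper states Corollary~\ref{CR - criteria} as an immediate consequence of Proposition~\ref{commutator-resistant abelianization}, and your argument simply fills in the implicit details, namely the vacuous case for torsion-free $G^{ab}$ and, for (i), the structure theorem for finitely generated $\Z_p$-modules together with the observation that order-$p$ elements lie in $pG^{ab}=\Phi(G^{ab})$ precisely when no $\Z/p\Z$ summand occurs. Both the summand-by-summand check and the projection argument showing a generator of a $\Z/p\Z$ factor is not a $p$th power are sound.
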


%\begin{lem}
%\label{commutator-frattini}
%Let $G$ be a pro-$p$ group, $H$ a subgroup of $G$, and $x \in G$. Suppose that $(K, \Phi(K), [K,K])$ is a hierarchical triple, where  
%$K=\langle x, H \rangle$. If $x^p \in \Phi(H)[K,K]$, then $x \in H$.
%\end{lem}
%\begin{proof}
%Suppose that $x^p \in \Phi(H)[K,K]=H^p[K,K]$. Then, there exists $h \in H$ such that $x^p[K,K]=h^p[K,K]$, and thus $(x^{-1}h)^p \in [K,K]$.
%Since $(K, \Phi(K), [K,K])$ is a hierarchical triple, it follows that  $x^{-1}h \in \Phi(K)$, i.e., $x\Phi(K)=h\Phi(K)$. 
%Consequently, $K=\langle h, H \rangle=H$, and hence $x \in H$.    
%\end{proof}

The following characterization of commutator-resistance is handy within the context of Galois theory.

\begin{pro}
\label{extension}
Let $G$ be a pro-$p$ group, and let $\pi: \Z_p/p^2\Z_p \to \Z_p/p\Z_p$ be the natural projection.
Then, $(G, \Phi(G), [G,G])$ is a hierarchical triple if and only if for every homomorphism $\phi:G \to  \Z_p/p\Z_p$, there is a 
homomorphism $\psi:G \to  \Z_p/p^2\Z_p$ such that $\pi \circ \psi = \phi$.
\end{pro}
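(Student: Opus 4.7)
The plan is to first observe that, since both $\Z_p/p\Z_p$ and $\Z_p/p^2\Z_p$ are abelian, every homomorphism from $G$ into either of them factors through $A := G^{ab}$. Thus the lifting problem depends only on $A$, and by Proposition~\ref{commutator-resistant abelianization} the hierarchical triple condition on $(G, \Phi(G), [G,G])$ is equivalent to $A[p] \subseteq pA$. It therefore suffices to prove the following reformulation: an abelian pro-$p$ group $A$ satisfies $A[p] \subseteq pA$ if and only if every continuous homomorphism $\phi \colon A \to \Z_p/p\Z_p$ lifts to a continuous $\psi \colon A \to \Z_p/p^2\Z_p$ with $\pi \circ \psi = \phi$.

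\textbf{Necessity.} I argue by contrapositive. Given $a \in A$ of order $p$ with $a \notin pA$, the image of $a$ in $A/pA$ is nonzero, so one can choose a continuous $\F_p$-linear functional $\bar\phi$ on the pro-$\F_p$ vector space $A/pA$ with $\bar\phi(a) = 1$. Composing with the quotient $A \to A/pA$ yields $\phi \colon A \to \Z_p/p\Z_p$ with $\phi(a) = 1$. For any lift $\psi$, the identity $p\psi(a) = \psi(pa) = 0$ forces $\psi(a) \in \ker \pi$, whence $\phi(a) = \pi(\psi(a)) = 0$, a contradiction.

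\textbf{Sufficiency.} This is the main work. Since any homomorphism to $\Z_p/p^n\Z_p$ kills $p^nA$, it suffices to construct the lift on $B := A/p^2A$. First, I verify that $B[p] = pB$: if $b = a + p^2A \in B$ satisfies $pb = 0$, then $pa = p^2 a'$ for some $a' \in A$, so $p(a - pa') = 0$ places $a - pa'$ in $A[p] \subseteq pA$, whence $a \in pA$ and $b \in pB$. Thus $B$ is an abelian pro-$p$ group of exponent dividing $p^2$ with $B[p] = pB$. By the structure theorem for abelian pro-$p$ groups of bounded exponent (equivalently, Pr\"ufer's theorem for bounded abelian $p$-groups applied to the Pontryagin dual of $B$), one has an isomorphism $B \cong \prod_{i \in I} \Z/p^{e_i}\Z$ with each $e_i \in \{1,2\}$. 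A $\Z/p\Z$-factor would contribute to $B[p]$ but not to $pB$, so the equality $B[p] = pB$ forces all $e_i = 2$, yielding $B \cong \prod_{i \in I} \Z/p^2\Z$. For such a $B$, continuity ensures that a homomorphism $\phi \colon B \to \Z_p/p\Z_p$ is supported on only finitely many coordinates, and each nonzero coordinate can be lifted independently through $\pi$, producing the desired $\psi$.

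\textbf{Main obstacle.} The crux is the sufficiency direction: extracting a lift without assuming $A$ is finitely generated. The key insight is that the hypothesis $A[p] \subseteq pA$ propagates to the exponent-$p^2$ quotient $B = A/p^2A$, forcing $B$ to be a direct product of copies of $\Z/p^2\Z$ with no $\Z/p\Z$ summands. Once this structural fact is pinned down, the lift is constructed coordinate-wise.
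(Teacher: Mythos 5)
Your proof is correct, and while the ``necessity'' half is essentially the paper's argument in contrapositive form, your ``sufficiency'' half takes a genuinely different route. The paper lifts $\phi$ by an elementary Frattini-subgroup argument: it factors $\phi$ through $\bar{\phi}\colon G^{ab} \to \Z_p/p\Z_p$, picks $x \in G^{ab} \setminus \ker\bar{\phi}$, uses the hierarchical hypothesis (via Proposition~\ref{commutator-resistant abelianization}) to show $px \notin \Phi(\ker\bar{\phi})$, and then chooses a maximal subgroup $M$ of $\ker\bar{\phi}$ avoiding $px$, so that $G^{ab}/M \cong \Z_p/p^2\Z_p$ and the quotient map (suitably normalized) is the lift --- no structure theory is needed. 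You instead pass to $B = G^{ab}/p^2G^{ab}$, show the hypothesis forces $B[p] = pB$, and invoke Pr\"ufer's theorem through Pontryagin duality to conclude $B \cong \prod_{i \in I} \Z/p^2\Z$, after which the lift is assembled coordinate-wise (correctly using continuity to reduce to finitely many coordinates). Both arguments are sound; the paper's is shorter and self-contained, staying entirely within the Frattini-quotient machinery already developed, whereas yours imports heavier tools (the structure theorem for bounded abelian groups and duality) but in exchange proves a sharper structural fact --- the exponent-$p^2$ quotient of $G^{ab}$ has no $\Z/p\Z$ factors --- which makes the lifting transparent and is reusable (e.g., it suggests how to handle lifts along $\Z_p/p^n\Z_p \to \Z_p/p\Z_p$). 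Two small points you left tacit but which are standard: $pA$ and $p^2A$ are closed, being continuous images of the compact group $A$, so the quotients $A/pA$ and $B$ are legitimately profinite; and in the necessity step, the existence of a continuous functional on the profinite $\F_p$-vector space $A/pA$ not vanishing at the image of $a$ follows by detecting $a$ in a finite elementary abelian quotient. Neither gap is serious.
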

\begin{proof}
Suppose that $(G, \Phi(G), [G,G])$ is a hierarchical triple, and let ${\phi:G \to \Z_p/p\Z_p}$ be an epimorphism.
Then $\phi$ factors through an epimorphism ${\bar{\phi}:G^{ab} \to \Z_p/p\Z_p}$.  
Let $x \in G^{ab} \setminus \textrm{ker }\bar{\phi}$. If $px \in \Phi(\textrm{ker } \bar{\phi})$, then $px=py$  for some $y \in \textrm{ker } \bar{\phi}$, and
$p(x-y)=0$; this contradicts Proposition~\ref{commutator-resistant abelianization} $(ii)$ since $x-y \notin \Phi(G^{ab})$.
Hence, there exists a maximal subgroup $M$ of $\textrm{ker } \bar{\phi}$ that does not contain $px$. It follows that $G^{ab}/M \cong \Z_p/p^2Z_p$, and 
there is an obvious homomorphism $\psi:G \to \Z_p/p^2\Z_p$ such that $\pi \circ \psi=\phi$.

For the other direction, suppose that for every homomorphism $\phi:G \twoheadrightarrow  \Z_p/p\Z_p$, there exists a 
homomorphism $\psi:G \to  \Z_p/p^2\Z_p$ such that $\pi \circ \psi = \phi$, and let $x \in {G \setminus \Phi(G)}$. Choose a maximal subgroup 
$M$ of $G$ that does not contain $x$, and consider the quotient homomorphism $\phi: G \to G/M\cong  \Z_p/p\Z_p$ ($\phi(x)=1+p\Z_p$).
Let $\psi:G \to  \Z_p/p^2\Z_p$ be a homomorphism such that $\pi \circ \psi = \phi$. Then $\psi(x)$ generates $\Z_p/p^2\Z_p$, and thus 
$\psi(x^p)=\psi(x)^p \neq 1$. Since $[G,G] \leq \textrm{ker } \psi$, it follows that $x^p \notin [G, G]$. Therefore, $(G, \Phi(G), [G,G])$ is a hierarchical triple.  
\end{proof}

Considering $\Z_p/p^2\Z_p$ as a trivial $G$-module, the extension property of Proposition~\ref{extension} comes down to saying that the 
natural projection $\Z_p/p^2\Z_p \to \Z_p/p\Z_p$ induces an epimorphism  $H^1(G, \Z_p/p^2\Z_p) \to H^1(G, \Z_p/p\Z_p)$ of cohomology groups.

\begin{cor}
\label{commutator open}
Let $G$ be a pro-$p$ group, and suppose that $(U, \Phi(U), [U, U])$ is a hierarchical triple for every open subgroup $U$ of $G$.
Then, $G$ is strongly commutator-resistant. In particular, a finitely generated commutator-resistant pro-$p$ group is strongly commutator-resistant.  
\end{cor}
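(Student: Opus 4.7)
The plan is to show that, under the stated hypothesis, for every closed subgroup $H$ of $G$ and every $x \in H$ with $x^p \in [H,H]$, one has $x \in \Phi(H)$; the ``in particular'' clause then follows at once, since every open subgroup of a finitely generated pro-$p$ group is itself finitely generated. The guiding idea is to ``thicken'' $H$ to surrounding open subgroups of $G$ and exploit the monotonicity of the commutator subgroup: whenever $V$ is an open subgroup of $G$ with $H \leq V$, one has $x^p \in [H,H] \leq [V,V]$, so the hierarchical hypothesis applied to $V$ forces $x \in \Phi(V)$.

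The argument I have in mind is by contradiction. Supposing $x \in H$, $x^p \in [H,H]$, but $x \notin \Phi(H)$, I would pick a maximal subgroup $M$ of $H$ with $x \notin M$. Then $M$ is open in $H$ of index $p$, and since $H$ carries the subspace topology from $G$, a standard separation argument (the compact set $H \setminus M$ is separated from $1 \in G$ by some open normal subgroup $W$ of $G$, and one takes $\tilde{W} := MW$) produces an open subgroup $\tilde{W}$ of $G$ with $\tilde{W} \cap H = M$. Setting $V := \langle x, \tilde{W}\rangle$ yields an open subgroup of $G$ with $H = \langle x, M \rangle \leq V$. By the preceding paragraph, the hypothesis on $V$ gives $x \in \Phi(V)$. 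Since $V = \langle x, \tilde{W}\rangle$ and $x \in \Phi(V)$, the non-generator property of the Frattini subgroup (valid in arbitrary pro-$p$ groups, since every proper closed subgroup lies in a maximal open subgroup of index $p$) forces $V = \tilde{W}$. But then $x \in \tilde{W} \cap H = M$, contradicting $x \notin M$.

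I expect the only mildly technical point to be the construction of $\tilde{W}$ with $\tilde{W} \cap H = M$, but this is a routine consequence of the fact that in a pro-$p$ group every closed set avoiding $1$ is separated from $1$ by an open normal subgroup, together with the observation that $MW$ is an open subgroup once $W$ is open and normal. Apart from this topological step, the proof closely mirrors the template of Proposition~\ref{open-resistant}, with the hierarchical pair $([V,V], \Phi(V))$ playing the role formerly played by $(\Phi(V), V)$.
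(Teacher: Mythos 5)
Your proof is correct, but it takes a genuinely different route from the paper's. The paper deduces the corollary cohomologically: by Proposition~\ref{extension}, the hypothesis says exactly that the projection $\Z_p/p^2\Z_p \to \Z_p/p\Z_p$ induces an epimorphism $H^1(U,\Z_p/p^2\Z_p)\to H^1(U,\Z_p/p\Z_p)$ for every \emph{open} subgroup $U$, and a single citation of Serre (continuous cohomology of a closed subgroup is the direct limit of the cohomology of the open subgroups containing it) transfers this surjectivity to every closed subgroup $H$, whence $(H,\Phi(H),[H,H])$ is hierarchical by Proposition~\ref{extension} again. You instead argue directly inside the group: given $x\in H$ with $x^p\in[H,H]$ but $x\notin\Phi(H)$, you pick a maximal (open, index-$p$) subgroup $M$ of $H$ missing $x$, separate $H\setminus M$ from $1$ by an open normal subgroup $W$ of $G$ so that $\tilde W:=MW$ is an open subgroup with $\tilde W\cap H=M$, and apply the hypothesis to the open subgroup $V:=\langle x,\tilde W\rangle\geq H$ to get $x\in\Phi(V)$; the non-generator property of the Frattini subgroup then collapses $V$ to $\tilde W$, forcing $x\in\tilde W\cap H=M$, a contradiction. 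Every ingredient you use is sound in the pro-$p$ setting: maximal closed subgroups are open of index $p$ and one of them avoids any prescribed element outside $\Phi(H)$; $MW$ is an open subgroup because $W$ is normal, and the computation $\tilde W\cap H=M$ only needs $W\cap H\leq M$; and the non-generator argument is valid for topological generation, here even trivially so since $\tilde W$ has finite index in $V$. What your approach buys is self-containedness --- no continuous cohomology and no direct-limit theorem, just the separation properties of the profinite topology --- and it exhibits concretely how open subgroups approximate a closed one, in the spirit of Proposition~\ref{open-resistant} (with the extra twist, absent there, needed because the commutator-resistance triple is internal to $H$ rather than ambient in $G$). What the paper's route buys is brevity, given that Proposition~\ref{extension} is already in place, and reusability: the cohomological formulation is precisely what carries over to nontrivial (twisted) coefficient modules in Lemma~\ref{quasi-smooth}, where a purely group-theoretic argument like yours would not transfer verbatim.
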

\begin{proof}
By Proposition~\ref{extension}, the natural projection $\pi: \Z_p/p^2\Z_p \to \Z_p/p\Z_p$ induces an epimorphism  
${\pi^*: H^1(U, \Z_p/p^2\Z_p) \to H^1(U, \Z_p/p\Z_p)}$ for every open subgroup $U$ of $G$ (where $U$ is assumed to act trivially on $\Z_p/p^2\Z_p$). 
It follows that ${\pi^*: H^1(H, \Z_p/p^2\Z_p) \to H^1(H, \Z_p/p\Z_p)}$ is an epimorphism for every subgroup $H$ of $G$ (cf. \cite[I.2.2, Proposition~8]{Se97}).
Hence, by Proposition~\ref{extension}, $G$ is strongly commutator resistant. 
\end{proof}

Before we turn to concrete classes of groups, we make one more useful observation. 

\begin{pro}
\label{inverse limits}
The properties Frattini-injective, Frattini-resistant and commutator-resistant (as well as their ``strong'' forms) are preserved under inverese limits.  
\end{pro}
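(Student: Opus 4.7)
My plan is a uniform inverse-limit argument that handles the three properties and their strong forms in parallel. Let $G=\varprojlim_{i\in I}G_i$ with canonical projections $\pi_i\colon G\to G_i$, each $G_i$ being assumed to satisfy the property in question. Since each of these properties is hereditary (it passes to closed subgroups), I would first replace $G_i$ by its closed subgroup $\pi_i(G)$, so that every $\pi_i$ becomes surjective. Writing $N_i:=\ker\pi_i$, we then have $\bigcap_i N_i=\{1\}$ and the family $\{N_i\}_{i\in I}$ is downward directed.

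The argument rests on two auxiliary facts that I would establish first. The first is that for every closed subgroup $H\le G$ one has $H=\bigcap_i HN_i$: each $HN_i$ is closed in the compact group $G$ (a product of closed subsets is closed in a compact Hausdorff group), and if $g\notin H$ then closedness of $H$ yields an open neighbourhood $V$ of $1$ with $gV\cap H=\emptyset$, while a finite-intersection-property argument using $\bigcap_i N_i=\{1\}$ produces some $N_i\subseteq V$, forcing $g\notin HN_i$. The second is that $\pi_i(\Phi(H))=\Phi(\pi_i(H))$ and $\pi_i([H,H])=[\pi_i(H),\pi_i(H)]$ for every closed $H\le G$; this reduces to the observation that $\Phi(H)$ and $[H,H]$ are closures of algebraically generated subgroups of $H$, combined with the fact that a continuous map between compact Hausdorff spaces sends the closure of any subset to the closure of its image.

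With these tools in hand, each case reduces to a short verification. For \emph{Frattini-injectivity}, finitely generated $H,K\le G$ with $\Phi(H)=\Phi(K)$ satisfy $\Phi(\pi_i(H))=\Phi(\pi_i(K))$ by the second fact; Frattini-injectivity of $G_i$ gives $\pi_i(H)=\pi_i(K)$, i.e.\ $HN_i=KN_i$, and the first fact then yields $H=K$. For \emph{Frattini-resistance}, I invoke Proposition~\ref{embedding} to rephrase the condition as ${\Phi(H)\le\Phi(K)\Rightarrow H\le K}$ and run the identical argument with $\le$ in place of $=$. For \emph{commutator-resistance}, given $x\in H$ with $x^p\in[H,H]$, pushing down to $G_i$ and using the property of $G_i$ together with the second fact yields $\pi_i(x)\in\pi_i(\Phi(H))$, i.e.\ $x\in\Phi(H)N_i$ for every $i$; applying the first fact to the closed subgroup $\Phi(H)\le G$ gives $x\in\Phi(H)$, as required. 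The three strong forms are handled verbatim, simply dropping the finite-generation hypothesis throughout.

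The only point requiring genuine attention is the second auxiliary fact in the non-finitely-generated setting needed for the strong versions: one must chase topological closures through the continuous surjection $\pi_i$, an operation legitimate only because of the compactness of pro-$p$ groups. Once this is in place, the remainder is a classical ``limit of truths is true'' argument powered by the first auxiliary fact.
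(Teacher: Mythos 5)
Your proof is correct and follows essentially the same route as the paper: project each subgroup to the factors $G_i$, use the identities $\pi_i(\Phi(H))=\Phi(\pi_i(H))$ and $\pi_i([H,H])=[\pi_i(H),\pi_i(H)]$ to apply the hypothesis there, and recover the conclusion in the limit from $H=\bigcap_i H\ker\pi_i$. The paper writes out only the Frattini-injective case and leaves the rest as ``equally easy''; you merely supply the standard compactness details (closure-chasing and the directed-intersection argument) that the paper takes for granted.
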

\begin{proof}
Let $(G_i, \phi_{i,j})_I$ be an inverse system of Frattini-injective pro-$p$ groups with inverse limit $(G, \phi_i)_{i \in I}$.
Let $H$ and $K$ be finitely generated subgroups of $G$ with $\Phi(H)=\Phi(K)$. Then, for every $i \in I$, 
\[\Phi(\phi_i(H))=\phi_i(\Phi(H))=\phi_i(\Phi(K))=\Phi(\phi_i(K)).\]
Hence, $\phi_i(H)=\phi_i(K)$ for all $i \in I$, and thus $H=K$.

It is equally easy to prove that all of the other properties are preserved under inverse limits. 
\end{proof}

\section{Frattini-injective solvable pro-$p$ groups}
\label{Frattini-injective solvable}

Let $G=\langle x \rangle \ltimes \Z_p^{d}$, where $\langle x \rangle \cong \Z_p$ and $x$ acts on $\Z_p^{d}$ as scalar multiplication by $1+p^s$ with $s \geq 1$ ($s \geq 2$ if $p=2$).  
It is easy to see that $G^{ab}=\Z_p \times (\Z_p/p^s\Z_p)^d$. It follows from Corollary~\ref{CR - criteria} $(i)$ that $G$ is not commutator-resistant if $s=1$.
On the other hand, it is not difficult to show that $G$ is commutator-resistant for $s \geq 2$. Moreover, a slight modification of the proof of Proposition~\ref{hereditarily} yields the following

\begin{pro}
\label{p-adic analytic Frattini-resistant}
Every $p$-adic analytic Frattini-injective pro-$p$ group is strongly Frattini-resistant. 
\end{pro}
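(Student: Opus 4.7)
The plan is to reduce the claim to the explicit classification already established in Theorem~\ref{main} and then apply the criterion for strong Frattini-resistance provided by Proposition~\ref{open-resistant}. So let $G$ be a $p$-adic analytic Frattini-injective pro-$p$ group. By Theorem~\ref{main}, $G$ is either $\Z_p^d$ or $\langle x \rangle \ltimes \Z_p^{d-1}$ with $x$ acting by scalar multiplication by $1 + p^s$ (with $s \geq 1$, or $s \geq 2$ if $p=2$). In both cases $G$ is a hereditarily uniform pro-$p$ group: for the abelian case this is immediate, and for the semidirect product case it was already noted in the proof of Proposition~\ref{virtually} (using the classification in \cite{KlSn11}, \cite{KlSn2013}).

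By Proposition~\ref{open-resistant}, to conclude strong Frattini-resistance it is enough to show that $(G, U, \Phi(U))$ is a hierarchical triple for every open subgroup $U$ of $G$. Fix such $U$. Since $G$ is hereditarily uniform, $U$ is uniform, and therefore $\Phi(U) = U^p$ (see \cite[Lemma~3.4]{DidSMaSe99}). Suppose $y \in G$ satisfies $y^p \in \Phi(U) = U^p$. Then there exists $u \in U$ with $y^p = u^p$. By the unique extraction of roots property for Frattini-injective pro-$p$ groups (Corollary~\ref{virtually-abelian}(iv)), this forces $y = u \in U$. Hence $(G, U, \Phi(U))$ is hierarchical, as desired.

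In short, the proof is essentially an extension of the argument in Proposition~\ref{hereditarily}: the classification in Theorem~\ref{main} hands us hereditarily uniform structure, uniformness of open subgroups gives $\Phi(U)=U^p$, and unique extraction of roots upgrades Frattini-injectivity to the hierarchical triple condition. There is no genuine obstacle here since all the heavy lifting has already been done earlier in the paper; the only point requiring attention is that hereditary uniformness is needed for every open subgroup (not just $G$ itself), which is exactly the content of the classification that was invoked.
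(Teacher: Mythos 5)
Your proof is correct and takes essentially the same approach as the paper, which obtains this proposition as a ``slight modification'' of the proof of Proposition~\ref{hereditarily}: the classification of Theorem~\ref{main} shows $G$ is hereditarily uniform, so for an open subgroup $U$ one has $\Phi(U)=U^p$ with every element of $U^p$ a $p$th power, and unique extraction of roots forces $y\in U$ whenever $y^p\in\Phi(U)$. The only cosmetic difference is that you delegate the passage from open subgroups to arbitrary subgroups to Proposition~\ref{open-resistant}, whereas the paper's argument performs that reduction directly by choosing an open subgroup containing $H$ and avoiding the offending element.
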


It follows from Theorem~\ref{main} that all Frattini-injective $p$-adic analytic pro-$p$ groups are metabelian. 
Conversely, we prove in this section that every solvable Frattini-injective pro-$p$ group is metabelian and locally $p$-adic analytic.

\begin{lem}
\label{semidirect product finite}
Let $G= \langle x \rangle \ltimes A$, where $\langle x \rangle \cong \Z_p$ and $A$ is an abelian pro-$p$ group, be a finitely generated Frattini-injective pro-$p$ group. 
Then $A$ is finitely generated.
\end{lem}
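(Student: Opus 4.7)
The plan is to endow $A$ with the structure of a compact module over the Iwasawa algebra $\Lambda := \Z_p[[T]]$, where conjugation by $x$ acts as multiplication by $1+T$. After choosing a minimal generating set of $G$ of the form $\{x, a_1, \dots, a_{n-1}\}$ with $a_i \in A$ (which one can arrange since $G/A \cong \Z_p$ is procyclic), the standard observation that $A$ equals the closed normal subgroup of $G$ generated by $a_1, \dots, a_{n-1}$ identifies $A$ with the closed $\Lambda$-submodule $\sum_i \Lambda a_i$. Since $[G,A] = TA$, the quotient $A/TA$ is a direct factor of $G^{ab}$ and hence finitely generated as a pro-$p$ group; in particular $A/\mathfrak{m}A$ is finite, where $\mathfrak{m} = (p,T)$, and topological Nakayama for the local ring $\Lambda$ yields that $A$ is finitely generated as a $\Lambda$-module.

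The key step is upgrading this to finite generation as a pro-$p$ group, equivalently as a $\Z_p$-module. Because $G$ is Frattini-injective it is torsion-free by Corollary~\ref{virtually-abelian}, so $A$ is $p$-torsion-free. If $A$ were not finitely generated over $\Z_p$, then some cyclic submodule $\Lambda a_i \cong \Lambda/\mathrm{Ann}_\Lambda(a_i)$ would also fail to be; but $p$-torsion-freeness together with Weierstrass preparation forces any nonzero element of $\mathrm{Ann}_\Lambda(a_i)$ to supply a distinguished polynomial in $\mathrm{Ann}_\Lambda(a_i)$, making $\Lambda/\mathrm{Ann}_\Lambda(a_i)$ finitely generated over $\Z_p$, a contradiction. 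Hence we would obtain some $a \in A$ with $\mathrm{Ann}_\Lambda(a) = 0$, and the subgroup $H := \langle x, a \rangle$ of $G$ would be $\langle x \rangle \ltimes \Lambda a$ with $\Lambda a \cong \Lambda$. The strategy is then to derive a contradiction by exhibiting two distinct subgroups of $H$ with the same Frattini subgroup, violating Frattini-injectivity of $G$.

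For the collision I use the element
\[
\sigma_p := \frac{(1+T)^p - 1}{T} = p + \tbinom{p}{2}T + \cdots + T^{p-1} \in \Lambda
\]
and the two subgroups
\[
K_1 := \langle x,\, \sigma_p a \rangle, \qquad K_2 := \langle (Ta)\, x,\, \sigma_p a \rangle
\]
of $H$. A direct computation gives $K_i \cap A = \Lambda \sigma_p a$ for both $i$, while $K_1 \neq K_2$: if $(Ta)x$ lay in $K_1$ then $T$ would have to lie in $\sigma_p \Lambda$ (using the injectivity of $\lambda \mapsto \lambda a$), but the non-invertible constant term $p$ of $\sigma_p$ prevents $\sigma_p$ from dividing $T$. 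The two Frattinis agree on the $A$-part $\mathfrak{m}\sigma_p a$, so equality $\Phi(K_1) = \Phi(K_2)$ reduces to showing that the ``twist'' between them at level $x^{p\ell}$, namely the element $((1+T)^{p\ell} - 1)a$, lies in $\mathfrak{m}\sigma_p a$ for every $\ell \in \Z_p$; this is in turn the divisibility $(1+T)^{p\ell} - 1 \in \sigma_p \mathfrak{m}$, which follows from the identity $(1+T)^p - 1 = T\sigma_p$ together with the binomial expansion of $(1 + T\sigma_p)^\ell$ in $\Lambda$. The main conceptual obstacle is the passage from $\Lambda$-generation to $\Z_p$-generation in step 2; identifying $\sigma_p$ as the correct witness for the Frattini collision is the other non-obvious ingredient, after which the remaining verifications are mechanical computations in $\Z_p[[T]]$.
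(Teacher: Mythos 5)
Your proof is correct, and while it shares the skeleton of the paper's argument --- regard $A$ as a compact module over $\Lambda=\Z_p[[T]]$, use $p$-torsion-freeness together with a Weierstrass-type argument to dispose of cyclic modules with nonzero relation ideal, and rule out the remaining case $\langle x\rangle \ltimes \Lambda$ by exhibiting two distinct finitely generated subgroups with equal Frattinis --- it differs genuinely at both points where real work happens. For the collision, the paper fixes the transversal and varies the module part, taking $\langle x, p^2, py, y^2\rangle$ and $\langle x, p^2, y^2\rangle$, which are distinct because $(p^2,py,y^2)\neq(p^2,y^2)$ but have equal Frattinis because $\mathfrak{m}(p^2,py,y^2)=\mathfrak{m}(p^2,y^2)=(p^3,p^2y,py^2,y^3)$; you instead fix the module part $\Lambda\sigma_p a$ and twist the transversal, and your verification is sound: the identity $(1+T)^p-1=T\sigma_p$ does give $(1+T)^{p\ell}-1\in\mathfrak{m}\sigma_p$ for all $\ell\in\Z_p$, so both Frattinis equal $\langle x^p\rangle\ltimes \mathfrak{m}\sigma_p a$, while $\sigma_p\nmid T$ keeps $K_1\neq K_2$. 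More significantly, your passage from $\Lambda$-generation to $\Z_p$-generation is leaner: the paper handles the cyclic case first and then, in the general case, invokes Proposition~\ref{p-adic analytic Frattini-resistant} (strong Frattini-resistance of Frattini-injective $p$-adic analytic groups, which rests on the classification in Theorem~\ref{main}) to show $[x,a_i]\in\Phi(A)$ and hence that $a_1,\dots,a_d$ generate $A$ topologically; your annihilator dichotomy needs only Weierstrass preparation, topological Nakayama, and the remark (worth stating explicitly) that a finite sum of compact submodules is closed, so it is self-contained and independent of the analytic classification --- at the mild cost of losing the paper's sharper quantitative conclusion that the $\Lambda$-generators themselves generate $A$ as a pro-$p$ group, which the lemma does not require. (Also, ``minimal'' is superfluous in your choice of generating set $\{x,a_1,\dots,a_{n-1}\}$; any finite generating set of this shape works.)
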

\begin{proof}
Since $G$ is finitely generated, $A$ is finitely generated as a topological $\langle x \rangle$-module.
The completed group algebra $\Z_p[\![\langle x \rangle]\!]$ can be identified with the formal power series algebra $\Z_p[\![y]\!]$ (by identifying $x$ with $1+y$);
so, we may regard $A$ as a right (topological) $\Z_p[\![y]\!]$-module (cf. \cite[Chapter 7]{Wil}). 

First suppose that $A$ is a cyclic $\Z_p[\![y]\!]$-module. 
Thus $A \cong \Z_p[\![y]\!]/I$ for some ideal $I$ of $\Z_p[\![y]\!]$.
We claim that $I$ can not be the zero ideal. Indeed, identify $A$ with $\Z_p[\![y]\!]$, and consider the subgroups $H:=\langle x, p^2y^0, py, y^2  \rangle$ and $K:=\langle x, p^2y^0, y^2 \rangle$ of $G=  \langle x \rangle \ltimes \Z_p[\![y]\!]$
(where, in order to avoid confusion, we denote by $y^0$ the identity element of $\Z_p[\![y]\!]$). It is readily seen that $H \neq K$ (in fact, $K$ is a maximal subgroup of $H$), however,
\[\Phi(H)=\Phi(K)=\langle x^p \rangle[p^2\Z_p[\![y]\!] + (y^2)], \]
which contradicts the assumption that $G$ is Frattini-injective.

Hence, we may assume that $I \neq (0)$. Since $A$ is Frattini-injective, and thus torsion-free, there is an element $a(y)=\sum_{n \geq 0} a_ny^n \in I$ that is not $p$-divisible in the abelian group $\Z_p[\![y]\!]$.
Let $m \geq 0$ be the smallest integer such that $a_m \notin p\Z_p$; then $a(y) \equiv b(y) \mod \Phi(\Z_p[\![y]\!])$, where $b(y)=\sum_{n \geq m} a_ny^n$. Since $a_m$ is a unit in $\Z_p$,  
there is $c(y) \in \Z_p[\![y]\!]$ such that $b(y)c(y)=y^m$. Consequently, $a(y)c(y) \equiv y^m \mod \Phi(\Z_p[\![y]\!])$ and $(y^m)+ \Phi(\Z_p[\![y]\!]) \leq I+ \Phi(\Z_p[\![y]\!])$.
From here it readily follows that $A$ is finitely generated as a pro-$p$ group.

\medskip

Now suppose that $A$ is generated by $a_1, a_2, \ldots, a_d$ as $\Z_p[\![y]\!]$-module.
For each $i=1, \ldots, d$, let $B_i$ be the submodule of $A$ generated by $a_i$; put $H_i:=\langle x, B_i \rangle=\langle x \rangle B_i$ and $K_i:=\langle x, B_i^p \rangle=\langle x \rangle B_i^p$. 
It follows from what has been already proved that $B_i$ is a finitely generated free abelian pro-$p$ group. 
Consequently, $H_i$ is a $p$-adic analytic Frattini-injective pro-$p$ group (since every extension of $p$-adic analytic pro-$p$ groups is $p$-adic analytic). 
By Proposition~\ref{p-adic analytic Frattini-resistant}, $H_i$ is strongly Frattini-resistant.

For a given $a \in A$, we have $[x, a] \in A$ and $[[x,a],a]=1$. Hence, for every $n \in \mathbb{N}$, 
\[[x,a^n]=[x,a][x,a^{n-1}][[x,a],a^{n-1}]=[x,a][x,a^{n-1}].\]
It follows that $[x,a^n]=[x,a]^n$ for all $n \in \mathbb{N}$. In particular, for each $i=1, \ldots d$,
\[[x,a_i]^p=[x, a_i^p] \in [K_i,K_i] \leq \Phi(K_i) .\]
Hence, $[x, a_i] \in K_i$ (because $H_i$ is strongly Frattini-resistant). Since also ${[x, a_i] \in B_i}$, we get that
\[[x, a_i] \in K_i \cap B_i=B_i^p \leq A^p.\]
Note that $A$ is generated (as a pro-$p$ group) by $x^{-\alpha}a_ix^{\alpha}$ ($\alpha \in \Z_p$ and $i=1, \ldots, d$).
However, $[x, a_i] \in A^p=\Phi(A)$ implies that $a_i\Phi(A)=x^{-1}a_ix\Phi(A)$. 
Therefore, $a_1, \ldots, a_d$ suffice to generate $A$.
\end{proof}

\begin{lem}
\label{metabelian}
Let $G$ be a non-abelian Frattini-injective metabelian pro-$p$ group. 
Then $G\cong \langle x \rangle \ltimes A$, where $\langle x \rangle \cong \Z_p$, $A$ is a free abelian pro-$p$ group, and 
$x$ acts on $A$ as scalar multiplication by $1+p^s$ with $s \geq 1$ if $p$ is odd, and $s \geq 2$ if $p=2$.   
\end{lem}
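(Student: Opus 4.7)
The plan is to identify a natural normal abelian subgroup $A$ to serve as the base of the desired semidirect product, and then recover the action of $G$ on $A$ one finitely generated cyclic submodule at a time by invoking Theorem~\ref{main}. I would fix $A$ to be a maximal abelian subgroup of $G$ containing $[G,G]$. Since $A\supseteq[G,G]$, it is normal in $G$; by maximality $C_G(A)=A$. As a subgroup of the Frattini-injective group $G$ it is itself Frattini-injective, and being abelian it must be free abelian pro-$p$. Because $G$ is non-abelian, $A\subsetneq G$.

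The heart of the argument is to show that conjugation by any $x\in G\setminus A$ acts on $A$ as scalar multiplication. Fix such an $x$; by torsion-freeness $\langle x\rangle\cong\Z_p$. For each $a\in A$, let $M_a$ denote the smallest closed $\langle x\rangle$-invariant subgroup of $A$ containing $a$, so that $H_a:=\langle x,a\rangle=\langle x\rangle\ltimes M_a$ is $2$-generated and Frattini-injective. Lemma~\ref{semidirect product finite} then forces $M_a$ to be finitely generated, so $H_a$ is $p$-adic analytic, and Theorem~\ref{main} leaves only two possibilities: either $H_a$ is abelian (so $a^x=a$), or $H_a\cong\langle y\rangle\ltimes\Z_p^{k}$ with $y$ acting on $\Z_p^{k}$ as scalar multiplication by $\lambda=1+p^s$ (with $s\geq 2$ when $p=2$). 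In the non-abelian case the $\Z_p^{k}$ factor is the unique maximal normal abelian subgroup of $H_a$, and so must coincide with $M_a$; writing $x=y^\beta m$ with $m\in M_a$ and noting that $\beta\in\Z_p^{\times}$ (otherwise $H_a=\langle x,M_a\rangle$ would lie in the proper open subgroup $\langle y^p\rangle M_a$), a direct calculation gives $a^x=a^{\lambda^\beta}$ with $\lambda^\beta\in 1+p\Z_p$ (respectively $1+4\Z_2$ when $p=2$).

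Thus I would obtain $\lambda_x(a)\in 1+p\Z_p$ with $a^x=a^{\lambda_x(a)}$ for every $a\in A$. Expanding $(ab)^x=a^x b^x$ for $\Z_p$-linearly independent $a,b$ (the rank-$1$ case being automatic) shows that $\lambda_x(a)$ is independent of $a$; let $\lambda_x$ denote the common value. The assignment $g\mapsto\lambda_g$ (with $\lambda_a:=1$ for $a\in A$) is a homomorphism $G\to 1+p\Z_p$ (respectively $1+4\Z_2$) with kernel $C_G(A)=A$, so $G/A$ embeds as a non-trivial closed subgroup of $\Z_p$ and is therefore itself isomorphic to $\Z_p$. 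Any preimage $x$ of a topological generator of $G/A$ satisfies $\langle x\rangle\cap A=1$ (since $\lambda_x\neq 1$ has no torsion in $1+p\Z_p$), yielding $G=\langle x\rangle\ltimes A$. Finally, writing $\lambda_x=1+p^s u$ with $u\in\Z_p^{\times}$ and using the $p$-adic logarithm on $1+p\Z_p$ (respectively $1+4\Z_2$), I would take $\alpha:=\log(1+p^s)/\log(\lambda_x)\in\Z_p^{\times}$; replacing $x$ by $x^\alpha$ normalizes the action to scalar multiplication by $1+p^s$.

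The main technical hurdle will be the second paragraph: correctly identifying $M_a$ as the unique maximal normal abelian subgroup of $H_a$ so that Theorem~\ref{main} pins down the scalar action, and then assembling the per-submodule data into a coherent $\langle x\rangle$-action on all of $A$.
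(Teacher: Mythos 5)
Your proposal is correct, and its first half is essentially the paper's argument: the paper also takes $A$ to be a maximal abelian subgroup containing $[G,G]$ (normal, free abelian, and isolated by Proposition~\ref{isolated}), forms for $x\in G\setminus A$ the subgroup generated by $x$ and finitely many elements of $A$ together with the normal closure of those elements, and invokes Lemma~\ref{semidirect product finite} plus Theorem~\ref{main} to conclude that $x$ acts on all of $A$ by scalar multiplication --- the only cosmetic difference being that the paper handles a tuple $a_1,\dots,a_d$ at once, so the consistency of the scalar across elements is absorbed into a single application of Theorem~\ref{main}, whereas you work one cyclic $\langle x\rangle$-submodule $M_a$ at a time and then patch the scalars via $(ab)^x=a^xb^x$. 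Where you genuinely diverge is the proof that $G/A\cong\Z_p$: the paper argues by contradiction, producing from a hypothetical rank-two quotient a $3$-generated subgroup $L=\langle x_1,x_2,a\rangle$ with an abelian normal subgroup $M=\langle [x_1,x_2],a\rangle$ such that $L/M\cong\Z_p\times\Z_p$, so that $L$ is $p$-adic analytic and Theorem~\ref{main} forces $L$ abelian, a contradiction; you instead assemble the scalars into a continuous character $g\mapsto\lambda_g$ into $1+p\Z_p$ (resp.\ $1+4\Z_2$) with kernel $C_G(A)=A$, so that $G/A$ embeds as a nontrivial closed subgroup of $\Z_p$. Your route is arguably slicker and is in fact the technique the paper itself deploys later, in the proof of Theorem~\ref{max-normal}\,(iv); it also buys you a clean splitting criterion ($\langle x\rangle\cap A=1$ from torsion-freeness of $1+p\Z_p$, where the paper leans implicitly on isolatedness) and an explicit normalization of the scalar via the $p$-adic logarithm, where the paper simply says the generator may ``clearly'' be chosen appropriately. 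Three small points to shore up when writing this out: justify $\langle x\rangle\cap M_a=1$ (it follows from isolatedness of $A$, Proposition~\ref{isolated}); verify directly, by the centralizer computation in $\langle y\rangle\ltimes\Z_p^{k}$ with faithful scalar action, that the $\Z_p^{k}$ factor contains every normal abelian subgroup --- you cannot cite Theorem~\ref{max-normal} here, since its proof depends on Theorem~\ref{solvable} and hence on this very lemma; and record the continuity of $g\mapsto\lambda_g$ (fix $a\neq 1$ and note $\lambda_g$ is read off from $a^g$), so that $G/A$ is a closed, hence topologically isomorphic, copy of $\Z_p$.
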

\begin{proof}
Let $A$ be a maximal abelian subgroup of $G$ containing $[G,G]$. Then $A$ is a free abelian pro-$p$ group (all torsion-free abelian pro-$p$ groups are free abelian).
Moreover, $A \unlhd G$ (since $[G,G] \leq A$) and $A$ is isolated in $G$ (Proposition~\ref{isolated}).    

Let $x \in G \setminus A$, and let $a_1, \ldots, a_d \in A$. Consider the subgroup $H:=\langle x, a_1, \ldots, a_d \rangle$ of $G$.
Let $N$ be the normal subgroup of $H$ generated (as a normal subgroup) by the elements $a_1, \ldots, a_d$.
Then $N \leq A$, and hence $N$ is abelian. Since $A$ is isolated in $G$, we also have $\langle x \rangle \cap N = \{1\}$.
Therefore, $H=\langle x \rangle N$ is an internal semidirect product. By Lemma~\ref{semidirect product finite},
$N$ is a finitely generated free abelian pro-$p$ group, and thus $H$ is $p$-adic analytic.

By Theorem~\ref{main}, either $H$ is abelian or for some unit $\alpha$ of $\Z_p$, $x^{\alpha}$ acts on $N$ as scalar 
multiplication by $1+p^s$ with $s \geq 1$ if $p$ is odd, and $s \geq 2$ if $p=2$. Since $a_1, \ldots, a_d$ were chosen to be arbitrary elements of $A$, it follows that $x$ must act in the same way on all elements of $A$.
As $A$ is a maximal abelian subgroup of $G$, $x$ can not commute with all elements of $A$; so, $x^{\alpha}$ (for some unit $\alpha$) acts on $A$ as scalar multiplication by $1+p^s$. 

The group $G/A$ is torsion-free since $A$ is isolated in $G$.
We claim that ${G/A \cong \Z_p}$. Suppose that this is not the case. 
Then, there exist $x_1, x_2 \in G$ such that $x_1A$ and $x_2A$ generate in $G/A$ a free abelian pro-$p$ group of rank two.
Fix $a \in A$, $a \neq 1$, and consider the group $L:=\langle x_1, x_2, a \rangle$. 
Now, we know that $x_1$ and $x_2$ normalize the abelian group ${M:=\langle [x_1, x_2], a \rangle \leq A}$. 
Hence, $M \unlhd L$ and $L/M \cong \Z_p \times \Z_p$. This implies that $L$ is $p$-adic analytic. 
It follows from Theorem~\ref{main}, that all Frattini-injective 
$p$-adic analytic pro-$p$ groups that have a quotient isomorphic to $\Z_p \times \Z_p$ are abelian. 
However, $L$ is not abelian since $x_1$ and $x_2$ do not commute with $a$, a contradiction.

Let $x$ be an element of $G$ such that $G/A=\langle xA \rangle.$ 
Clearly, we may choose $x$ in such a way that it acts on $A$ as scalar multiplication by $1+p^s$ ($s \geq 1$ if $p$ is odd, and $s \geq 2$ if $p=2$).
Therefore, $G= \langle x \rangle \ltimes A$ is of the required form. 
\end{proof}

\medskip

\begin{proof}[Proof of Theorem~\ref{solvable}]
Let $G=\langle x \rangle \ltimes A$ be a semidirect product as in the statement of the theorem. It is easily seen that 
$$G=\varprojlim_{i \in I} \langle x \rangle \ltimes A_i,$$ 
where
$\{A_i \mid i \in I\}$ is the set of finitely generated direct factors of $A$. Since all of the groups $\langle x \rangle \ltimes A_i$ are Frattini-injective (Theorem~\ref{main}), it follows from
Proposition~\ref{inverse limits} that $G$ is also Frattini-injective. (In fact, Proposition~\ref{p-adic analytic Frattini-resistant} and Proposition~\ref{inverse limits} imply that $G$ is strongly Frattini-resistant.)

In light of Lemma~\ref{metabelian}, in order to prove the converse, it suffices to argue that there are no solvable Frattini-injective pro-$p$ groups
of derived length $3$. Suppose to the contrary that $G$ is such a group. By Lemma~\ref{metabelian},
$[G,G]=\langle x \rangle \ltimes A$, where $\langle x \rangle \cong \Z_p$, $A$ is a free abelian pro-$p$ group, and 
$x$ acts on $A$ as scalar multiplication by $1+p^s$ with $s \geq 1$ if $p$ is odd, and $s \geq 2$ if $p=2$. 
Note that $A$ is the isolator of $G^{(2)}=A^{p^s}$ in $[G,G]$. Since $[G, G]$ is normal in $G$ and $G^{(2)}$ is characteristic in $[G,G]$, it follows that $A$ is normal in $G$. 

There are elements $y, z \in G$ such that $[y, z] \notin A$ (otherwise, we would have $[G,G] \leq A$ and $G^{(2)}=1$, a contradiction);
so, $[y, z]$ acts on $A$ as scalar multiplication by some $\lambda \neq 1$. Clearly, $\langle y, A \rangle$ and $\langle z, A \rangle$ are metabelian groups, and it follows from
Lemma~\ref{metabelian} that $y$ and $z$ also act on $A$ by scalar multiplication. 

Fix $a \in A$, $a \neq 1$. Then the group $\langle y, z \rangle$ acts on $\langle a \rangle \cong \Z_p$, so we get a homomorphism 
$\phi:\langle y, z \rangle \to \textrm{Aut}(\Z_p)$. However, $[y, z] \notin \textrm{ker} \phi$, which is impossible since $\textrm{Aut}(\Z_p)$ is abelian.
\end{proof}

\medskip
 
\begin{proof}[Proof of Theorem~\ref{max-normal}]
It is readily seen (using Zorn's lemma) that $G$ contains a maximal normal abelian subgroup $N$.
The isolator of $N$ is also a normal subgroup of $G$, and it follows from Lemma~\ref{virtually-abelian} $(iii)$ that it is abelain.
Hence, $N$ coincides with its isolator, and so it is isolated in $G$.

Suppose that $M$ is another maximal normal abelian subgroup of $G$. Then, $NM$ is a solvable Frattini-injective pro-$p$ group. It follows from Theorem~\ref{solvable}
that $NM=\langle x \rangle \ltimes A$ (where the semidirect product is of the form described in the theorem). 
Without loss of generality, we may assume that the restriction to $N$ of the projection homomorphism from $NM$ onto $\langle x \rangle$ is surjective.
Hence, there is $n \in N$ with $n=xy$ for some $y \in A$. 
For every $a \in A$, $[a, n]=a^{p^s} \in N$; as $N$ is isolated in $G$, it follows that $a \in N$.
This implies that $M \leq NM \leq N$, a contradiction. Hence, $N$ is the unique maximal normal abelian subgroup of $G$.

Let $x \in G$. Then, the group $\langle x, N \rangle$ is either abelian or metabelian with $x$ acting on $N$ by scalar multiplication. 
Furthermore, if $Z(G) \neq 1$, then $Z(G) \leq N$, and $x$ commutes with every element in $N$.
Now, it is clear that $(ii)$ and $(iii)$ hold.

Suppose that $Z(G)=1$ but $N \neq 1$. Since $N$ is normal in $G$, the centralizer of $N$ is also normal in $G$. Moreover, as $Z(C_G(N))$ is characteristic in $C_G(N)$, it
follows that $Z(C_G(N))$ is a normal abelian subgroup of $G$. Hence,  $Z(C_G(N))  \leq N$, and as the reverse inclusion is obvious, we get $Z(C_G(N)) = N$.

It follows from Corollary~\ref{virtually-abelian} $(ii)$ that $C_G(N)$ is isolated in $G$ (because every intersection of isolated subgroups is also isolated).
Hence, $G/C_G(N)$ is torsion free. We need to prove that $G/C_G(N)$ is pro-cyclic.

Fix $n \in N$, $n \neq 1$. It follows from $(ii)$ that every non-trivial element of $G/C_G(N)$ acts on $\langle n \rangle$ by non-trivial scalar multiplication. 
Hence, $G/C_G(N)$ embeds into $\textrm{Aut}(\langle x \rangle)$. Since $G/C_G(N)$ is torsion free, it follows that it is isomorphic to $\Z_p$.

\end{proof}

\section{Free pro-$p$ groups and Demushkin groups}
\label{Free and Demushkin}

Let $F$ be a free pro-$p$ group, and let $H$ be a subgroup of $F$. Then, $H$ is also free pro-$p$ and $H^{ab}$ is a free abelian pro-$p$ group.
It follows from Corollary~\ref{CR - criteria} $(ii)$ that $(H, \Phi(H), [H,H])$ is a hierarchical triple. 

\begin{thm}
\label{free}
Every free pro-$p$ group is strongly commutator-resistant.
\end{thm}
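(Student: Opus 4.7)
The plan is essentially to promote the observation already made in the paragraph immediately preceding the theorem into a formal proof. By definition, strong commutator-resistance of a pro-$p$ group $G$ means that $(H,\Phi(H),[H,H])$ is a hierarchical triple for \emph{every} subgroup $H$ of $G$; so one just needs to verify this condition uniformly across all subgroups of $F$.

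First I would invoke the pro-$p$ analogue of the Nielsen--Schreier theorem: every (closed) subgroup of a free pro-$p$ group is again free pro-$p$. Hence if $F$ is free pro-$p$ and $H\leq F$ is an arbitrary subgroup, then $H$ is free pro-$p$ and therefore $H^{ab}$ is a free abelian pro-$p$ group, which in particular is torsion-free. At this point Corollary~\ref{CR - criteria}$(ii)$ applies verbatim to $H$ and yields that $(H,\Phi(H),[H,H])$ is a hierarchical triple.

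Since $H$ was an arbitrary subgroup of $F$, the definition of strong commutator-resistance is satisfied. So the proof reduces to one sentence: apply Corollary~\ref{CR - criteria}$(ii)$ to every subgroup of $F$, using that all such subgroups are free pro-$p$ and consequently have torsion-free abelianizations.

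There is no real obstacle here; the only non-trivial ingredient is the pro-$p$ Nielsen--Schreier theorem, which is standard and can simply be cited (e.g.\ from Ribes--Zalesskii or Serre's \emph{Galois Cohomology}). The structural content of the theorem is thus entirely encoded in the machinery of hierarchical triples set up in Section~\ref{Hierarchical triples}, together with the hereditary freeness of free pro-$p$ groups.
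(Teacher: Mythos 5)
Your proof is correct and is essentially identical to the paper's own argument: the paragraph preceding Theorem~\ref{free} proves it by noting that every subgroup $H$ of a free pro-$p$ group is free pro-$p$ (pro-$p$ Nielsen--Schreier), so $H^{ab}$ is free abelian, hence torsion-free, and Corollary~\ref{CR - criteria}$(ii)$ gives that $(H,\Phi(H),[H,H])$ is hierarchical.
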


A pro-$p$ group $G$ is called a Demushkin group if it satisfies the following conditions:
\begin{itemize}
\item[(i)] $\textrm{dim} _{\mathbb{F}_p} H^1(G,\mathbb{F}_p) < \infty$,
\item[(ii)] $\textrm{dim} _{\mathbb{F}_p} H^2(G,\mathbb{F}_p)=1$, \textrm{ and }
\item[(iii)] the cup-product $H^1(G,\mathbb{F}_p)\times H^1(G,\mathbb{F}_p) \to H^2(G,\mathbb{F}_p) \cong \mathbb{F}_p$ is a non-degenerate bilinear form.
\end{itemize}

If $k$ is a $p$-adic number field containing a primitive $p$th root of unity and $k(p)$ is a maximal $p$-extension of $k$, then $\textrm{Gal}(k(p)/k)$ is a Demushkin group. 
Furthermore, the  pro-$p$ completion of any orientable surface group is also a Demushkin group. In fact, all Demushkin groups have many properties reminiscent of surface groups. For instance, 
every finite index subgroup $U$ of a Demushkin group $G$ is a Demushkin  group with $d(U) = |G:U|(d(G)-2)+2$, and every subgroup of infinite index is free pro-$p$. For a detailed exposition of the theory of Demushkin groups, see \cite{Se97} or  \cite{NeSchWi08}. 

Let $G$ be a Demushkin group. Since $\textrm{dim} _{\mathbb{F}_p} H^2(G,\mathbb{F}_p)=1$, it follows that $G$ is a one related pro-$p$ group. Hence, there is an epimorphism $\pi: F \to G$ where $F$ is a free pro-$p$ group of rank $d:=d(G)$ and $\textrm{ker }\pi$ is generated as a closed normal subgroup by one element $r \in \Phi(F)=F^p[F, F]$. It follows that either $G^{ab}\cong \mathbb{Z}_p^d$ or $G^{ab}\cong \mathbb{Z}/p^e\mathbb{Z} \times \mathbb{Z}_p^{d-1}$ for some $e \geq 1$; set $q:=p^e$ in the latter and $q:=0$ in the former case. Then, $d$ and $q$ are two invariants associated to  $G$.  (When we wish to emphasize the Demushkin group under consideration, we write $d(G)$ and $q(G)$ for the invariants of $G$.) 
%Indeed,  $q$ is the largest power of $p$ such that $r \in F^q[F, F]$. 

Demushkin groups were classified by  Demushkin, Serre and Labute (\cite{Demushkin1}, \cite{Demushkin2}, \cite{Serre2}, and  \cite{Labute}). We summarise the classification in the following

\begin{thm}
\label{demushkin - classification}
Let $G$ be a Demushkin  group with invariants $d$ and $q$.
Then $G$ admits a presentation $G=\langle x_1,x_2, \ldots x_d \mid r \rangle$, where 
\begin{enumerate}[(i)]
\item if $q \neq 2$, then $d$ is even and 
$$r=x_1^q[x_1,x_2][x_3, x_4]\cdots [x_{d-1}, x_d];$$  %where for $q=\infty$ we define $x_1^{\infty}=1$ 
\item if $q=2$ and $d$ is even, then 
\[r=x_1^{2+\alpha}[x_1,x_2]x_3^{2^f}[x_3, x_4]\cdots [x_{d-1}, x_d]\] 
for some  $f= 2, 3, \ldots, \infty$ $( 2^{f}=0 \text{ when } f=\infty)$ and $\alpha \in 4\mathbb{Z}_2$;
\item if $q=2$ and $d$ is odd, then 
$$r=x_1^2x_2^{2^f}[x_2,x_3]\cdots [x_{d-1}, x_d]$$ 
for some $f= 2, 3, \ldots, \infty$.
\end{enumerate}
\end{thm}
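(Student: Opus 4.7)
The plan is to exploit the one-relator structure of Demushkin groups together with the non-degeneracy of the cup product to normalize the single defining relator. Since $\dim_{\mathbb{F}_p} H^2(G, \mathbb{F}_p) = 1$, the standard theory of minimal pro-$p$ presentations yields $G = F/\langle\!\langle r \rangle\!\rangle$, where $F$ is a free pro-$p$ group of rank $d = d(G)$ and the relator satisfies $r \in P_2(F) = \Phi(F)$ and $r \notin P_3(F)$; the latter because otherwise the induced map $H^1(G, \mathbb{F}_p) \times H^1(G, \mathbb{F}_p) \to H^2(G, \mathbb{F}_p)$ would vanish, contradicting condition (iii). Fix a basis $x_1, \ldots, x_d$ of $F$. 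The image $\bar r$ of $r$ in $\Phi(F)/P_3(F)$ admits a Magnus-style decomposition: for odd $p$ it can be written uniquely as $\prod x_i^{p a_i} \cdot \prod_{i<j} [x_i, x_j]^{b_{ij}}$, and for $p = 2$ by a similar but entangled expression in which squares and commutators interact. A direct computation (via Fox calculus, or via the five-term exact sequence of $1 \to \langle\!\langle r \rangle\!\rangle \to F \to G \to 1$) identifies the cup product on $H^1(G, \mathbb{F}_p) \cong (F/\Phi(F))^*$ with the bilinear form whose matrix is read off from the $b_{ij}$, together with diagonal Bockstein contributions coming from the $a_i$.

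The key structural input is then that this bilinear form is non-degenerate. When $q \neq 2$ the form is alternating, so a symplectic change of basis over $\mathbb{F}_p$ (realized inside $F$ because $\mathrm{Aut}(F) \twoheadrightarrow \mathrm{GL}_d(\mathbb{F}_p)$) forces $d$ to be even and puts $\bar r$ into the canonical shape $x_1^{q} [x_1, x_2][x_3, x_4] \cdots [x_{d-1}, x_d]$ modulo $P_3(F)$, where the exponent $q$ is recovered as the order of the torsion subgroup of $G^{ab}$. Having normalized $\bar r$ in this way, one upgrades to an honest identity in $F$ by a Tietze-style induction along the lower $p$-series: at stage $n$, modifications of the generators by elements of $P_{n-1}(F)$ alter $r$ only inside $P_n(F)$, and the residual error at each stage can be absorbed because the relevant obstruction space is controlled by the nondegenerate cup product. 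Convergence in the pro-$p$ topology is automatic since $\bigcap_n P_n(F) = \{1\}$.

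The main obstacle, and the source of cases (ii) and (iii), is $p = 2$ with $q = 2$. Here the identity $(xy)^2 \equiv x^2 y^2 [y, x] \pmod{P_3(F)}$ means that squares and commutators are not linearly independent in $\Phi(F)/P_3(F)$; consequently the invariant on $H^1$ is not a skew-symmetric form but a $\mathbb{Z}/4$-valued quadratic refinement of one. Classifying such refinements over $\mathbb{F}_2$ produces the additional discrete parameter $f \in \{2, 3, \ldots, \infty\}$ (with $f = \infty$ corresponding to $G^{ab}$ being torsion-free except for a single $\mathbb{Z}/2$-summand), the parity distinction between even and odd $d$, and a residual $2$-adic parameter $\alpha \in 4 \mathbb{Z}_2$ that cannot be eliminated by any automorphism of $F$. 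The hard part is verifying that $\alpha$ and $f$ genuinely survive all admissible changes of basis: this requires working with $\mathbb{Z}/2^k$-coefficient cohomology rather than just $\mathbb{F}_2$, tracking higher Bockstein operations, and carrying out a delicate induction in $F/P_n(F)$ for all $n$. This second part of the argument is essentially Labute's contribution, with cases (i) and the even-dimensional part of (ii) going back to Demushkin and Serre.
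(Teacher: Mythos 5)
You should note at the outset that the paper does not prove this theorem: it is stated explicitly as a summary of the classification due to Demushkin, Serre and Labute, with citations to \cite{Demushkin1}, \cite{Demushkin2}, \cite{Serre2} and \cite{Labute}. So your proposal can only be measured against those classical arguments, and at the level of strategy it reconstructs them faithfully: realize $G=F/N$ with $N$ the normal closure of a single relator $r\in\Phi(F)\setminus P_3(F)$ (the exclusion $r\notin P_3(F)$ correctly justified by non-degeneracy of the cup product), read off the cup product and Bockstein data from the image of $r$ in $\Phi(F)/P_3(F)$, normalize that image using a symplectic change of basis lifted through the surjection $\mathrm{Aut}(F)\twoheadrightarrow \GL_d(\F_p)$, and then refine the normal form by successive approximation along the filtration $P_n(F)$, with convergence guaranteed by $\bigcap_n P_n(F)=1$. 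That is indeed the skeleton of the Demushkin--Serre--Labute proofs, including the correct identification of $p=2$, $q=2$ as the delicate case because squares and commutators are entangled in $\Phi(F)/P_3(F)$.

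Judged as a proof, however, the proposal has a genuine gap and two slips. The gap: the sentence asserting that ``the residual error at each stage can be absorbed because the relevant obstruction space is controlled by the nondegenerate cup product'' is not a remark but the entire mathematical content of the induction, in \emph{all} three cases, not merely for $q=2$. One must prove, level by level, that the map sending a modification of the generators by elements of $P_{n-1}(F)$ to the induced change of $r$ modulo $P_{n+1}(F)$ is surjective onto the admissible error terms; this is a delicate computation in the graded structure of $F$ and is where the cited papers spend most of their effort. Deferring it (and the whole $q=2$ analysis) to ``Labute's contribution'' leaves a roadmap rather than a proof. The slips: first, your parenthetical that $f=\infty$ corresponds to $G^{ab}$ being torsion-free except for one $\Z/2\Z$-summand is false --- in cases (ii) and (iii) one computes $G^{ab}\cong \Z/2\Z\times \Z_2^{d-1}$ for \emph{every} $f\in\{2,\ldots,\infty\}$ (writing the abelianized relator as twice a primitive vector, since $1+\alpha/2$ is a unit), which is precisely why $q(G)=2$ throughout; the parameter $f$ is invisible in the abelianization and must be detected via $\Z/2^k$-coefficient cohomology, as you correctly indicate elsewhere. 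Second, your closing paragraph aims at the wrong target: the theorem asserts only the \emph{existence} of a presentation of the stated shape, not that $\alpha$ and $f$ are isomorphism invariants of $G$; showing they ``survive all admissible changes of basis'' belongs to the finer classification up to isomorphism (also due to Labute) and is not required for the statement being proved.
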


\medskip

We first consider Demushkin groups with $q \neq 2$. (As is often the case, ${q=2}$ takes more effort.)

\begin{thm}
\label{q>2}
Let $G$ be a Demushkin pro-$p$ group.
Then, the following assertions hold: 
\begin{enumerate}[(i)]
\item If $q(G) \neq p$, then $G$ is strongly commutator-resistant.
\item If $q(G)=p$ and $p$ is odd, then $G$ is strongly Frattini-resistant, but not commutator-resistant. 
\end{enumerate} 
\end{thm}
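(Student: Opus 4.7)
The plan for part (i) is to reduce the problem to an abelianization criterion on open subgroups. Since every open subgroup $U$ of the Demushkin pro-$p$ group $G$ is itself Demushkin, the abelianization $U^{ab}$ is of the form $\Z_p^{d(U)}$ or $\Z/p^{e(U)}\Z \times \Z_p^{d(U)-1}$. By Corollary~\ref{CR - criteria}(i), the triple $(U, \Phi(U), [U,U])$ is hierarchical if and only if $q(U) \neq p$, and by Corollary~\ref{commutator open}, establishing this for every open $U$ yields strong commutator-resistance of $G$. The heart of the argument is therefore the following.

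\emph{Key claim.} If $G$ is Demushkin with $q(G) \neq p$, then $q(U) \neq p$ for every open subgroup $U \leq G$.

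I would prove the Key claim by writing $U$ as a quotient of its free pro-$p$ cover $\tilde U$ (the preimage of $U$ in a minimal free presentation $F \twoheadrightarrow G$) and carrying out a Schreier-style computation of $U^{ab} = \tilde U^{ab}/R^{ab}$, where $R$ is the normal closure of the Demushkin relator $r$ in $F$. The explicit form of $r$ in Theorem~\ref{demushkin - classification} shows that $r$ lies in $[F,F]$ when $q(G) = 0$ and in $F^{p^2}[F,F]$ modulo a commutator correction when $q(G) = p^e$ with $e \geq 2$; analysing the images in $\tilde U^{ab}$ of the $F$-conjugates of $r$ (which generate $R^{ab}$) then forces every torsion element of $U^{ab}$ to have order at least $p^2$, i.e., $q(U) \neq p$. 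An alternative, cohomological route goes through Proposition~\ref{extension}, verifying that the Bockstein $\beta_U \colon H^1(U, \F_p) \to H^2(U, \F_p)$ vanishes.

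For part (ii), that $G$ is not commutator-resistant is immediate from Corollary~\ref{CR - criteria}(i), since $G^{ab} \cong \Z/p\Z \times \Z_p^{d-1}$ has a direct cyclic summand of order exactly $p$. For strong Frattini-resistance I would apply Proposition~\ref{F-R criterion}: for each open $U \leq G$, find a Frattini-cover $\phi \colon U \to K$ such that $(K, M, \Phi(M))$ is hierarchical for every maximal $M \leq K$. When $q(U) \neq p$, part (i) applied to $U$ gives commutator-resistance of $U$, hence Frattini-resistance, and $\phi = \mathrm{id}_U$ works. When $q(U) = p$, I would verify the hierarchical condition at each maximal $M \leq U$ directly, using that $U^{ab} \cong \Z/p\Z \times \Z_p^{d(U)-1}$ together with a Hall--Petresco expansion to compute $p$-th powers modulo $\Phi(M)$; the hypothesis $p$ odd is crucial here to control the commutator corrections, paralleling the known failure of the analogous property when $q = 2$ (Theorem~\ref{Dem-Frattini-resistant-free}).

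The principal obstacle is the Key claim of part (i): the inheritance of the invariant ``$q \neq p$'' under passage to open subgroups is not formal but requires a delicate relator computation (or, dually, a cohomological transfer argument). For part (ii), the additional difficulty is handling the case $q(U) = p$, which requires a direct argument at the level of maximal subgroups since $U$ itself is not commutator-resistant in that case.
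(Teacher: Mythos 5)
Your part (i) is essentially the paper's argument in a slightly rearranged form, but you are proposing to re-prove, by a Schreier rewriting computation, a fact the paper simply cites: the Key claim ``$q(G)\neq p$ implies $q(U)\neq p$ for $U$ open'' is exactly \cite[\S 3.Corollary]{Labute}, which the paper invokes for all finite-index subgroups and combines with Theorem~\ref{free} for the infinite-index (free pro-$p$) subgroups; your alternative of checking only open subgroups and then invoking Corollary~\ref{commutator open} is equally valid. So for (i) the only real difference is that your ``heart of the argument'' is a known citable theorem, and your sketch of it (``forces every torsion element of $U^{ab}$ to have order at least $p^2$'') is an assertion rather than a proof; Labute's own route goes through the character $\chi\colon G\to\Z_p^{\times}$, whose restriction to $U$ has image $1+p^f\Z_p$ with $f\geq e$ when $q(G)=p^e$, $e \geq 2$, which is considerably cleaner than a relator computation.

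In part (ii) there is a genuine gap, and it sits exactly where you locate the difficulty: the case $q(U)=p$. Your plan --- verify that $(U,M,\Phi(M))$ is hierarchical for each maximal $M\leq U$ ``directly, using $U^{ab}$ together with a Hall--Petresco expansion'' --- has no mechanism for proving \emph{non}-membership $x^p\notin\Phi(M)$. Hall--Petresco expresses $x^p$ modulo commutator terms, which is useful for showing that elements \emph{do} lie in $\Phi(M)$ (this is precisely how the paper proves the negative result for $q=2$ in Theorem~\ref{Demuskin-2}, where $(x_1x_2)^2\in\Phi(M)$ is extracted from the relator); to exclude membership you need a functional on $U$ that survives passage to $\Phi(M)=M^p[M,M]$, and $U^{ab}$ alone cannot provide it, since $M$ is again a Demushkin group of large rank whose Frattini subgroup absorbs the commutator data in a relator-dependent way. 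The paper's missing idea is a Frattini-cover: for every open $U$ with presentation $U=\langle x_1,\ldots,x_d\mid x_1^q[x_1,x_2][x_3,x_4]\cdots[x_{d-1},x_d]\rangle$, $q=q(U)=p^e$, $e\geq 1$, the assignment $x_1\mapsto z_2$, $x_2\mapsto z_1$, $x_i\mapsto z_i$ ($i\geq 3$) defines a Frattini-cover onto $K=\langle z_1,\ldots,z_d\mid [z_i,z_j]=1,\ z_1^{-1}z_iz_1=z_i^{1-q}\ (2\leq i,j\leq d)\rangle\cong\Z_p\ltimes\Z_p^{d-1}$, because $[z_2,z_1]=z_2^{-q}$ kills the relator; $K$ is hereditarily uniform (after replacing $z_1$ by a suitable $\Z_p$-power, the action becomes scalar multiplication by $1+p^e$), hence strongly Frattini-resistant by Proposition~\ref{p-adic analytic Frattini-resistant}, and Proposition~\ref{F-R criterion} concludes. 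Note that this cover works uniformly for all $e\geq 1$, so your case split $q(U)\neq p$ versus $q(U)=p$ is unnecessary; and the hypothesis $p$ odd enters not through commutator bookkeeping but structurally: for $p=2$ and $q=2$ the scalar $1-q=-1$ produces exactly the inversion-type actions of Lemma~\ref{hereditarily 2 dim}, which fail Frattini-injectivity --- consistent with the failure in Theorem~\ref{Demuskin-2}. Without this (or an equivalent) device, your direct verification at maximal subgroups is not a proof.
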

\begin{proof}
$(i)$ Suppose that $q(G) \neq p$, and let $H$ be a subgroup of $G$. If $|G:H| < \infty$, then $H$ is a Demushkin group with $q(H) \neq p$ (\cite[\S 3.Corollary]{Labute}), and it follows from Corollary~\ref{CR - criteria} $(i)$ that $(H, \Phi(H), [H,H])$ is a hierarchical triple;
otherwise, $H$ is a free pro-$p$ group, and $(H, \Phi(H), [H,H])$ is a hierarchical triple by Theorem~\ref{free}. 

\medskip

$(ii)$ Suppose that $p$ is odd and $q(G)=p$. Since $G^{ab} \cong \mathbb{Z}/p\mathbb{Z} \times \mathbb{Z}_p^{d-1}$, it follows from Corollary~\ref{CR - criteria} $(i)$ that the triple $(G, \Phi(G), [G,G])$ is not hierarchical.
Therefore, $G$ is not commutator-resistant.

Let $U$ be an open subgroup of $G$. Then, $U$ is a Demushkin group and there exist elements $x_1, \ldots, x_d \in U$ such that $U=\langle x_1, \ldots x_d \mid x_1^q[x_1,x_2][x_3, x_4]\cdots [x_{d-1}, x_d] \rangle$, where $q=q(U)=p^e$ for some $e \geq 1$.
Consider the hereditarily uniform pro-$p$ group 
$$K:=\langle z_1,z_2, \ldots, z_d \mid [z_i, z_j]=1  \text{ and }  z_1^{-1}z_iz_1=z_i^{1-q} \text{ for all } 2 \leq i, j \leq d \rangle \cong \Z_p \ltimes \Z_p^{d-1}.$$
The assignment $x_1 \mapsto z_2$, $x_2 \mapsto z_1$ and $x_i \mapsto z_i$  for $3 \leq i \leq d$ defines a Frattini-cover $U \to K$. 
Since  $K$ is Frattini-resistant (Proposition~\ref{p-adic analytic Frattini-resistant}), it follows from Proposition~\ref{F-R criterion} that $G$ is strongly Frattini-resistant.
\end{proof}

Before we turn to the case ${q=2}$, we prove two auxiliary results. 

\begin{lem}
\label{normal-Frattini}
Let $G$ be a pro-$p$ group that contains two distinct open subgroups with the same Frattini. 
Then, there are two distinct open subgroups $M$ and $N$ of $G$ such that $\Phi(M)=\Phi(N)$ and $M, N \unlhd \langle M, N \rangle$.
\end{lem}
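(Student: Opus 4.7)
The plan is to base the entire argument on the following structural observation. Since $F := \Phi(H_1) = \Phi(H_2)$ is a characteristic subgroup of both $H_1$ and $H_2$, both subgroups lie in $N_G(F)$, and therefore so does $L := \langle H_1, H_2 \rangle$. In other words, $F \unlhd L$. Passing to the quotient $\bar L := L/F$, the images $\bar H_1 := H_1/F$ and $\bar H_2 := H_2/F$ are distinct elementary abelian open subgroups of $\bar L$ which generate $\bar L$ (elementary abelian precisely because $\Phi(H_i) = F$).

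The easy case is when both $H_1$ and $H_2$ are already normal in $L$---as happens, for instance, whenever $L/F$ is abelian, since then every subgroup of $\bar L$ is normal. In that case one takes $M := H_1$ and $N := H_2$: the join equals $L$ and both are normal in $L$, so the conclusion holds at once.

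In the remaining case, at least one of $H_1, H_2$ fails to be normal in $L$. Without loss of generality, assume $H_1 \not\unlhd L$. Since $L = \langle H_1, H_2 \rangle$ and $H_1 \leq N_L(H_1)$, there exists $h \in H_2 \setminus N_L(H_1)$; then $H_1^h \neq H_1$ is an open subgroup of $G$ with $\Phi(H_1^h) = F^h = F$ (using $F \unlhd L$). Thus, any failure of the normality condition produces new pairs of distinct open subgroups sharing a common Frattini. My plan is to extract the required $(M, N)$ by a minimality argument: among all pairs $(M, N)$ of distinct open subgroups of $G$ satisfying $\Phi(M) = \Phi(N)$, choose one minimizing a suitable invariant---for instance the finite index $[\langle M, N \rangle : M]$---and conclude that the minimizing pair must satisfy $M, N \unlhd \langle M, N \rangle$; otherwise the conjugation construction above would produce a pair $(M, M^h)$ of strictly smaller invariant, contradicting the choice.

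The principal obstacle will be to verify rigorously that the invariant strictly decreases in the replacement step, i.e.\ that $\langle M, M^h \rangle$ is a proper subgroup of $\langle M, N \rangle$ for a suitable choice of $h \in N \setminus N_L(M)$. This requires a careful combinatorial analysis of how the lattice of open subgroups of $L$ behaves under $L$-conjugation of the members of the pair, and is the technical heart of the argument.
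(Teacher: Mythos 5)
Your preliminary reductions are all correct: $F := \Phi(H_1)=\Phi(H_2)$ is characteristic in each $H_i$, hence normalized by $L=\langle H_1,H_2\rangle$; if a member of the pair fails to be normal in $L$, conjugation by a suitable element of the other member produces a new pair of distinct open subgroups with the same Frattini; and your extremal strategy is in essence the one the paper uses. But as written the proposal has a genuine gap, which you yourself flag: the decreasing step is never verified, and for the invariant you actually suggest, $[\langle M,N\rangle : M]$, it fails. That invariant is asymmetric in $M$ and $N$: if it is $N$ (the member absent from the invariant) that fails to be normal in $L=\langle M,N\rangle$, the replacement produces the pair $(N,N^h)$ with $h\in M$, and you would have to compare $[\langle N,N^h\rangle : N]$ with $[L:M]$ --- indices over different subgroups, with no inequality between them. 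Concretely, if $[L:M]=p$ and $[L:N]=p^5$, then even though $\langle N,N^h\rangle$ is proper in $L$, the new invariant can be as large as $p^4$, so it does not decrease. Minimizing $[\langle M,N\rangle : M]$ only forces $M\unlhd L$, never $N\unlhd L$, and the argument does not close.

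Moreover, the step you single out as ``the technical heart'' --- that $\langle M,M^h\rangle$ is properly contained in $L$ --- is in fact the easy, standard part: $M$ is a proper open subgroup of $L$, hence lies in a maximal open subgroup $K$ of $L$; maximal open subgroups of a pro-$p$ group have index $p$ and are normal, so $M^h\leq K^h=K$ and $\langle M,M^h\rangle\leq K<L$. The real subtlety is choosing the extremal scheme so that the replacement pair is comparable to the original, and the paper's proof is engineered exactly for this, via a two-stage minimization. First it minimizes $r=\bigl||G:U|-|G:V|\bigr|$ over all pairs of distinct open subgroups with equal Frattinis; since a conjugate pair $(M,M^x)$ has index difference $0$, the existence of a non-normal extremal pair forces $r=0$, so one may assume $|G:M|=|G:N|$. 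Then, with both indices in $G$ fixed (which bounds $|G:\langle U,V\rangle|$ by $|G:U_0|$, so that a maximum exists --- note that maximizing $|G:\langle U,V\rangle|$ over \emph{all} pairs would not be well-posed), it maximizes $|G:\langle U,V\rangle|$; for the extremal pair, $\langle M,M^x\rangle\leq\langle M,N\rangle$ together with extremality forces $\langle M,M^x\rangle=\langle M,N\rangle$, which by the maximal-subgroup fact above forces $M=\langle M,N\rangle$, and then equal indices give $M=N$, a contradiction. Alternatively, your single-minimization plan can be repaired by replacing your invariant with the symmetric quantity $\max\bigl([\langle M,N\rangle:M],\,[\langle M,N\rangle:N]\bigr)$: whichever member fails to be normal, the conjugate replacement strictly decreases this maximum (the two conjugates have equal index in $G$, hence equal index in their join). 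Either way, what you have submitted is a plausible plan with the decisive verification missing, not a proof.
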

\begin{proof}
Let $X$ be the set of all pairs $(U, V)$ of distinct open subgroups of $G$ with $\Phi(U)=\Phi(V)$, and let
$$r:=\min\{||G:U|-|G:V|| \mid (U,V) \in X\}.$$
Set $X_r:=\{(U, V) \in X \mid r=||G:U|-|G:V||\}$, and fix some $(U_0,V_0) \in X_r$.
Among all pairs $(U,V) \in X_r$ such that 
$|G:U|=|G:U_0|$ and $|G:V|=|G:V_0|$, choose one, say $(M,N)$, that maximizes the index $|G: \langle U, V \rangle|$.
We claim that both $M$ and $N$ are normal in $\langle M, N \rangle$. Suppose to the contrary that $M$ is not normal in $\langle M, N \rangle$. Then, there is $x \in N$ such that 
$M \neq M^x$. Moreover,
$$\Phi(M^x)=\Phi(M)^x=\Phi(N)^x=\Phi(N)=\Phi(M),$$
so $(M,M^x) \in X$. Since $|G:M|=|G:M^x|$, it follows that $r=0$, and thus $|G:M|=|G:N|$. Furthermore, by the choice of $(M,N)$, we have that
$|G:\langle M, M^x \rangle| \leq |G:\langle M, N \rangle|$. This, in turn, implies that $\langle M, M^x \rangle = \langle M, N \rangle$. However, this is only possible if $M=\langle M, N \rangle$, 
which yields a contradiction with $M \neq N$ and $|G:M|=|G:N|$.  
\end{proof}

\begin{lem}
\label{log}
Let $G$ be a pro-$2$ group, and let $M$ and $N$ be normal open subgroups of $G$ such that $G=MN$ and $\Phi(M)=\Phi(N)$.
Then 
$$\log_2(|\Phi(G):\Phi(M)|) \leq \log_2(|M:M\cap N|)\log_2(|N: M \cap N|).$$
\end{lem}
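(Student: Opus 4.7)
The plan is to identify $\Phi(G)/\Phi(M)$ with a subgroup governed by a biadditive commutator pairing and then bound its size by counting generators. Set $D := M \cap N$. Since $M$ and $N$ are open and $G = MN$, the quotients $M/D \cong G/N$ and $N/D \cong G/M$ are finite $2$-groups.

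The first step is to prove that $\Phi(G) = \Phi(M)[M,N]$. Since $(mn)^2 \equiv m^2 n^2 \pmod{[G,G]}$, the hypothesis $G = MN$ yields $G^2 [G,G] = M^2 N^2 [G,G]$. Using normality of $M$ and $N$ together with the commutator identities $[ab,c] = [a,c]^b[b,c]$ and $[a,bc] = [a,c][a,b]^c$, one obtains $[G,G] = [M,M][N,N][M,N]$. Since $[M,M] \leq \Phi(M) = \Phi(N) \geq [N,N]$, it follows that $\Phi(G) = M^2 N^2 \Phi(M)[M,N] = \Phi(M)[M,N]$, so the quotient $C := \Phi(G)/\Phi(M)$ is generated by the cosets $[m,n]\Phi(M)$ with $m \in M$ and $n \in N$.

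Next, $C$ is elementary abelian of exponent $2$: since $[M,N] \leq D \leq M$, every $[m,n]^2 \in M^2 \leq \Phi(M)$, and $[[M,N],[M,N]] \leq [M,M] \leq \Phi(M)$. I would then define
\[
\beta \colon (M/D) \times (N/D) \longrightarrow C, \qquad \beta(mD, nD) := [m,n]\Phi(M).
\]
Well-definedness follows from the containments $[D,N] \leq [N,N] \leq \Phi(M)$ and $[M,D] \leq [M,M] \leq \Phi(M)$. Biadditivity modulo $\Phi(M)$ is a consequence of the commutator identities above together with $[[M,N],M] \leq [D,M] \leq [M,M] \leq \Phi(M)$, and likewise in the second argument; moreover, the image of $\beta$ generates $C$ by the previous step.

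Finally, choose minimal generating sets $m_1 D, \ldots, m_r D$ of $M/D$ and $n_1 D, \ldots, n_s D$ of $N/D$, where $r = d(M/D) \leq \log_2 |M/D|$ and $s = d(N/D) \leq \log_2 |N/D|$ by the standard bound for finite $2$-groups. Biadditivity together with $C$ having exponent $2$ shows that $C$ is generated by the $rs$ elements $\beta(m_i D, n_j D)$, so $|C| \leq 2^{rs}$. Taking $\log_2$ yields the required inequality. The main obstacle is simply the careful bookkeeping of commutator containments in $\Phi(M)$; beyond that, no structural input is needed besides $\Phi(M) = \Phi(N)$ and the normality of $M$ and $N$.
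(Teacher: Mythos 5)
Your proof is correct and takes essentially the same route as the paper: both establish $\Phi(G) = [M,N]\Phi(M)$ (the paper via the explicit identity $g^2 = m^2n^2[n,m][[n,m],n]$ for $g = mn$, you via decomposing $G^2[G,G]$ and $[G,G]=[M,M][N,N][M,N]$) and then bound the index through the $\mathbb{F}_2$-bilinear pairing $(m(M\cap N), n(M\cap N)) \mapsto [m,n]\Phi(M)$. Your generator count $|C| \leq 2^{rs}$ is just the paper's surjection $M/(M\cap N) \otimes_{\mathbb{F}_2} N/(M\cap N) \to \Phi(G)/\Phi(M)$ phrased elementwise, so the two arguments are the same in substance.
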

\begin{proof}
For an arbitrary element $g=mn$ ($m \in M, n \in N$) of $G$, we have
$$g^2=m^2n^2[n,m][[n,m],n].$$
Clearly, $m^2, n^2 \in \Phi(M)$, and as $[n,m] \in N$, we also get $[[n,m],n] \in \Phi(M)$. 
Therefore, $g^2 \in [M,N]\Phi(M) \leq \Phi(G)$. Since the squares of elements of $G$ generate  $\Phi(G)$, it follows that $\Phi(G)=[M,N]\Phi(M)$.

From $\Phi(M)=\Phi(N) \leq M \cap N$, we deduce that $M/M \cap N$ and $N/M \cap N$ are elementary abelian $2$-groups. In addition, 
$\Phi(G)/\Phi(M)$ is also elementary abelian, and it is easily seen that 
$$M/(M \cap N) \times N/(M \cap N) \to \Phi(G)/\Phi(M), (m (M\cap N) , n (M \cap N)) \mapsto [m,n]\Phi(M),$$  
is a well-defined bilinear map. Since  $\Phi(G)=[M,N]\Phi(M)$, the induced linear transformation
$M/(M \cap N) \otimes_{\mathbb{F}_2} N/(M \cap N) \to \Phi(G)/\Phi(M)$ is surjective. Hence,
\begin{align*}
&\log_2(|\Phi(G):\Phi(M)|)=\dim_{\mathbb{F}_2}\Phi(G)/\Phi(M) \leq \dim_{\mathbb{F}_2}M/(M \cap N) \otimes_{\mathbb{F}_2} N/(M \cap N)\\
&=\dim_{\mathbb{F}_2}M/(M\cap N)\dim_{\mathbb{F}_2}N/(M\cap N)=\log_2(|M:M\cap N|)\log_2(|N: M \cap N|).
\end{align*}
\end{proof}

\begin{thm}
\label{Demuskin-2}
Let $G$ be a Demushkin group with $q(G)=2$. Then, the following assertions hold:
\begin{enumerate}[(i)]
\item If $d(G)=2$, then $G$ is not Frattini-injective.
\item If $d(G)>2$, then $G$ is Frattini-injective, but not Frattini-resistant.   
\end{enumerate} 
\end{thm}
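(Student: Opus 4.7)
The plan is to handle the two parts separately, with the Frattini-injective assertion in (ii) being the principal challenge.

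For part (i), the Demushkin relation has the concrete form $r = x_1^{2+\alpha}[x_1,x_2]$ with $\alpha \in 4\Z_2$. Rearranging $r=1$ gives $x_2^{-1} x_1 x_2 = x_1^{-(1+\alpha)}$, so $\langle x_1\rangle$ is normal in $G$ and $G/\langle x_1\rangle$ is cyclic pro-$2$ generated by the image of $x_2$. Since $G$ is torsion-free, this yields $G \cong \langle x_2\rangle \ltimes \langle x_1\rangle \cong \Z_2 \ltimes \Z_2$, where $x_2$ acts on $\langle x_1\rangle$ as multiplication by $\lambda = -(1+\alpha) \in -1 + 4\Z_2$. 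When $\alpha = 0$ this is inversion, and when $\alpha \neq 0$ one has $\lambda = -(1+2^s u)$ with $s \geq 2$ and $u \in \Z_2^\times$. In either case, replacing $x_2$ by a suitable unit power puts $G$ into the precise hypothesis of Lemma~\ref{hereditarily 2 dim}, which directly exhibits two distinct subgroups with the same Frattini.

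For part (ii), the ``not Frattini-resistant'' assertion follows from a direct use of the relation combined with the torsion in $G^{ab}$. When $d$ is odd, the relation $r = x_1^2 x_2^{2^f}[x_2,x_3]\cdots[x_{d-1},x_d]$ rearranges to $x_1^2 = [x_3,x_2]\,x_2^{-2^f}\,[x_5,x_4]\cdots[x_d,x_{d-1}]$. Setting $H_0 := \langle x_2,x_3,\ldots,x_d\rangle$, every commutator on the right lies in $[H_0,H_0]$ and $x_2^{-2^f} = (x_2^{-2^{f-1}})^2 \in H_0^2$, so $x_1^2 \in \Phi(H_0)$, while $x_1 \notin H_0$ because its image in $G/\Phi(G)$ is independent of those of $x_2,\ldots,x_d$. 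By Proposition~\ref{embedding}, $G$ is not Frattini-resistant. When $d$ is even, one repeats the argument with $y = x_1^{u'} x_3^{2^{f-1}}$, where $u' \in \Z_2^\times$ satisfies $2u' = 2+\alpha$; a Hall--Petresco expansion of $y^2$ modulo $\gamma_3(G)$, combined with the relation, expresses $y^2$ as a product of commutators and squares that all land in $\Phi(H_0)$ for a carefully chosen $H_0$ not containing $y$.

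The Frattini-injective assertion in (ii) is the delicate part. Suppose for contradiction that $H \neq K$ are finitely generated subgroups with $\Phi(H) = \Phi(K)$; after replacing $H$ by $\langle x, K\rangle$ for some $x \in H \setminus K$, we may assume $H = \langle x,K\rangle$ and $\Phi(\langle x,K\rangle) = \Phi(K)$. This forces $\langle x,K\rangle/\Phi(K)$ to be elementary abelian, and hence (a) $x^2 \in \Phi(K)$, and (b) conjugation by $x$ acts trivially on the $\F_2$-vector space $K/\Phi(K)$. The plan is to show (b) fails for every fg proper $K$ and every $x \in G \setminus K$. When $K$ is open of index $2$, $K$ is again Demushkin with $d(K) = 2(d-2)+2$; using the Schreier rewriting of $r$ and $r^{x_1}$ one computes the presentation of $K^{ab}$ explicitly and verifies that conjugation by any coset representative of $G/K$ induces a non-identity automorphism of $K/\Phi(K)$ (as a sample: for $d=3$, the involution swaps $\overline{x_2} \leftrightarrow \overline{s_1}$ and $\overline{x_3} \leftrightarrow \overline{s_2}$ among the Schreier generators, which remain distinct modulo $2K^{ab}$ thanks to the explicit form of the induced relation). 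Higher index open subgroups are treated by iterating this computation or by induction on $[G:K]$, and subgroups of infinite index are free pro-$2$ by standard Demushkin subgroup theory, so Theorem~\ref{free} (together with an extension-of-scalars argument for the ambient $x$) handles that case.

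The main obstacle is precisely the verification in the last paragraph: controlling the conjugation action of elements of $G \setminus K$ on $K/\Phi(K)$ uniformly across all finitely generated proper $K$. Once this is done for index-$2$ subgroups via the explicit Schreier analysis of the Demushkin relation, the remaining cases require either an inductive reduction to the index-$2$ case or an appeal to the non-degeneracy of the cup product on $H^1(G,\F_2)$ to rule out trivial actions on suitable cohomologically significant subspaces.
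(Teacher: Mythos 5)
Your part (i) is correct and is a legitimate variant of the paper's argument: you split $G$ explicitly as $\langle x_2\rangle\ltimes\langle x_1\rangle$ with $x_1^{x_2}=x_1^{-(1+\alpha)}$ and, after normalizing the scalar by a unit power of $x_2$, invoke Lemma~\ref{hereditarily 2 dim}, whereas the paper simply observes $G$ is $2$-adic analytic but not among the groups of Theorem~\ref{main}. Your odd-$d$ non-resistance argument is exactly the paper's. The even-$d$ sketch, however, breaks down: with $y=x_1^{u'}x_3^{2^{f-1}}$ one gets $y^2\equiv x_1^{2+\alpha}x_3^{2^f}\,[x_3^{2^{f-1}},x_1^{u'}]$ modulo terms in $\gamma_3(G)$, and after applying the relation the commutator $[x_1,x_2]^{\pm1}$ survives; since $x_1\notin H_0$ (you are forced to take $H_0$ a maximal subgroup containing $x_2,\dots,x_d$, because $y\equiv x_1 \bmod \Phi(G)$), this commutator is in general \emph{not} in $\Phi(H_0)$ — its image in $H_0/\Phi(H_0)$ is $(\overline{x_2})^{x_1}\overline{x_2}\neq 1$, as one checks already in the free group. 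The paper's proof exists precisely to dodge this term: squaring $x_1x_2$ and regrouping as $(x_1^2x_2x_1^{-2})^2\,(x_1^2[x_1,x_2])\,[x_2,x_1]^2\,[[x_2,x_1],x_2]$ makes the block $x_1^2[x_1,x_2]$ appear, which the relation (not the commutator alone) places in $\Phi(M)$.

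The Frattini-injectivity argument — the heart of (ii) — has two genuine gaps. First, the opening reduction is invalid: from $\Phi(H)=\Phi(K)$ with $x\in H\setminus K$ you cannot pass to $H=\langle x,K\rangle$ with $\Phi(\langle x,K\rangle)=\Phi(K)$. There is no reason for $K\leq H$, and even when $K\leq H$ the squares $(xk)^2=x^2k^xk$ need not lie in $\Phi(K)$, so $\Phi(\langle x,K\rangle)$ can be strictly larger; the hard configuration is two \emph{incomparable} open subgroups of equal index with equal Frattinis, which the paper handles by an extremal choice making both normal in their join (Lemma~\ref{normal-Frattini}) followed by the bilinear-map dimension count of Lemma~\ref{log}, yielding $2^n\leq n^2-n+1$ for $n=\log_2|L:H|$ and hence $n=0$. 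Your plan has no counterpart to this quantitative step. Second, your central claim — that conjugation by every $x\in G\setminus K$ acts nontrivially on $K/\Phi(K)$ — is false as stated: take any $z\in G$ of infinite order, $K=\langle z^2\rangle$ and $x=z$; then $x$ centralizes $K$. You would need the \emph{conjunction} of your conditions (a) and (b) to be impossible, a subtler statement your outline does not address. The infinite-index case is also not closed by Theorem~\ref{free}: the difficulty is that $\langle H,K\rangle$ may be open even when $H$ and $K$ have infinite index, and the paper rules this out via the Greenberg property of Demushkin groups, which your ``extension-of-scalars'' remark does not supply. Since you yourself flag the Schreier verification and the induction on index as unexecuted, the principal assertion of (ii) remains unproven in your proposal.
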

\begin{proof}
$(i)$ If $d(G)=2$, then $G=\langle x_1, x_2 \mid x_1^{2+\alpha}[x_1,x_2]\rangle$ for some  $\alpha \in 4\Z_2$. 
It is easily seen that $G$ is $2$-adic analytic, however, it is not isomorphic to any of the pro-$2$ groups listed in Theorem~\ref{main} (for instance, it is obvious that $G$ is not powerful).
Therefore, $G$ is not Frattini-injective.

\medskip

$(ii)$  Suppose that $d(G)>2$, and assume that there are two distinct finitely generated subgroups $H$ and $K$ of $G$ such that $\Phi(H)=\Phi(K)$.
Since a subgroup of $G$ is open if and only if it has open Frattini, $H$ and $K$ are either both open or they both have infinite index in $G$.
Furthermore, in case that $H$ and $K$ are open, they necessarily have the same index in $G$ (since for $U$ open, $|G:\Phi(U)|$ is a strictly increasing function of $|G:U|$).  

First suppose that $H$ and $K$ are open. By Lemma~\ref{normal-Frattini}, we may further assume that $H$ and $K$ are both normal in $L:=\langle H,K \rangle$.
Moreover, it follows from Lemma~\ref{log} that 
$$\log_2(|\Phi(L):\Phi(H)|) \leq \log_2(|H:H\cap K|)\log_2(|K: H \cap K|).$$
Hence,
\begin{align*}
\log_2(|L:\Phi(H)|)&= \log_2(|L:\Phi(L)|)+\log_2(|\Phi(L): \Phi(H)|)\\
&\leq d(L)+ \log_2(|H:H\cap K|)\log_2(|K: H \cap K|).
\end{align*}
On the other hand,
$$\log_2(|L:\Phi(H)|)= \log_2(|L:H|)+\log_2(|H: \Phi(H)|)=\log_2(|L:H|)+d(H).$$
Therefore,
$$\log_2(|L:H|)+d(H) \leq d(L)+ \log_2(|H:H\cap K|)\log_2(|K: H \cap K|).$$
Since $L$ is a Demuskin group, we have $d(H)=|L:H|(d(L)-2) +2$; in addition, $|H:H\cap K| \leq |L:K|=|L:H|$ and $|K:H\cap K| \leq |L:H|$.
Thus
$$\log_2(|L:H|)+|L:H|(d(L)-2)+2 \leq d(L)+ \log_2(|L:H|)^2.$$
Since $d(L) \geq 3$, it follows that
$$|L:H|-1\leq(|L:H|-1)(d(L)-2) \leq  \log_2(|L:H|)^2 - \log_2(|L:H|).$$
Setting $n:=\log_2(|L:H|)$, we obtain 
$$2^n \leq n^2 - n + 1.$$
However, this inequality holds only for $n=0$; so, we must have $L=H=K$, a contradiction. 

Now assume that $H$ and $K$ have infinite index in $G$. By Theorem~\ref{free}, $\langle H, K \rangle$ can not be free, so it must be open in $G$.
Moreover, since $\Phi(H)$ has finite index in both $H$ and $K$, 
it follows from the Greenberg property of Demushkin groups (\cite[Theorem~A]{SnoZa16}) that $\Phi(H)$ also has finite index in $\langle H, K \rangle$.
This implies that $H$ and $K$ are open in $G$, a contradiction.

\medskip

It remains to prove that $G$ is not Frattini-resistant. Suppose first that $d(G)$ is odd. Then
$G=\langle x_1, \ldots, x_d \mid x_1^2x_2^{2^f}[x_2,x_3]\cdots [x_{d-1}, x_d] \rangle$
for some $f= 2, 3, \ldots, \infty$. Let $H$ be the subgroup of $G$ generated by $x_2, x_3, \ldots, x_d$. Then $x_1 \notin H$, but
$x_1^2=[x_d, x_{d-1}] \cdots [x_3,x_2]x_2^{-2^f} \in \Phi(H)$. Therefore, $G$ is not Frattini-resistant. 

Now assume that $d(G)$ is even. Then 
$$G=\langle x_1, \ldots, x_d \mid x_1^{2+\alpha}[x_1,x_2]x_3^{2^f}[x_3, x_4]\cdots [x_{d-1}, x_d] \rangle$$
for some  $f= 2, 3, \ldots, \infty$  and $\alpha \in 4\mathbb{Z}_2$. Let $M$ be a maximal subgroup of $G$ that contains the elements
$x_2, x_3, \ldots, x_d$; then $x_1x_2 \notin M$. We may write the square of $x_1x_2$ as 
$$(x_1x_2)^2=x_1^2x_2^2[x_2, x_1][[x_2,x_1],x_2]=(x_1^{2}x_2x_1^{-2})^2(x_1^2[x_1,x_2])[x_2, x_1]^2[[x_2,x_1],x_2].$$
Clearly, $(x_1^{2}x_2x_1^{-2})^2 \in \Phi(M)$, and as $[x_2, x_1] \in M$, we also have $[x_2, x_1]^2 \in \Phi(M)$ and  $[[x_2,x_1],x_2] \in \Phi(M)$. 
Furthermore, $x_1^{\alpha} \in \Phi(M)$ (since $\alpha \in 4\Z_2$), and it follows from the relation of $G$ that
$$x_1^2[x_1, x_2]=x_1^{-\alpha }[x_d,x_{d-1}] \cdots [x_4, x_3]x_3^{-2^f} \in \Phi(M).$$
Therefore, $(x_1x_2)^2 \in \Phi(M)$, which proves that $G$ is not Frattini-resistant.

\end{proof}

\section{Maximal pro-$p$ Galois groups}
\label{Maximal pro-$p$ Galois groups}

Recall that we denote by $G_k=\textrm{Gal}(k_s/k)$ and ${G_k(p) =  \textrm{Gal}(k(p)/k)}$ the absolute Galois group and the maximal pro-$p$ Galois group of a field $k$, respectively.
% Delineating absolute Galois groups of fields within the category of profinite groups is one of the major problems of Galois theory. Already determining which pro-$p$ groups can be realized as maximal pro-$p$ Galois groups, presents a remarkable challenge. A more approachable problem is the classification of maximal pro-$p$ Galois groups of fields that contain a primitive $p$th root of unity.
%A seemingly more approachable problem is to characterize, for a prime number p, the pro-p groups
%G which are realizable as the maximal pro-p Galois group GF (p) of some field
%F containing a root of unity of order p.

\begin{thm}
\label{Galois-Frattini-resistant}
Let $p$ be an odd prime, and let $k$ be a field that contains a primitive $p$th root of unity. Then $G_k(p)$ is a strongly Frattini-resistant pro-$p$ group. 
\end{thm}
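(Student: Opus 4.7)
The plan is to apply Corollary~\ref{open-maximal}: it suffices to prove, for every open subgroup $U$ of $G_k(p)$ and every (necessarily normal) maximal subgroup $M$ of $U$, that the triple $(U, M, \Phi(M))$ is hierarchical. By Galois correspondence, $U = G_K(p)$ for some intermediate field $K$ with $k \subseteq K \subseteq k(p)$, and since $\mu_p \subseteq k \subseteq K$, Kummer theory identifies $M$ with the stabilizer of a cyclic degree-$p$ extension $L = K(\sqrt[p]{a})$, where $a \in K^{\times} \setminus (K^{\times})^p$. The subgroup $\Phi(M)$ corresponds to the maximal elementary abelian $p$-extension $L^{(1)}$ of $L$ inside $k(p)$. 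So the task becomes: for $\sigma \in U \setminus M$, show that $\sigma^p$ acts non-trivially on $L^{(1)}$.

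I would settle this by an explicit Kummer computation. Pick a $p^2$-th root $\alpha := \sqrt[p^2]{a}$ inside $k(p)$ (available since $K(\alpha, \mu_{p^2})/k$ is a finite $p$-extension, and $\mu_{p^2} \subseteq k(p)$), and write $\sigma(\alpha) = \eta \alpha$ for some $\eta \in \mu_{p^2}$. Because $\mu_p \subseteq k$ and $p$ is odd, $1 + p\Z_p$ is a pro-$p$ group and the cyclotomic character descends to a homomorphism $\chi : G_k(p) \to 1 + p\Z_p$ with $\sigma(\zeta) = \zeta^{\chi(\sigma)}$ for all $\zeta \in \mu_{p^\infty}$. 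A straightforward induction yields
\[
\sigma^p(\alpha) = \eta^{N_p}\alpha, \qquad N_p := 1 + \chi(\sigma) + \chi(\sigma)^2 + \cdots + \chi(\sigma)^{p-1}.
\]
Writing $\chi(\sigma) = 1 + pc$ and expanding gives $N_p \equiv p \pmod{p^2}$; since $\eta$ has order dividing $p^2$, this simplifies to $\sigma^p(\alpha) = \eta^p \alpha$. Finally, $\eta^p = \sigma(\sqrt[p]{a})/\sqrt[p]{a}$, which is a non-trivial $p$-th root of unity precisely because $\sigma \notin M$, so $\sigma^p(\alpha) \neq \alpha$.

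It remains to confirm that $\alpha \in L^{(1)}$, i.e., that $L(\alpha)/L$ has degree exactly $p$. The extension is Galois of degree dividing $p$ since $\alpha^p = \sqrt[p]{a} \in L$ and $\mu_p \subseteq L$. A norm computation shows $N_{L/K}(\sqrt[p]{a}) = a$ (the factor $\zeta^{p(p-1)/2}$ being trivial for odd $p$), so $\sqrt[p]{a} \in (L^{\times})^p$ would force $a \in (K^{\times})^p$, contradicting the choice of $a$. Hence $L(\alpha) \subseteq L^{(1)}$, and the non-triviality of the $\sigma^p$-action on $\alpha$ produces $\sigma^p \notin \Phi(M)$, as required.

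The step I expect to demand the most care is the $p$-adic congruence $N_p \equiv p \pmod{p^2}$: oddness of $p$ enters essentially through $(p-1)/2 \in \Z$, and for $p = 2$ one only has $N_2 = 1 + \chi(\sigma)$, which forces $\chi(\sigma) \in 1 + 4\Z_2$ in order to get $N_2 \equiv 2 \pmod 4$. This explains why the $p = 2$ analogue of the theorem requires the hypothesis $\sqrt{-1} \in k$, which is precisely what ensures the cyclotomic character lands in $1 + 4\Z_2$.
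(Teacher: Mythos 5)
Your proposal is correct and follows essentially the same route as the paper: reduce via Corollary~\ref{open-maximal} to maximal subgroups of open subgroups, adjoin $\alpha=\sqrt[p^2]{a}$ to the fixed field of $\Phi(M)$, and show $\sigma^p$ moves $\alpha$ via the congruence $1+\chi(\sigma)+\cdots+\chi(\sigma)^{p-1}\equiv p \pmod{p^2}$, which is exactly the paper's binomial computation with $t=\chi(\sigma) \bmod p^2$ (and the same use of oddness through $\tfrac{p-1}{2}\in\Z$). The only deviation is your norm argument showing $[L(\alpha):L]=p$, which is superfluous: degree dividing $p$ already places $\alpha$ in the fixed field of $\Phi(M)$, which is all the paper uses.
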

\begin{proof}
By Corollary~\ref{open-maximal}, it suffices to prove that for every subgroup $H$ of $G_k(p)$ and every maximal subgroup $M$ of $H$, the triple $(H, M, \Phi(M))$ is hierarchical.
Let $F$, $K$ and $L$ be the fixed fields of $H$, $M$ and $\Phi(M)$, respectively. Then $K=F(\sqrt[p]{a})$ for some $a \in F^{\times}$ (since $F$ contains a primitive $p$th root of unity). 
Let $b \in k(p)$ be a root of the polynomial $X^p-\sqrt[p]{a}$; then $[K(b):K]$ divides $p$, and hence $b \in L$.

Let $\sigma \in H \setminus M$. We claim that $\sigma^p$ does not fix $b$, and consequently $\sigma^p \notin \Phi(M)$.
 Since the roots of the polynomial $X^{p^2}-a$ are $b\zeta_{p^2}^i$  ($0 \leq i \leq p^2-1$), where 
$\zeta_{p^2}$ is a primitive $p^2$th root of unity, we have $\sigma(b)=b\zeta_{p^2}^s$ for some $0 \leq s \leq p^2-1$. 
Furthermore, $s$ is relatively prime to $p$, since $p \mid s$ would imply  
\[\sigma(\sqrt[p]{a})=\sigma(b^p)=\sigma(b)^p=b^p\zeta_{p^2}^{sp}=\sqrt[p]{a},\]
which yields a contradiction with $\sigma \notin M$.

Consider the action of $\sigma$ on the group $\mu_{p^2}=\langle \zeta_{p^2} \rangle$ of $p^2$th roots of unity. Since $\zeta_{p^2}^p \in k$, we have $\sigma(\zeta_{p^2}^p)=\zeta_{p^2}^p$ , and thus 
$\sigma(\zeta_{p^2})=\zeta_{p^2}^t$ for some $1 \leq t \leq p^2-1$ with $t  \equiv 1 (\textrm{mod } p)$, i.e., $t=1+lp$ for some $0 \leq l \leq p-1$. 
Moreover, by a simple calculation, we obtain
\[\sigma^p(b)=b\zeta_{p^2}^{s+ts+ \ldots +t^{p-1}s}.\] 
We need to prove that $p^2$ does not divide $s+ts+ \ldots +t^{p-1}s=s(1+t+ \ldots+t^{p-1})$. Since $p$ does not divide $s$, it suffices to show that
$p^2$ does not divide $1+t+ \ldots+t^{p-1}$. This is clearly the case if $t=1$, so we may assume that $t=1+lp$ for some $1 \leq l \leq p-1$. Then
\[1+t+\ldots +t^{p-1}=\frac{t^p-1}{t-1}=\frac{\sum_{i=1}^p\binom{p}{i}(lp)^i}{lp}=p+p^2[\frac{p-1}{2}l+\sum_{i=3}^p\binom{p}{i}p^{i-3}l^{i-1}].\]
Since $p$ is odd, it follows that $p^2$ does not divide $1+t+ \ldots+t^{p-1}$. 
\end{proof}

Clearly, the assumption that $p$ is an odd prime is essential in Theorem~\ref{Galois-Frattini-resistant} (for instance, $\mathbb{C}/\mathbb{R}$ is a maximal $2$-extension with Galois group cyclic of order two, which is obviously not  Frattini-injective).
However, the condition on the prime is used only in the last line of the proof (when $t \neq 1$). Hence, under the stronger assumption that $k$ contains a primitive $p^2$th root of unity, the theorem holds also for $p=2$.
In fact, in that case we can say more. 

\begin{thm}  \label{p^2 root}
Let $k$ be a field that contains a primitive $p^2$th root of unity. Then $G_k(p)$ is strongly commutator-resistant. 
\end{thm}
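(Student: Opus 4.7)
The plan is to apply Corollary~\ref{commutator open}, which reduces the task to verifying that $(U, \Phi(U), [U,U])$ is a hierarchical triple for every open subgroup $U$ of $G_k(p)$. Any such $U$ equals $G_F(p)$ for a finite $p$-extension $F/k$ contained in $k(p)$, and such an $F$ automatically contains a primitive $p^2$th root of unity. So it suffices to establish the hierarchical condition for $G_F(p)$ under the standing hypothesis on the base field, which from now on I call $F$.

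By Proposition~\ref{extension}, this amounts to showing that the natural projection ${\pi \colon \Z_p/p^2\Z_p \to \Z_p/p\Z_p}$ induces a surjection
\[
\pi^{*} \colon H^1(G_F(p), \Z_p/p^2\Z_p) \longrightarrow H^1(G_F(p), \Z_p/p\Z_p),
\]
with trivial coefficient action. Since the coefficient groups are finite $p$-groups, continuous $1$-cocycles from $G_F$ into them factor through the maximal pro-$p$ quotient; hence one may replace $G_F(p)$ with the absolute Galois group $G_F$ in the display above. The characteristic of $F$ is necessarily prime to $p$ (a primitive $p^2$th root of unity lives in $F$), so Kummer theory is available.

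Now I fix a primitive $p^2$th root of unity $\zeta \in F$. The assignment $1 \mapsto \zeta$ furnishes a $G_F$-equivariant isomorphism $\Z_p/p^n\Z_p \cong \mu_{p^n}$ for $n \in \{1,2\}$ (the source has trivial action by assumption, the target because $\zeta \in F$). Under these identifications, $\pi$ corresponds to the canonical surjection $\mu_{p^2} \twoheadrightarrow \mu_p$, while Hilbert~$90$ yields canonical isomorphisms
\[
H^1(G_F, \mu_{p^n}) \cong F^{\times}/(F^{\times})^{p^n}, \qquad n=1,2,
\]
that are functorial in the coefficients. Hence $\pi^{*}$ is transported to the evident projection $F^{\times}/(F^{\times})^{p^2} \twoheadrightarrow F^{\times}/(F^{\times})^{p}$, which is manifestly surjective.

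No substantial obstacle is expected. The conceptual point is that having a primitive $p^2$th root of unity in every intermediate fixed field lets one linearize the cohomological lifting problem via Kummer theory, where surjectivity is tautological. The only care required is the standard bookkeeping that identifies $\pi^{*}$ with the natural projection between Kummer quotients under the chosen trivialisations of $\mu_{p^n}$; once that is in hand, Corollary~\ref{commutator open} directly upgrades the conclusion from open subgroups to arbitrary ones.
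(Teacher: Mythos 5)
Your proof is correct, and at its core it is the same Kummer-theoretic mechanism as the paper's: the hypothesis $\zeta_{p^2}\in k$ makes the mod-$p^2$ coefficients a trivial module, and the lifting criterion of Proposition~\ref{extension} then reduces to the tautological surjectivity of $F^{\times}/(F^{\times})^{p^2}\to F^{\times}/(F^{\times})^{p}$. The packaging differs, though. The paper does not reduce to open subgroups at all: it applies Proposition~\ref{extension} directly to an \emph{arbitrary} subgroup $H$ with fixed field $F$ (which still contains $\zeta_{p^2}$, whether or not $F/k$ is finite), and lifts a given epimorphism $\phi\colon H\to \Z_p/p\Z_p$ by hand — the kernel cuts out $K=F(\sqrt[p]{a})$, and one embeds $K$ into the cyclic degree-$p^2$ extension $L=F(\sqrt[p^2]{a})$, composing restriction $H\to\mathrm{Gal}(L/F)$ with an isomorphism onto $\Z_p/p^2\Z_p$. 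That is exactly your surjectivity statement read through Hilbert~90, but realized by an explicit field construction with no exact sequences. Your route instead invokes Corollary~\ref{commutator open} to pass to open subgroups, identifies each open $U$ with $G_F(p)$ for $F/k$ finite, inflates to the absolute Galois group $G_F$, and runs the Kummer sequence functorially; this buys a formal, computation-free surjectivity at the cost of two standard background facts (that $U=\mathrm{Gal}(k(p)/F)$ is the full maximal pro-$p$ Galois group of $F$, and the inflation identification of $H^1$ with trivial finite $p$-coefficients), the first of which the paper also uses implicitly elsewhere. Two small points of bookkeeping: your identification $\Z_p/p^n\Z_p\cong\mu_{p^n}$ should send $1\mapsto\zeta^{p}$ when $n=1$ (with $1\mapsto\zeta$ only for $n=2$) so that $\pi$ matches the $p$-power map $\mu_{p^2}\twoheadrightarrow\mu_p$; and the reduction to open subgroups, while valid, is unnecessary here precisely because the fixed field of every subgroup of $G_k(p)$ already contains $\zeta_{p^2}$.
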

\begin{proof}
Let $H$ be a subgroup of $G_k(p)$ with fixed field $F$, and let $\phi:H \to \Z_p/p\Z_p$ be an epimorphism. Denote by $K$ the fixed field of $\textrm{ker } \phi$.
Then $K=F(\sqrt[p]{a})$ for some $a \in F^{\times}$, and $\phi$ induces an isomorphism $\bar{\phi}:H/\textrm{ker }\phi \cong \textrm{Gal}(K/F) \to \Z_p /p\Z_p$.   
Since $F$ contains a $p^2$th root of unity, there is a field $L$ containing $K$ such that
the extension $L/F$ is cyclic of degree $p^2$ (take $L:=F(\sqrt[p^2]{a})$). By composing the restriction homomorphism $H \to \textrm{Gal}(L/F)$ with a suitable isomoprhism 
from $\textrm{Gal}(L/F)$ to $\Z_p/p^2\Z_p$, we obtain an epimorphism $\psi: H \to \Z_p/p^2\Z_p$ such that $\pi \circ \psi = \phi$, where $\pi: \Z_p/p^2\Z_p \to \Z_p/p\Z_p$ is the natural projection.
It follows from Proposition~\ref{extension} that $G_k(p)$ is strongly commutator resistant.
\end{proof}

For a field $F$ containing a primitive $p$th root of unity, we denote by $F(\sqrt[p]{F^{\times}})$ the maximal $p$-Kummer extension of $F$.
In terms of field extensions, the Frattini-resistance of $G_k(p)$ takes the following form.

\begin{cor}
 Let $k$ be a field that contains a primitive $p$th root of unity.  If $p=2$, in addition, assume that $ \sqrt{-1}  \in k$. Then, for all intermediate fields $F$ and $K$ of the extension $k(p)/k$, 
\[F \subseteq K \iff  F(\sqrt[p]{F^{\times}}) \subseteq K(\sqrt[p]{K^{\times}})\]
Moreover, $F/k$ is Galois if and only if $F(\sqrt[p]{F^{\times}})/k$ is Galois. 
\end{cor}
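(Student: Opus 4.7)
The plan is to translate the statement via the Galois correspondence and then invoke the strong Frattini-resistance of $G_k(p)$ established in the preceding theorem. For each intermediate field $F$ of $k(p)/k$, set $H_F := \textrm{Gal}(k(p)/F)$; the Galois correspondence provides an order-reversing bijection between intermediate fields of $k(p)/k$ and closed subgroups of $G_k(p)$.

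The key identification is that $F(\sqrt[p]{F^{\times}})$ is the fixed field of $\Phi(H_F)$ inside $k(p)$. Indeed, since $k \subseteq F$ and $k$ contains a primitive $p$th root of unity, Kummer theory describes the maximal elementary abelian $p$-extension of $F$ as $F(\sqrt[p]{F^{\times}})$; this extension lies inside $k(p)$ because $k(p)$ is the maximal pro-$p$ extension of $k$. Under the Galois correspondence, the maximal elementary abelian $p$-extension is fixed by $\Phi(H_F)$, since $H_F/\Phi(H_F)$ is the maximal elementary abelian $p$-quotient of $H_F$. Hence $\textrm{Gal}(k(p)/F(\sqrt[p]{F^{\times}})) = \Phi(H_F)$.

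With these translations in hand, $F \subseteq K$ is equivalent to $H_K \subseteq H_F$, and $F(\sqrt[p]{F^\times}) \subseteq K(\sqrt[p]{K^\times})$ is equivalent to $\Phi(H_K) \subseteq \Phi(H_F)$. The first equivalence of the corollary therefore reduces to
\[ H_K \subseteq H_F \iff \Phi(H_K) \subseteq \Phi(H_F), \]
which is the content of the strong version of Proposition~\ref{embedding} applied to the strongly Frattini-resistant group $G_k(p)$. For the moreover part, $F/k$ is Galois iff $H_F \unlhd G_k(p)$, and likewise $F(\sqrt[p]{F^{\times}})/k$ is Galois iff $\Phi(H_F) \unlhd G_k(p)$. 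One direction is immediate since $\Phi(H_F)$ is characteristic in $H_F$; for the converse, mimic the proof of Lemma~\ref{normal}: if $x \in G_k(p)$ normalizes $\Phi(H_F)$, then $\Phi(x^{-1}H_Fx) = x^{-1}\Phi(H_F)x = \Phi(H_F)$, and strong Frattini-injectivity (which follows from strong Frattini-resistance via the strong form of Corollary~\ref{FR}) forces $x^{-1}H_Fx = H_F$.

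Since the heavy lifting was already carried out in proving the strong Frattini-resistance of $G_k(p)$, there is no substantial obstacle here; the only point requiring care is the Kummer-theoretic identification of $\Phi(H_F)$ with $\textrm{Gal}(k(p)/F(\sqrt[p]{F^{\times}}))$, which relies on the fact that a primitive $p$th root of unity lies in $F$ because it already lies in $k \subseteq F$.
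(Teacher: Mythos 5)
Your proposal is correct and follows essentially the same route as the paper: identify $F(\sqrt[p]{F^{\times}})$ as the fixed field of $\Phi(H_F)$ via Kummer theory, then translate both equivalences through the Galois correspondence using the strong Frattini-resistance of $G_k(p)$ (Theorem~\ref{Galois-Frattini-resistant} for $p$ odd, Theorem~\ref{p^2 root} for $p=2$) together with the normalizer argument of Lemma~\ref{normal}. If anything, you are slightly more careful than the paper, which cites Lemma~\ref{normal} directly even though that lemma is stated for finitely generated subgroups, whereas you correctly rerun its proof using strong Frattini-injectivity since $H_F$ need not be finitely generated.
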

\begin{proof}
Note that if $H$ is a subgroup of $G_k(p)$ with fixed field $F$, then $F(\sqrt[p]{F^{\times}})$ is the fixed field of $\Phi(H)$. Hence, by Galois correspondence, the corollary follows from Theorem~\ref{Galois-Frattini-resistant}, Theorem~\ref{p^2 root}
and Lemma~\ref{normal}.   
\end{proof}

As an immediate consequence of the torsion-freeness of Frattini-injective pro-$p$ groups, we obtain Becker’s restriction on the finite subgroups of $G_k(p)$ (\cite{Becker}).  

\begin{cor}
Let $k$ be a field that contains a primitive $p$th root of unity. 
\begin{enumerate}[(i)]
\item If $p$ is odd or $\sqrt{-1} \in k$, then $G_k(p)$ is torsion free and has the unique extraction of roots property. Furthermore, for every finitely generated subgroup $H$ of $G_k(p)$ and every open subgroup $U$ of $H$, 
we have $d(U) \geq d(H)$.
\item If $p=2$, then every non-trivial finite subgroup of $G_k(p)$ is cyclic of order two.
\end{enumerate}
\end{cor}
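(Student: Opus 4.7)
Part (i) is essentially a bookkeeping exercise that collects everything already established. The plan is first to observe that the hypothesis ``$p$ odd, or $p=2$ with $\sqrt{-1}\in k$'' puts us in the setting of either Theorem~\ref{Galois-Frattini-resistant} or Theorem~\ref{p^2 root} (note that for $p=2$, $\sqrt{-1}\in k$ exactly means $k$ contains a primitive $p^2$th root of unity); in both cases $G_k(p)$ is strongly Frattini-resistant, since strong commutator-resistance implies strong Frattini-resistance by the results of Section~\ref{Hierarchical triples}. By Corollary~\ref{FR}, $G_k(p)$ is then Frattini-injective, and I invoke in turn: the opening remark of Section~\ref{basic results} (Frattini-injective pro-$p$ groups are torsion-free); Corollary~\ref{virtually-abelian}~(iv) (unique extraction of roots); and Proposition~\ref{monotone} (the inequality $d(U)\geq d(H)$ for open subgroups of finitely generated subgroups).

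For part (ii), my plan is to reduce to part (i) applied to the field $F:=k(\sqrt{-1})$. If $\sqrt{-1}\in k$, part (i) already gives that $G_k(2)$ is torsion-free, so every finite subgroup is trivial. Otherwise $[F:k]=2$, and the strategy is to exhibit $G_F(2)$ as a normal, torsion-free, index-$2$ subgroup of $G_k(2)$; any finite subgroup $H\leq G_k(2)$ must then intersect $G_F(2)$ trivially, forcing $H$ to embed into the quotient $G_k(2)/G_F(2)\cong \mathbb{Z}/2\mathbb{Z}$ and hence to be cyclic of order at most two.

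The main obstacle, and the only step that is not entirely formal, is the field-theoretic identification $k(2)=F(2)$, which is what realizes $G_F(2)$ as a subgroup of $G_k(2)$. The inclusion $k(2)\subseteq F(2)$ is immediate since $k(2)/F$ is a pro-$2$ extension. For the reverse inclusion I would take an arbitrary finite Galois $2$-extension $L/F$, pick a lift $\sigma$ of the non-trivial element of $\mathrm{Gal}(F/k)$ to $\mathrm{Gal}(k_s/k)$, and argue that the compositum $\tilde L:=L\cdot\sigma(L)$ is Galois over $k$ with $[\tilde L:F]$ dividing $[L:F]^2$, hence of $2$-power degree over $k$; consequently $L\subseteq \tilde L\subseteq k(2)$. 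Granted this equality, the rest of part (ii) is the routine intersection argument sketched above.
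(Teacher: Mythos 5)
Your proof is correct, and for part (ii) it takes a genuinely different route from the paper's. Part (i) is exactly the paper's argument: the paper's proof just says ``(i) follows from the results of Section~\ref{basic results}'', with the Frattini-injectivity of $G_k(p)$ supplied by Theorem~\ref{Galois-Frattini-resistant} (for $p$ odd) and Theorem~\ref{p^2 root} (for $p=2$ with $\sqrt{-1}\in k$, which is indeed the same as containing a primitive $4$th root of unity), followed by Corollary~\ref{virtually-abelian} and Proposition~\ref{monotone}, just as you organize it. For (ii), the paper argues \emph{locally}: given a nontrivial finite subgroup $H$ with fixed field $F$, it applies Theorem~\ref{p^2 root} to $F(\sqrt{-1})$ to see that $\mathrm{Gal}(k(2)/F(\sqrt{-1}))$ is both torsion-free and finite, hence trivial, so $k(2)=F(\sqrt{-1})$ and $H=\mathrm{Gal}(F(\sqrt{-1})/F)$ has order two. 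You instead argue \emph{globally}: apply part (i) once to $F=k(\sqrt{-1})$, realize $G_F(2)=\mathrm{Gal}(k(2)/F)$ as a torsion-free normal open subgroup of index $2$ (normal because $F/k$ is Galois), and embed any finite subgroup of $G_k(2)$ into the quotient $\mathbb{Z}/2\mathbb{Z}$ via the trivial-intersection argument. The trade-off: the paper's local version proves strictly more --- the fixed field of \emph{every} nontrivial finite subgroup satisfies $k(2)=F(\sqrt{-1})$, a Becker/Artin--Schreier-type field-theoretic statement --- while your version only yields the group-theoretic bound, but is cleaner as pure group theory and, to its credit, makes explicit the identification $k(2)=F(2)$ for intermediate fields (your compositum argument with $\tilde L=L\cdot\sigma(L)$ is sound: any $k$-embedding of $\tilde L$ permutes $\{L,\sigma(L)\}$ by normality of $L/F$, so $\tilde L/k$ is Galois of $2$-power degree). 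The paper uses this same standard fact tacitly when it treats $\mathrm{Gal}(k(2)/F(\sqrt{-1}))$ as the maximal pro-$2$ Galois group of $F(\sqrt{-1})$. Both proofs implicitly live in $\mathrm{char}\,k\neq 2$, which is forced by the hypothesis of a primitive $p$th root of unity, so there is no gap on that score either.
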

\begin{proof}
$(i)$ follows from the results of Section~\ref{basic results}. For the proof of $(ii)$, suppose that $H$ is a non-trivial finite subgroups of $G_k(2)$ with fixed field $F$.
It follows from Theorem~\ref{p^2 root} that $k(2)=F(\sqrt{-1})$. Hence, $H$ is a cyclic group of order two.
\end{proof}

Before we turn to the more general context of $1$-smooth cyclotomic pro-$p$ pairs, we give the proof of Theorem~\ref{absolute Galois}.

\begin{proof}[Proof of Theorem~\ref{absolute Galois}]
First suppose that $p$ is an odd prime. Since every pro-$p$ subgroup of the absolute Galois group $G_k$ is contained in a $p$-Sylow subgroup, it suffices to prove that every $p$-Sylow subgroup $P$ of $G_k$ is strongly Frattini-resistant.
If $k$ is of characteristic $p$, then $P$ is a free pro-$p$ group by \cite[Theorem~6.1.4]{NeSchWi08}. Hence, $P$ is strongly Frattini-resistant by Theorem~\ref{free}. 
If $k$ is of characteristic different than $p$, then the fixed field $F$ of $P$ contains a primitive $p$th root of unity and $P=G_F(p)$; in this case, the claim follows from Theorem~\ref{Galois-Frattini-resistant}.
For $p=2$, the result follows from Theorem~\ref{p^2 root}. 
\end{proof}

\subsection{$1$-smooth pro-$p$ groups}

Following \cite{Ef} and \cite{EfQu19}, we call a pair ${\mathcal{G}=(G, \theta)}$ consisting of a pro-$p$ group $G$ and a homomorphism $\theta:G \to 1+p\Z_p$ (where $1+p\Z_p$ is the group of $1$-units of $\Z_p$)  a \emph{cyclotomic pro-$p$ pair}.

Given a cyclotomic pro-$p$ pair $\mathcal{G}=(G, \theta)$, let $\Z_p(1)$ be the $G$-module with underlying abelian group $\Z_p$ and $G$ action defined by
$g \cdot v = \theta(g)v$ for all $g \in G$ and $v \in \Z_p$. The pair $\mathcal{G}$ is said to be \emph{$1$-smooth} (or $1$-cyclotomic; see  \cite{CleFlo}, \cite{EfQu19}, \cite{QuWe18}, \cite{Qu20a} and \cite{Qu20b}), 
if for every open subgroup $U$ of $G$ and every $n \in \mathbb{N}$, the quotient map $\Z_p(1)/p^n\Z_p(1) \to \Z_p(1)/p\Z_p(1)$ (considered as a $U$-module homomorphism) 
induces an epimorphism
\[H^1(U, \Z_p(1)/p^n\Z_p(1)) \to H^1(U, \Z_p(1)/p\Z_p(1)).\] 

Let $k$ be a field containing a primitive $p$th root of unity, and let $\mu_{p^\infty}$ be the group of all roots of unity in $k(p)$ of order a power of $p$. 
The cyclotomic pro-$p$ character $\theta_{k,p}:G_k(p) \to 1+p\Z_p$ is defined by $\sigma(\zeta)=\zeta^{\theta_{k,p}(\sigma)}$ for all $\sigma \in G_k(p)$ and $\zeta \in \mu_{p^{\infty}}$.
One can readily prove (using Hilbert $90$) that $(G_k(p), \theta_{k,p})$ is a $1$-smooth cyclotomic pro-$p$ pair. Moreover, if $p=2$ and $\sqrt{-1} \in k$, then $\textrm{Im}(\theta_{k,2}) \leq 1+4\Z_2$.

\begin{lem}
\label{quasi-smooth}
Let $G$ be a pro-$p$ group. Suppose that the abelian group $\Z_p/p^2\Z_p$ can be endowed with a structure of a (topological) $G$-module in such a way that for every open subgroup $U$ of $G$,
the quotient homomorphism  $\pi:\Z_p/p^2\Z_p \to \Z_p/p\Z_p$  induces an epimorphism  
\[\pi^*:H^1(U, \Z_p/p^2\Z_p) \to H^1(U, \Z_p/p\Z_p).\]
Then, the following assertions hold:
\begin{enumerate}[(i)]
\item If $\Z_p/p^2\Z_p$ is the trivial $G$-module, then $G$ is strongly commutator-resistant.
\item Some maximal subgroup of $G$ is strongly commutator-resistant.
\item If $p$ is an odd prime, then $G$ is strongly Frattini-resistant. 
\end{enumerate}
\end{lem}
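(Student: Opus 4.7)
The three parts will be handled as a cascade. A preliminary remark that I will use throughout: the $G$-module structure on $\Z_p/p^2\Z_p$ is encoded by a character $\chi\colon G\to (\Z_p/p^2\Z_p)^\times$, and since $G$ is pro-$p$, its image lies in the unique Sylow $p$-subgroup $1+p\Z_p/p^2\Z_p$. In particular, the induced action on the quotient $\Z_p/p\Z_p$ is trivial, and the kernel $M:=\ker\chi$ has index at most $p$ in $G$.

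For (i), the trivial-module hypothesis is exactly the extension condition of Proposition~\ref{extension} applied to each open subgroup $U$ of $G$; hence $(U,\Phi(U),[U,U])$ is hierarchical for every open $U$, and Corollary~\ref{commutator open} promotes this to strong commutator-resistance of $G$. For (ii), either $M=G$ (in which case (i) applies to $G$ directly), or $M$ is a maximal subgroup of $G$ acting trivially on $\Z_p/p^2\Z_p$; in the latter case, every open subgroup of $M$ is open in $G$ and inherits a trivial-module restriction of the hypothesis, so (i) applied to $M$ yields the desired conclusion.

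For (iii), my plan is to invoke Corollary~\ref{open-maximal}: I will show that $(U,N,\Phi(N))$ is hierarchical for every open $U\leq G$ and every maximal subgroup $N\leq U$. Fix such $U$ and $N$, take $x\in U$ with $x^p\in\Phi(N)$, and suppose for contradiction that $x\notin N$. The canonical projection $\bar{\phi}\colon U\to U/N\cong\Z_p/p\Z_p$ lifts, by the hypothesis for $U$, to a continuous $1$-cocycle $f\colon U\to \Z_p/p^2\Z_p$ satisfying $f(gh)=f(g)+\chi(g)f(h)$ and $f\equiv\bar{\phi}\pmod p$. Writing $\chi(g)=1+p\bar{\chi}(g)$ with $\bar{\chi}\colon U\to\Z_p/p\Z_p$ a homomorphism, the cocycle identity unrolls to $f(g^p)=f(g)\sum_{i=0}^{p-1}\chi(g)^i$; the binomial expansion $\chi(g)^i\equiv 1+ip\bar{\chi}(g)\pmod{p^2}$ together with the fact that $\binom{p}{2}$ is divisible by $p$ (this is where $p$ odd is essential) then yields the key identity
\[ f(g^p)\equiv p\cdot f(g)\pmod{p^2}\qquad\text{for every }g\in U. \]
Since $f(x)\equiv\bar{\phi}(x)\not\equiv 0\pmod p$, this already gives $f(x^p)\not\equiv 0\pmod{p^2}$.

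The hard part will be to prove the opposing vanishing $f|_{\Phi(N)}=0$, which produces the sought contradiction. Because $\bar{\phi}|_N=0$, the cocycle values $f(N)$ lie in $p\Z_p/p^2\Z_p$, and the displayed identity immediately gives $f(a^p)=0$ for every $a\in N$. An analogous unrolling of $f$ on a commutator $[a,b]$ with $a,b\in N$ produces an expression in which every summand factors as a product of $\chi(a)^{\pm 1}-1$ or $\chi(b)^{\pm 1}-1$ (all lying in $p\Z_p/p^2\Z_p$) against $f(a)$ or $f(b)$ (also in $p\Z_p/p^2\Z_p$); the resulting terms live in $p^2\Z_p/p^2\Z_p=0$, so $f([a,b])=0$. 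Continuity and the cocycle identity then extend the vanishing to the closed subgroup $\Phi(N)$, contradicting $f(x^p)\neq 0$ and forcing $x\in N$. I note in passing that the argument collapses for $p=2$ because $\sum_{i=0}^{1}\chi(g)^i=2+2\bar{\chi}(g)$ can vanish modulo $4$, which is consistent with the fact that (iii) is stated only for $p$ odd.
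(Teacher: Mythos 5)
Your proposal is correct and follows essentially the same route as the paper's own proof: part (i) via Proposition~\ref{extension} together with Corollary~\ref{commutator open}, part (ii) via the kernel of the character $\chi$ encoding the action (of index at most $p$ since the image lies in $1+p\Z_p/p^2\Z_p$), and part (iii) by lifting the projection $U \to U/N$ to a $1$-cocycle $f$ with values in $\Z_p/p^2\Z_p$ and exploiting the identity $f(g^p)=pf(g)$, valid for odd $p$ because $p \mid \binom{p}{2}$. The only cosmetic divergence is in showing $f$ kills $\Phi(N)$: the paper notes that $f$ restricted to $N$ lands in the trivial module $p\Z_p/p^2\Z_p \cong \Z_p/p\Z_p$ and is therefore a homomorphism, which must kill $\Phi(N)$, whereas you check the vanishing on $p$th powers and commutators by direct computation and then use that the vanishing set of a cocycle is a closed subgroup --- both justifications are sound.
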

\begin{proof}
If $G$ acts trivially on $\Z_p/p^2\Z_p$, then every open subgroup of $G$ satisfies the extension of homomorphisms property of Proposition~\ref{extension}. Hence, $(i)$ follows from Corollary~\ref{commutator open}.
In general (for an arbitrary action), we obtain a continuous homomophism from $G$ to $\textrm{Aut}(\Z_p/p^2\Z_p)$ with kernel $M$ such that $|G:M| \leq p$.
Now $M$ acts trivially on $\Z_p/p^2\Z_p$, and it follows from $(i)$ that $M$ is strongly commutator-resistant, whence $(ii)$.

For the proof of $(iii)$, let $U$ be an open subgroup of $G$, and let $M$ be a maximal subgroup of $U$.
By Proposition~\ref{open-resistant}, it suffices to prove that $(U, M, \Phi(M))$ is a hierarchical triple. Upon identifying $U/M$ with $\Z_p/p\Z_p$, we may consider the natural projection ${\phi:U \to U/M}$ as an element of  
$H^1(U, \Z_p/p\Z_p)=\textrm{Hom}(U, \Z_p/p\Z_p)$. The surjectivity of the homomoprhism $H^1(U, \Z_p/p^2\Z_p) \to H^1(U, \Z_p/p\Z_p)$ implies the existence of a derivation ($1$-cocycle)
$d:U \to \Z_p/p^2\Z_p$ such that $\pi \circ d=\phi$. For every $x \in M$, we have $\pi(d(x))=\phi(x)=0$; so, $d(M) \leq p\Z_p/p^2\Z_p$. Since $p\Z_p/p^2\Z_p$ is necessarily a trivial $G$-module, the restriction of $d$ to $M$
is a homomoprhism. It follows that $d(\Phi(M))=0$.

Let $x \in U \setminus M$. Then $\phi(x) \neq 0$, and thus $d(x) \notin p\Z_p/p^2\Z_p$. Now $x \cdot d(x)= \alpha d(x)$ for some $\alpha = 1+ lp$ with $0 \leq l \leq p-1$, and 
\[d(x^p)=(1+\alpha+ \ldots + \alpha^{p-1})d(x).\]
If $\alpha=1$, then $d(x^p)=pd(x) \neq 0$, and thus $x^p \notin \Phi(M)$; otherwise 
$$1+\alpha+ \ldots + \alpha^{p-1}=\frac{\alpha^p-1}{\alpha -1}=\frac{\sum_{i=1}^p\binom{p}{i}(lp)^i}{lp}=p+p^2[\frac{p-1}{2}l+\sum_{i=3}^p\binom{p}{i}p^{i-3}l^{i-1}].$$
Since $p$ is assumed to be an odd prime, it follows that $p^2$ does not divide $\displaystyle{\frac{\alpha^p-1}{\alpha -1}}$. Therefore, $d(x^p) \neq 0$ and $x^p \notin \Phi(M)$.    
\end{proof}

\medskip

\begin{proof}[Proof of Theorem~\ref{1-smooth}]
For $p$ odd, this follows from Lemma~\ref{quasi-smooth} $(iii)$. If $p=2$ and $\textrm{Im}(\theta) \leq 1+4\Z_2$, then $G$ acts trivially on $\Z_2(1)/4\Z_2(1)$ and by Lemma~\ref{quasi-smooth}, $G$ is commutator-resistant. 
\end{proof}

The following two corollaries, in particular, subsume Theorem~\ref{Galois-analytic} and Theorem~\ref{Galois-solvable}. They were recently proved by Quadrelli  \cite{Qu20a}, \cite{Qu20b}.

\begin{cor}
Let $G$ be a $p$-adic analytic pro-$p$ group. Then, there exists a homomorphism $\theta:G \to 1+p\Z_p$ (with $\mathrm{Im}(\theta) \leq 1+4\Z_2$ if $p=2$) such that $(G, \theta)$ is a $1$-smooth cyclotomic pro-$p$ pair if and only if $G$ is one of the groups listed in Theorem~\ref{main}. 
\end{cor}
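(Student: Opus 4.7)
The plan is to prove each implication as a short consequence of results already in the paper: the forward direction via Lemma~\ref{quasi-smooth} and Theorem~\ref{main}, and the backward direction via Corollary~\ref{Galois-solvable} together with the fact that cyclotomic characters of maximal pro-$p$ Galois groups produce $1$-smooth pairs.

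For the forward direction, suppose $G$ is $p$-adic analytic and $(G,\theta)$ is a $1$-smooth cyclotomic pro-$p$ pair (with $\mathrm{Im}(\theta) \leq 1+4\Z_2$ if $p=2$). The $n=2$ instance of $1$-smoothness states that for every open subgroup $U$ of $G$, the quotient map $\Z_p(1)/p^2\Z_p(1) \to \Z_p(1)/p\Z_p(1)$ induces a surjection on $H^1(U,-)$. Endowing $\Z_p/p^2\Z_p$ with the $G$-module structure coming from $\theta$ mod $p^2$, this is precisely the hypothesis of Lemma~\ref{quasi-smooth}. For $p$ odd, part (iii) yields that $G$ is strongly Frattini-resistant; for $p=2$ the assumption $\mathrm{Im}(\theta) \leq 1+4\Z_2$ makes $\Z_2(1)/4\Z_2(1)$ a trivial $G$-module, so part (i) gives strong commutator-resistance, hence strong Frattini-resistance. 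In either case, $G$ is Frattini-injective, and Theorem~\ref{main} identifies $G$ with one of the groups listed there.

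For the backward direction, let $G$ be one of the groups listed in Theorem~\ref{main}. Restricting Corollary~\ref{Galois-solvable} to the $p$-adic analytic case, one obtains a field $k$ containing a primitive $p$th root of unity (and also $\sqrt{-1}$ if $p=2$) such that $G \cong G_k(p)$. Transport to $G$ the cyclotomic character $\theta_{k,p}:G_k(p)\to 1+p\Z_p$, defined by $\sigma(\zeta)=\zeta^{\theta_{k,p}(\sigma)}$ for all $\zeta\in \mu_{p^\infty}$. As recalled in the paragraph preceding Lemma~\ref{quasi-smooth}, Hilbert~$90$ implies that $(G_k(p),\theta_{k,p})$ is a $1$-smooth cyclotomic pro-$p$ pair. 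When $p=2$ and $\sqrt{-1}\in k$, every $\sigma\in G_k(2)$ fixes $\zeta_4=\sqrt{-1}$, forcing $\theta_{k,2}(\sigma)\equiv 1\pmod 4$ and thus $\mathrm{Im}(\theta_{k,2}) \leq 1+4\Z_2$. This produces the required $\theta$.

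The main obstacle on first reading looks like it might be the torsion-free hypothesis appearing in Theorem~\ref{1-smooth}, but this is bypassed by going through Lemma~\ref{quasi-smooth} directly (its proof does not assume torsion-freeness, and torsion-freeness emerges \emph{a posteriori} from Frattini-injectivity). The deeper input, which we simply cite, is the realization step in the backward direction: Corollary~\ref{Galois-solvable} is the result where explicit field-theoretic constructions are required, and everything else in the present corollary reduces to Frattini-resistance and the classification of Theorem~\ref{main}.
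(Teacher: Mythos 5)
Your proof is correct and takes essentially the same route as the paper, whose entire proof is the one-liner that the statement ``follows from Theorem~\ref{1-smooth} and the well-known fact that the groups listed in Theorem~\ref{main} can be realized as maximal pro-$p$ Galois groups''; you have simply unwound those citations, since Lemma~\ref{quasi-smooth} is precisely how Theorem~\ref{1-smooth} is proved, and Corollary~\ref{Galois-solvable} together with the $1$-smoothness of $(G_k(p),\theta_{k,p})$ (with $\mathrm{Im}(\theta_{k,2})\leq 1+4\Z_2$ when $\sqrt{-1}\in k$) is exactly the realization input. Your one refinement is worth noting: invoking Lemma~\ref{quasi-smooth} directly sidesteps the torsion-freeness hypothesis that Theorem~\ref{1-smooth} formally carries but the corollary does not assume, and your remark that torsion-freeness emerges \emph{a posteriori} from Frattini-injectivity closes a detail the paper's terse citation glosses over.
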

\begin{proof}
This follows from Theorem~\ref{1-smooth} and the well-known fact that the groups listed in Theorem~\ref{main} can be realized as maximal pro-$p$ Galois groups.
\end{proof}

\begin{cor}
Let $G$ be a solvable pro-$p$ group. Then, there exists a homomorphism $\theta:G \to 1+p\Z_p$  (with $\mathrm{Im}(\theta) \leq 1+4\Z_2$ if $p=2$)  such that $(G, \theta)$ is a $1$-smooth cyclotomic pro-$p$ pair if and only if 
$G$ is free abelian or it is a semidirect product $\langle x \rangle \ltimes A$, where $\langle x \rangle \cong \Z_p$, $A$ is a free abelian pro-$p$ group and 
$x$ acts on $A$ as scalar multiplication by $1+p^s$ with $s \geq 1$ if $p$ is odd, and $s \geq 2$ if $p=2$.
\end{cor}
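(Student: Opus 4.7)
The plan is to bundle this corollary as a direct consequence of Lemma~\ref{quasi-smooth}, Theorem~\ref{solvable}, and the realizability part of Corollary~\ref{Galois-solvable}.

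For the forward direction, suppose $(G,\theta)$ is a $1$-smooth cyclotomic pro-$p$ pair with $G$ solvable and, when $p=2$, $\mathrm{Im}(\theta) \leq 1+4\Z_2$. Taking $n=2$ in the definition of $1$-smoothness, the projection $\Z_p(1)/p^2\Z_p(1) \to \Z_p(1)/p\Z_p(1)$ induces a surjection on $H^1(U,-)$ for every open subgroup $U$; this is precisely the hypothesis of Lemma~\ref{quasi-smooth} with the $G$-module $\Z_p/p^2\Z_p$ twisted by $\theta \bmod p^2$. When $p$ is odd, Lemma~\ref{quasi-smooth}~(iii) yields that $G$ is strongly Frattini-resistant. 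When $p=2$, the assumption $\mathrm{Im}(\theta) \leq 1+4\Z_2$ forces $G$ to act trivially on $\Z_2(1)/4\Z_2(1)$, and Lemma~\ref{quasi-smooth}~(i) then gives that $G$ is strongly commutator-resistant. In either case, Corollary~\ref{FR} yields Frattini-injectivity of $G$, and since $G$ is solvable, Theorem~\ref{solvable} forces $G$ to be one of the listed groups.

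For the reverse direction, every group $G$ in the list is realized as $G_k(p)$ for some field $k$ containing a primitive $p$th root of unity (and $\sqrt{-1}$ when $p=2$): this is exactly the ``$\Leftarrow$'' half of Corollary~\ref{Galois-solvable}, which in turn rests on the classical constructions of Ware \cite{Ware92}, Jacob--Ware \cite{JaWa89}, \cite{Wa79}, and Quadrelli \cite{Qu14}. Given such a $k$, I would take $\theta$ to be the cyclotomic pro-$p$ character $\theta_{k,p}\colon G_k(p) \to 1+p\Z_p$. As recalled just before Theorem~\ref{1-smooth}, Hilbert~$90$ implies that $(G_k(p),\theta_{k,p})$ is a $1$-smooth cyclotomic pro-$p$ pair, and the hypothesis $\sqrt{-1}\in k$ when $p=2$ gives $\mathrm{Im}(\theta_{k,2}) \leq 1+4\Z_2$ because $G_k(2)$ fixes $\mu_4$.

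I do not anticipate a genuine obstacle: the forward direction reduces cleanly via Lemma~\ref{quasi-smooth} to the solvable Frattini-injective classification of Theorem~\ref{solvable}, and the reverse direction is a concatenation of the realizability statement in Corollary~\ref{Galois-solvable} with the standard observation that the cyclotomic character on a maximal pro-$p$ Galois group is $1$-smooth. The only delicate point is to confirm that Corollary~\ref{Galois-solvable} indeed covers the case where the abelian factor $A$ has infinite rank; if there is any doubt, the semidirect product $\langle x\rangle \ltimes A$ can be written as the inverse limit of its finitely generated quotients $\langle x\rangle \ltimes A_i$, each realized as some $G_{k_i}(p)$, and the corresponding compositum of fields $k_i$ realizes $G$ itself, after which the cyclotomic character on the limit furnishes $\theta$.
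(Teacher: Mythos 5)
Your proposal is correct and takes essentially the same route as the paper: the paper's proof is literally ``Theorem~\ref{1-smooth} and Theorem~\ref{solvable}'', where Theorem~\ref{1-smooth} is itself proved via Lemma~\ref{quasi-smooth} exactly as you unpack it, and the reverse direction is handled just as in the paper's proof of the preceding $p$-adic analytic corollary, namely realizability as $G_k(p)$ together with the Hilbert~$90$ observation that $(G_k(p),\theta_{k,p})$ is $1$-smooth with $\mathrm{Im}(\theta_{k,2})\leq 1+4\Z_2$ when $\sqrt{-1}\in k$. One caution: your fallback for infinite rank $A$ via a ``compositum of the fields $k_i$'' is not sound --- passing to a compositum enlarges the base field and cuts the Galois group down rather than producing an inverse limit --- but it is also unnecessary, since Corollary~\ref{Galois-solvable} (resting on Ware and Quadrelli) already covers arbitrary rank, and if one wanted an intrinsic argument one could instead note that every open subgroup of $\langle x\rangle\ltimes A$ is the preimage of an open subgroup of a finitely generated quotient $\langle x\rangle\ltimes A_i$, so the required surjectivity on $H^1$ passes to the inverse limit.
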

\begin{proof}
This follows from Theorem~\ref{1-smooth} and Theorem~\ref{solvable}.
\end{proof}

By Theorem~\ref{max-normal}, for a field $k$ containing a primitive $p$th root of unity (and also $\sqrt{-1} \in k$ if $p=2$), $G_k(p)$ contains a unique normal abelian subgroup $N$.
Moreover, it was proved in \cite{QuWe18} that
$$N=\{h \in \ker \theta_{k,p} \mid ghg^{-1}=h^{\theta_{k,p}(g)} \text{ for all } g \in G_k(p)\},$$
where $\theta_{k,p}$ is the cyclotomic pro-$p$ character defined above. 

Given a pro-$p$ group $G$ and a homomorphism $\theta:G \to 1+p\Z_p$, let
$$Z_{\theta}(G):=\{h \in \ker \theta \mid ghg^{-1}=h^{\theta(g)} \text{ for all } g \in G\}.$$
Note that $Z_{\theta}(G)$ is a normal abelian subgroup of $G$. 
\begin{pro}
\label{theta-center}
Let $G$ be a Frattini-injective pro-$p$ group containing a non-trivial normal abelian subgroup.
The following assertions hold:
\begin{enumerate}[(i)]
\item There exists a unique homomorphism ${\theta: G \to 1+p\Z_p}$ such that 
$Z_{\theta}(G)$ is the (unique) maximal abelian normal subgroup of $G$.
\item If $G$ is not pro-cyclic and for some $\psi: G \to 1+p\Z_p$, $\mathcal{G}=(G, \psi)$ is a $1$-smooth cyclotomic pro-$p$ pair (with $\mathrm{Im}(\psi) \leq 1+4\Z_2$ if $p=2$), then $\psi=\theta$.
\end{enumerate}
\end{pro}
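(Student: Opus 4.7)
Starting with Part (i), I invoke Theorem~\ref{max-normal} to obtain the unique maximal normal abelian subgroup $N$ of $G$ (non-trivial by hypothesis, isolated in $G$, and with every subgroup normal in $G$). Consequently, for each $g \in G$ and each $h \in N \setminus \{1\}$, conjugation by $g$ preserves $\langle h \rangle \cong \Z_p$, so $ghg^{-1} = h^{\alpha(g,h)}$ for a unique $\alpha(g,h) \in \Z_p^{\times}$. The first step is to show $\alpha(g,h)$ is independent of the choice of $h \neq 1$: this is immediate if $N$ is pro-cyclic, and for higher rank one expands $g(h_1 h_2)g^{-1}$ in two ways and equates coordinates in the free abelian pro-$p$ group $N$, using that $\Z_p$-independent elements have disjoint cyclic subgroups. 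Writing $\theta(g)$ for the common value yields a continuous homomorphism $\theta: G \to \Z_p^{\times}$, and since $G$ is pro-$p$, $\theta(G) \subseteq 1 + p\Z_p$ (automatic for odd $p$; note $\Z_2^{\times} = 1 + 2\Z_2$). A direct verification, exploiting the commutativity of conjugation with $\Z_p$-power operations in pro-$p$ groups, shows that $Z_\theta(G)$ is a normal abelian subgroup of $G$; by construction $N \subseteq Z_\theta(G)$, and maximality forces equality. Uniqueness is immediate: any $\theta'$ with $Z_{\theta'}(G) = N$ must satisfy $ghg^{-1} = h^{\theta'(g)}$ for each $h \in N$, and fixing any $h \neq 1$ in $N$ forces $\theta' = \theta$ by the torsion-freeness of $\langle h \rangle$.

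For Part (ii), the very same calculation shows that $Z_\psi(G)$ is a normal abelian subgroup of $G$, hence $Z_\psi(G) \leq N$. The plan is then to reduce the conclusion to the statement $Z_\psi(G) \neq 1$: if $h \in Z_\psi(G)$ is non-trivial, then $h \in N$ provides two expressions for $ghg^{-1}$, namely $h^{\theta(g)}$ from Part~(i) and $h^{\psi(g)}$ from the definition of $Z_\psi(G)$, and torsion-freeness of $\langle h \rangle \cong \Z_p$ then yields $\psi(g) = \theta(g)$ for every $g \in G$.

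The main obstacle is thus showing $Z_\psi(G) \neq 1$, and this is where the $1$-smoothness hypothesis and the non-pro-cyclicity of $G$ must both be used essentially. My approach is to use $1$-smoothness to lift a suitable non-trivial character $\phi: G \to \F_p$ to a continuous derivation $d: G \to \Z_p(1)$, constructed as an inverse limit of cocycle lifts $G \to \Z_p(1)/p^n\Z_p(1)$. For such a derivation, one computes $d(ghg^{-1}) = \psi(g)\,d(h)$ for $h \in \ker \psi$ (using $d(1)=0$ to get $\psi(g)d(g^{-1}) = -d(g)$), and $d(h^n) = n\,d(h)$ on $\ker \psi$; combining these with $ghg^{-1} = h^{\theta(g)}$ valid for $h \in N \cap \ker \psi$ gives the identity $(\psi(g) - \theta(g))\,d(h) = 0$, which forces $\psi = \theta$ as soon as one can arrange $d(h) \neq 0$ for some such $h$. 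The delicate point is precisely this arrangement: non-pro-cyclicity of $G$ should make available enough characters $\phi$, but one must first establish that $\psi$ is trivial on $N$ (so that $N \subseteq \ker \psi$) and then select $h \in N$ outside $\Phi(G)$ so that the lift $d$ satisfies $\phi(h) = d(h) \bmod p \neq 0$. I expect that carrying this out cleanly will require a case analysis based on Theorem~\ref{max-normal}(iii)--(iv) (whether $Z(G)$ is trivial or not) combined with structural results for $1$-smooth cyclotomic pro-$p$ pairs such as those of Quadrelli--Weigel \cite{QuWe18} and \cite{Qu20b}; the condition $\mathrm{Im}(\psi) \leq 1+4\Z_2$ when $p=2$ enters here to guarantee that Theorem~\ref{1-smooth} applies and that the cocycle identities above retain their $p$-adic analytic form.
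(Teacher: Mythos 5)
Your part (i) is correct and, up to presentation, the same as the paper's: the paper defines $\theta$ via the decomposition $G=\langle x\rangle\ltimes C_G(N)$ from Theorem~\ref{max-normal}(iv) (trivial $\theta$ when $N=Z(G)$), while you define it directly from the action on the cyclic subgroups of $N$, which are normal by Theorem~\ref{max-normal}(ii); both come to the same thing, and your verification that $Z_\theta(G)$ is normal abelian, hence equal to $N$, is fine. In part (ii) you also isolate exactly the paper's key identity, $(\psi(g)-\theta(g))\,d(h)=0$ for a $1$-cocycle $d$ and $h\in N\cap\ker\psi$. But you then explicitly leave unproved the two steps that carry all the weight --- that $\psi(N)=1$, and that one can find $h$ together with a liftable mod-$p$ character not vanishing at $h$ --- so the proposal is incomplete precisely where the proof lives; moreover, the route you sketch for these steps would not go through as stated.

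Concretely: you want a character $\phi$ of $G$ itself with $\phi(h)\neq 0$ for some $h\in N$, which requires $N\not\leq\Phi(G)$, and nothing you cite rules out $N\leq\Phi(G)$. The paper avoids this by localizing: for each $x\in G\setminus N$ it works inside $H=\langle x,N\rangle$, which by the analysis behind Theorem~\ref{max-normal} and Lemma~\ref{metabelian} is either abelian and not pro-cyclic (since $\langle x\rangle\cap N=1$ by isolation of $N$), or $\langle x\rangle\ltimes N$ with $x$ acting by $1+p^s$. In the latter case $N$ is the isolator of $[H,H]=N^{p^s}$, and $\ker(\psi|_H)$ contains $[H,H]$ and is isolated because the target group is torsion-free --- this is the actual role of the hypothesis $\mathrm{Im}(\psi)\leq 1+4\Z_2$ when $p=2$ (note $1+2\Z_2$ contains $-1$), not, as you suggest, to make Theorem~\ref{1-smooth} applicable --- so $\psi(N)=1$; and $N\not\leq\Phi(H)$ is automatic because $H/N$ is pro-cyclic, so the required character exists at the level of $H$. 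One then uses $1$-smoothness of the restricted pair $(H,\psi|_H)$ (closed subgroups inherit the lifting property by the direct-limit argument as in Corollary~\ref{commutator open}) to produce, for each fixed $n$, a cocycle $d\colon H\to\Z_p(1)/p^n\Z_p(1)$ whose value at a suitable $h\in N$ is a unit, giving $\psi(x)\equiv\theta(x)\pmod{p^n}$ for every $n$. Note that the paper never constructs a derivation into $\Z_p(1)$ itself: your proposed inverse-limit lift needs a compactness or finiteness justification that $1$-smoothness alone does not supply, and it is unnecessary, since working modulo $p^n$ for each $n$ separately suffices. Finally, your fallback of invoking structural results of Quadrelli--Weigel to close the remaining gaps is circular in spirit, as the point of the proposition is to derive such statements from Frattini-injectivity together with $1$-smoothness.
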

\begin{proof}
$(i)$ The uniqueness part is obvious. Let $N$ be the unique maximal normal abelian subgroup of $G$, whose existence is guaranteed by Theorem~\ref{max-normal}.
We define ${\theta: G \to 1+p\Z_p}$ as follows: if $N=Z(G)$, then take $\theta$ to be the trivial homomorphism (i.e., $\theta(g)=1$ for all $g \in G$);
otherwise, by Theorem~\ref{max-normal} $(iv)$, $G=\langle x \rangle\ltimes C_G(N)$ for some suitable $x \in G$, and we let $\theta(C_G(N))=1$ and  $xax^{-1}=a^{\theta(x)}$ for any (and hence every) $a \in N$.
It readily follows from Theorem~\ref{max-normal} that indeed $N=\Z_{\theta}(G)$.

\medskip

$(ii)$ If $G=\langle x \rangle\ltimes A$ is metabelian (decomposed in a semidirect product as in Theorem~\ref{solvable}), then $A$ is the isolator of $[G,G]$, and consequently, ${\psi(A)=1}$.
Moreover, for every $n \in \mathbb{N}$, $a \in A$ and a derivation $d:G \to \Z_p(1)/p^n\Z_p(1)$, we have $\theta(x)d(a)=d(a^{\theta(x)})=d(xax^{-1})=d(a^{\psi(x)})=\psi(x)d(a)$. 
Therefore, the surjectivity of the cohomology maps $H^1(U, \Z_p(1)/p^n\Z_p(1)) \to H^1(U, \Z_p(1)/p\Z_p(1))$
implies that $\psi=\theta$. By a similar argument, if $G$ is abelian (but not pro-cyclic), $\psi$ must be the trivial homomorphism.

In general, denoting by $N$ the unique maximal normal subgroup of $G$, for an arbitrary element $x \in G \setminus N$, the group $H:=\langle x, N \rangle$ is metabelian and $(H, \psi_{\mid H})$ is a $1$-smooth cyclotomic pro-$p$ pair. It follows from what has been already proved that 
$\psi(N)=1$ and $\psi(x)=\theta(x)$. 
\end{proof}

\section{$p$-power-injective pro-$p$ groups}
\label{p-power injective}

\begin{definition}
 We say that a pro-$p$ group $G$ is \emph{strongly $p$-power-injective} 
 ({\emph{$p$-power-injective}}) if for all (finitely generated) subgroups $H$ and $K$ of $G$,
\[H^p = K^p \implies H = K. \]
We call a pro-$p$ group $G$ \emph{strongly $p$-power-resistant} (\emph{$p$-power-resistant}) if  for every  (finitely generated) subgroup $H$ of $G$, the triple $(G,H, H^p)$ is hierarchical. 
\end{definition}
 
In our opinion, these are concepts deserving careful investigation. 
However, in this brief final section, we do little more than record several statements that follow from (or could be proved in a similar manner as) the main results of this paper.
     
\begin{pro}
\label{embedding1}
A pro-$p$ group $G$ is strongly $p$-power-resistant ($p$-power-resistant) if and only if for all (finitely generated) subgroups $H$ and $K$ of $G$,
\[ H \leq K \iff  H^p \leq K^p \]
Consequently, every (strongly) $p$-power-resistant  pro-$p$ group is (strongly) $p$-power-injective. 
\end{pro}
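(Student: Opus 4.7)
The plan is to mirror the argument for Proposition~\ref{embedding}, since the only formal difference is that the Frattini subgroup $\Phi(H)=H^p[H,H]$ is replaced by $H^p$. Unwinding the definition, the triple $(G,L,L^p)$ is hierarchical precisely when $x^p\in L^p$ forces $x\in L$; and the biconditional $H\leq K\iff H^p\leq K^p$ has a trivial implication ($\Leftarrow$ of the biconditional is the content, while $\Rightarrow$ is immediate from the definition of $H^p$ as the subgroup generated by $p$th powers of $H$).

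For the forward direction of the main biconditional, assume $G$ is (strongly) $p$-power-resistant and that $H^p\leq K^p$ for (finitely generated) subgroups $H,K\leq G$. For every $x\in H$ we have $x^p\in H^p\leq K^p$, and since the triple $(G,K,K^p)$ is hierarchical, we conclude $x\in K$; thus $H\leq K$.

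For the converse, assume the embedding condition and let $L$ be a (finitely generated) subgroup of $G$. Suppose $x^p\in L^p$ for some $x\in G$. Because $L^p$ is a (closed) subgroup containing $x^p$, it contains the closed pro-cyclic group $\langle x\rangle^p=\overline{\langle x^p\rangle}$; equivalently, $\langle x\rangle^p\leq L^p$. The procyclic group $\langle x\rangle$ is finitely generated, so the hypothesis yields $\langle x\rangle\leq L$, i.e., $x\in L$. Hence $(G,L,L^p)$ is hierarchical, which shows that $G$ is (strongly) $p$-power-resistant.

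The final assertion is immediate: if $G$ is (strongly) $p$-power-resistant and $H^p=K^p$, then by the biconditional $H^p\leq K^p$ gives $H\leq K$ and $K^p\leq H^p$ gives $K\leq H$, so $H=K$. The whole argument is essentially formal; the only point worth checking is that $L^p$ is closed so that $x^p\in L^p$ really forces $\overline{\langle x^p\rangle}\leq L^p$, but this is built into our convention that subgroups of pro-$p$ groups are closed. I expect no genuine obstacle.
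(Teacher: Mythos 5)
Your proof is correct and follows exactly the route the paper intends: the paper states Proposition~\ref{embedding1} without a separate proof, noting that it ``could be proved in a similar manner as'' Proposition~\ref{embedding}, and your argument is precisely that adaptation, replacing $\Phi(\langle x\rangle)=\langle x^p\rangle$ with $\langle x\rangle^p=\overline{\langle x^p\rangle}$ in the converse direction. Your closing remark that $L^p$ is closed (so that $x^p\in L^p$ forces $\langle x\rangle^p\leq L^p$) is the right point to check, and it is indeed covered by the paper's convention that all subgroups are closed.
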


\begin{pro}
\label{open-resistant1}
Let $G$ be a pro-$p$ group. If $(G, U, U^p)$ is a hierarchical triple for every open subgroup $U$ of $G$, then
$G$ is strongly $p$-power-resistant. In particular, a finitely generated $p$-power-resistant pro-$p$ group is strongly $p$-power-resistant.  
\end{pro}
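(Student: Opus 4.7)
The plan is to mimic, almost verbatim, the proof of Proposition~\ref{open-resistant}, with the Frattini functor $H \mapsto \Phi(H)$ replaced by the $p$-power functor $H \mapsto H^p$ throughout. The one elementary fact that makes such a transcription work is the monotonicity $H \leq K \Rightarrow H^p \leq K^p$, which holds because $H^p$ is (topologically) generated by the $p$-th powers of elements of $H$, all of which lie in $K^p$.

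For the main assertion, I would fix a proper subgroup $H$ of $G$ together with an element $x \in G \setminus H$, aiming to conclude $x^p \notin H^p$; this is exactly what is needed for the triple $(G, H, H^p)$ to be hierarchical. Since $H$ is closed in the pro-$p$ group $G$, it is the intersection of the open subgroups containing it, so there exists an open subgroup $U$ of $G$ with $H \leq U$ and $x \notin U$. The hypothesis supplies that $(G, U, U^p)$ is hierarchical; hence $x \notin U$ forces $x^p \notin U^p$. Combined with $H^p \leq U^p$ (by the monotonicity noted above), this yields $x^p \notin H^p$, as desired.

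The ``in particular'' clause follows at once. If $G$ is finitely generated, then every open subgroup of $G$ has finite index, and by the standard fact that open subgroups of finitely generated pro-$p$ groups are again finitely generated, every open $U \leq G$ is finitely generated. The assumption of $p$-power-resistance then immediately gives that $(G, U, U^p)$ is hierarchical for every open $U$, and the first part of the proposition completes the argument.

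I do not anticipate any real obstacle here: the proof is a direct analogue of Proposition~\ref{open-resistant}, and the notion of hierarchical triple was designed precisely to make such separation-by-open-subgroup arguments uniform. The only step requiring a moment of thought is the monotonicity $H^p \leq U^p$, which for $\Phi$ was essentially automatic (as $\Phi$ is intersection-closed with the open subgroups) but for $H^p$ must be justified via the description of $H^p$ as the closed subgroup generated by $p$-th powers.
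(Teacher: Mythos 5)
Your proof is correct and is precisely the argument the paper intends: the paper gives no explicit proof in Section~\ref{p-power injective}, stating only that these results ``could be proved in a similar manner as'' earlier ones, and your transcription of the proof of Proposition~\ref{open-resistant} (separating $x$ from the closed subgroup $H$ by an open $U$, applying the hierarchical hypothesis to $U$, and using $H^p \leq U^p$) is exactly that intended adaptation. The one point you rightly flag --- that $H \leq U$ implies $H^p \leq U^p$ since $H^p$ is topologically generated by $p$-th powers --- is the only verification beyond the verbatim substitution, and you handle it correctly.
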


\begin{pro}
\label{open-resistant1}
A (strongly) Frattini-resistant pro-$p$ group is (strongly) $p$-power-resistant.  
\end{pro}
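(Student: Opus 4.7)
The plan is to observe that this proposition is an essentially immediate consequence of the inclusion $H^p \leq \Phi(H)$, valid for any subgroup $H$ of a pro-$p$ group (since $\Phi(H)=H^p[H,H]$). There is no genuine obstacle; the only task is to unwind the definitions and match them against one another.

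Concretely, suppose $G$ is Frattini-resistant, and let $H$ be a finitely generated subgroup of $G$. To prove $(G,H,H^p)$ is hierarchical, I would pick any $x \in G$ with $x^p \in H^p$. Because $H^p \leq \Phi(H)$, this yields $x^p \in \Phi(H)$, and Frattini-resistance applied to the finitely generated subgroup $H$ gives $(G,H,\Phi(H))$ hierarchical, hence $x \in H$. Thus $(G,H,H^p)$ is hierarchical, which by Proposition~\ref{embedding1} is the required $p$-power-resistance. The strongly Frattini-resistant case is identical, only letting $H$ range over all (not necessarily finitely generated) subgroups of $G$; one then concludes via the strong form of Proposition~\ref{embedding1} that $G$ is strongly $p$-power-resistant.
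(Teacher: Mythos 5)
Your proof is correct and is essentially the paper's own argument, which consists of the single observation that $H^p \leq \Phi(H)$ for every subgroup $H$. One tiny remark: the final appeal to Proposition~\ref{embedding1} is unnecessary, since the hierarchical-triple condition $(G,H,H^p)$ you verify is itself the definition of ($p$-power-)resistance given in the paper.
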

\begin{proof}
This follows from the fact that $G^p \leq \Phi(G)$ for every pro-$p$ group $G$.
\end{proof}

For a pro-$2$ group $G$, we have $\Phi(G)=G^2$. Hence, $2$-power-injectivity ($2$-power-resistance)
is the same as Frattini-injectivity (Frattini-resistance).

\begin{cor}
\label{$p$-power-resistant-free}
\begin{enumerate}[(1)]
\item Every free pro-$p$ group is  strongly $p$-power-resistant. 
\item Let $G$ be a Demushkin pro-$p$ group. Then, the following assertions hold: 
\begin{enumerate}[(i)]
\item If $q(G) \neq p$, or $q(G)=p$ and $p$ is odd, then $G$ is strongly $p$-power-resistant.
\item If $q(G)=2$ and $d(G)>2$, then $G$ is $p$-power-injective, but not $p$-power-resistant.  
\item If $q(G)=2$ and $d(G)=2$, then $G$ is not $p$-power-injective.
\end{enumerate} 
\item Let $k$ be a field that contains a primitive $p$th root of unity.  If $p=2$, in addition, assume that $ \sqrt{-1}  \in k$. Then $G_k(p)$ is strongly $p$-power-resistant. 
\end{enumerate}
\end{cor}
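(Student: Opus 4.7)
The plan is to deduce the entire corollary from results already established in the paper, using the trivial fact that $G^p \leq \Phi(G)$ for every pro-$p$ group $G$. This inclusion immediately gives that the triple $(G,H,\Phi(H))$ being hierarchical forces $(G,H,H^p)$ to be hierarchical as well; consequently, every (strongly) Frattini-resistant pro-$p$ group is automatically (strongly) $p$-power-resistant (this is exactly the proposition stated just before the corollary). Thus parts (1), (2)(i), and (3) reduce to citing the corresponding Frattini-resistance results.

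More precisely, for part (1) I would invoke Theorem~\ref{free}: every free pro-$p$ group is strongly commutator-resistant, hence strongly Frattini-resistant, hence strongly $p$-power-resistant. For part (2)(i), Theorem~\ref{q>2} gives that a Demushkin group $G$ with $q(G)\neq p$, or with $q(G)=p$ and $p$ odd, is strongly Frattini-resistant; the conclusion follows. For part (3), if $p$ is odd the field $k$ already contains a primitive $p$th root of unity, so Theorem~\ref{Galois-Frattini-resistant} yields strong Frattini-resistance of $G_k(p)$. If $p=2$ and $\sqrt{-1}\in k$, then $k$ contains a primitive $4$th (that is, $p^2$th) root of unity, and Theorem~\ref{p^2 root} gives the even stronger conclusion that $G_k(2)$ is strongly commutator-resistant, in particular strongly $p$-power-resistant.

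Parts (2)(ii) and (2)(iii) require one additional observation: since $q(G)=p^e$ for some $e \geq 1$ with $p^e=2$, the group $G$ is a pro-$2$ group. As remarked in the text immediately preceding the corollary, for a pro-$2$ group $G$ one has $\Phi(G)=G^2$, so the notions of Frattini-injectivity and $2$-power-injectivity (respectively, of Frattini-resistance and $2$-power-resistance) coincide. Consequently, parts (2)(ii) and (2)(iii) are literally restatements of Theorem~\ref{Demuskin-2}.

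There is no substantive obstacle: the corollary is, in effect, a dictionary of consequences of already-proved theorems together with the inclusion $G^p \leq \Phi(G)$. The only minor verification is in part (3), where one must note that when $p=2$, the hypothesis $\sqrt{-1}\in k$ supplies a primitive $p^2$th root of unity, so Theorem~\ref{p^2 root} applies directly.
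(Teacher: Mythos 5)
Your proposal is correct and matches the paper's intended argument exactly: the paper gives no separate proof of this corollary, since it is meant to follow from the proposition that (strongly) Frattini-resistant implies (strongly) $p$-power-resistant (via $G^p \leq \Phi(G)$), the remark that $\Phi(G)=G^2$ for pro-$2$ groups, and the theorems on free pro-$p$ groups, Demushkin groups, and maximal pro-$p$ Galois groups that you cite. Your one point of care --- that $q(G)=2$ forces $p=2$ so parts (2)(ii)--(iii) are literal restatements of the Demushkin theorem, and that $\sqrt{-1}\in k$ supplies a primitive $p^2$th root of unity when $p=2$ in part (3) --- is precisely the bookkeeping the paper leaves implicit.
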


In contrast to Frattini-resistance, $p$-adic analytic $p$-power-resistant pro-$p$ groups are ubiquitous. 

\begin{pro}
\label{$p$-power-resistant-Analytic}
Every torsion free $p$-adic analytic pro-$p$ group of dimension less than $p$  is (strongly) $p$-power-resistant.  
\end{pro}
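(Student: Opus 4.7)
The plan is to prove the stronger statement (strong $p$-power-resistance) by exploiting the Lie-theoretic structure of saturable pro-$p$ groups via the Lazard correspondence. By \cite[Theorem~E]{GoKl09}, the hypothesis that $G$ is a torsion-free $p$-adic analytic pro-$p$ group of dimension less than $p$ implies that $G$ is saturable. An arbitrary (closed) subgroup $H$ of $G$ inherits all three properties: it is $p$-adic analytic (as a closed subgroup of a $p$-adic analytic group), torsion-free, and of dimension $\leq \dim G < p$. Hence, by the same theorem applied to $H$, every subgroup of $G$ is saturable as well.

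Next I would invoke the Lazard correspondence to identify each such $H$ with its associated saturable $\Z_p$-Lie algebra $L_H$ in a way that converts the $p$th power map $h\mapsto h^p$ in $H$ into the multiplication-by-$p$ map on $L_H$. Two consequences would then be extracted. First, $G$ itself has the unique extraction of $p$th roots property, because multiplication by $p$ is injective on $L_G$. Second, the set $\{h^p : h \in H\}$ corresponds to $pL_H$, which is a closed $\Z_p$-Lie subalgebra of $L_H$ (being a closed submodule that is even an ideal under the Lie bracket). Translating back, $\{h^p : h \in H\}$ is already a closed subgroup of $H$; since it contains every $p$th power in $H$ and is plainly contained in $H^p$, equality $H^p = \{h^p : h \in H\}$ follows.

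With these two ingredients in hand, strong $p$-power-resistance is immediate: if $H\leq G$ and $x\in G$ satisfy $x^p \in H^p$, then $x^p = h^p$ for some $h\in H$ by the preceding paragraph, and unique extraction of $p$th roots in $G$ forces $x = h \in H$. Hence $(G,H,H^p)$ is a hierarchical triple for every subgroup $H$ of $G$, which is exactly strong $p$-power-resistance, and $p$-power-resistance follows \emph{a fortiori}.

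The main technical point — and the only place where the dimension hypothesis $\dim G < p$ is truly used — is the identification $H^p=\{h^p : h\in H\}$ for saturable $H$. In the uniform setting this is \cite[Lemma~3.4]{DidSMaSe99}, but here one needs the more general version for saturable pro-$p$ groups, which has to be justified via the full Lazard correspondence rather than merely cited; once that is in place, the rest of the argument is a short verification.
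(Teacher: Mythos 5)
Your proposal is correct and takes essentially the same route as the paper: both rest on the saturability of every closed subgroup (the paper cites \cite[Theorem~A]{GoKl09}, you derive it by applying Theorem~E to each subgroup) and on the Lazard correspondence fact $L_{H^p}=pL_H$, which is precisely your identification of $H^p$ with the set $\{h^p : h\in H\}$. The only cosmetic difference is that the paper verifies the poset-embedding criterion of Proposition~\ref{embedding1} via the chain $pL_H\leq pL_K \implies L_H\leq L_K \implies H\leq K$, whereas you check the hierarchical-triple definition directly using unique extraction of $p$th roots.
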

\begin{proof}
Let $G$ be a  torsion free $p$-adic analytic pro-$p$ group of dimension less than $p$.  By \cite[Theorem~A]{GoKl09}, every closed subgroup of $G$ is saturable. Thus to every subgroup $H$ of $G$ we can associate a saturable $\mathbb{Z}_p$-Lie algebra $L_H$; moreover, $L_{H^p} = pL_H$.
Now let $H$ and $K$ be  subgroups of $G$ such that  $H^p \leq K^p$. Then 
\[ H^p \leq K^p \implies  pL_H \leq  pL_K \implies L_H \leq L_K  \implies H \leq K. \]
Hence, by Proposition \ref{embedding1}, $G$ is a (strongly) $p$-power-resistant pro-$p$ group.
\end{proof}

\begin{cor}
\label{$p$-power-resistant-Analytic-uncountable}
Suppose that $p \geq 5$. Then there are uncountably many pairwise non-commensurable $p$-power-resistant  $p$-adic analytic pro-$p$ groups.
\end{cor}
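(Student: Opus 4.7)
The plan is to exhibit an uncountable family $\{G_\beta\}$ of pairwise non-commensurable $4$-dimensional torsion-free $p$-adic analytic pro-$p$ groups. Since $4 < p$ whenever $p \geq 5$, Proposition~\ref{$p$-power-resistant-Analytic} will then make each $G_\beta$ strongly $p$-power-resistant, and the corollary follows at once.

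Concretely, I would set $\lambda := 1 + p$ and, for each $\beta \in \mathbb{Z}_p \setminus \{0, 1\}$, define
\[G_\beta := \langle t \rangle \ltimes \mathbb{Z}_p^{3},\]
where $\langle t \rangle \cong \mathbb{Z}_p$ acts on $\mathbb{Z}_p^{3}$ as the diagonal matrix $\mathrm{diag}(\lambda, \lambda, \lambda^\beta)$. Each $G_\beta$ is plainly a torsion-free uniform pro-$p$ group of dimension $4$. To establish pairwise non-commensurability, I would invoke the standard consequence of the Lazard correspondence that commensurable $p$-adic analytic pro-$p$ groups have isomorphic $\mathbb{Q}_p$-Lie algebras. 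The $\mathbb{Q}_p$-Lie algebra of $G_\beta$ is $L_\beta := \mathbb{Q}_p \cdot x \oplus V$ with $V = \mathbb{Q}_p^{3}$ abelian and $\mathrm{ad}(x)|_V = (\log \lambda)\cdot \mathrm{diag}(1, 1, \beta)$; note that $V = [L_\beta, L_\beta]$ because $\beta \neq 0$. Any isomorphism $\varphi \colon L_\beta \to L_{\beta'}$ must therefore restrict to a linear isomorphism $V \to V'$, and we may write $\varphi(x) = c x' + w$ for some $c \in \mathbb{Q}_p^\times$ and $w \in V'$. Since $V'$ is abelian, the identity $\varphi \circ \mathrm{ad}(x) = \mathrm{ad}(\varphi(x)) \circ \varphi$ forces $\mathrm{diag}(1, 1, \beta)$ and $c \cdot \mathrm{diag}(1, 1, \beta')$ to be conjugate in $GL_3(\mathbb{Q}_p)$, equivalently that the eigenvalue multisets $\{1, 1, \beta\}$ and $\{c, c, c\beta'\}$ coincide. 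Since $\beta, \beta' \neq 1$, each multiset has a unique repeated element; matching these gives $c = 1$, whence $\beta = \beta'$. Thus $\{G_\beta : \beta \in \mathbb{Z}_p \setminus \{0, 1\}\}$ is a continuum of pairwise non-commensurable groups.

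The only genuinely computational step is the eigenvalue-multiset bookkeeping above; everything else is routine packaging. I do not anticipate substantial obstacles beyond identifying $L_\beta$ via Lazard (immediate from the explicit diagonal action on a free $\mathbb{Z}_p$-module) and verifying $V = [L_\beta, L_\beta]$ (immediate from $\beta \neq 0$, since then all three weights $\log \lambda, \log \lambda, \beta \log \lambda$ are non-zero). One could of course replace this family by any other one-parameter family of $\mathbb{Q}_p$-Lie algebras with pairwise distinct invariants; the semidirect products chosen above have the virtue of being manifestly uniform pro-$p$ groups of dimension $4$ for every value of the parameter.
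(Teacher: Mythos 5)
Your proof is correct, but it takes a genuinely different route from the paper, whose entire proof of this corollary is a citation: it invokes \cite[Theorem~1.1]{Sno16}, which supplies uncountably many pairwise non-commensurable torsion-free $p$-adic analytic pro-$p$ groups of dimension less than $p$, and combines this (implicitly) with Proposition~\ref{$p$-power-resistant-Analytic}, exactly as you do in your reduction step. You replace the external citation with a self-contained construction, and your construction is sound: $\lambda^\beta=(1+p)^\beta$ is well defined since $\lambda\in 1+p\Z_p$; each $G_\beta$ is finitely generated, powerful (all commutators land in $p\,\Z_p^3\leq G_\beta^p$) and torsion-free, hence uniform of dimension $4<p$; the identification $[L_\beta,L_\beta]=V$ holds because all three weights $\log\lambda,\log\lambda,\beta\log\lambda$ are nonzero; the fact that commensurable $p$-adic analytic pro-$p$ groups have isomorphic $\Q_p$-Lie algebras is indeed standard; and the unique-repeated-eigenvalue matching in $\{1,1,\beta\}=\{c,c,c\beta'\}$ forces $c=1$ and $\beta=\beta'$ precisely because $\beta,\beta'\neq 1$. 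In spirit your argument is close to what the cited theorem does anyway (Snopce's examples are also metabelian families $\Z_p\ltimes\Z_p^n$ distinguished by Lie-algebra invariants), so what each approach buys is clear: the paper gets a one-line proof leaning on published work, while your version is fully self-contained, and your dimension-$4$ device of repeating the weight $1$ is a nice touch --- a minimal $3$-dimensional family with weights $(1,\beta)$ would be projectively equivalent under $\beta\leftrightarrow 1/\beta$, so you would have to restrict the parameter set; duplicating the weight $1$ eliminates that ambiguity at no cost.
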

\begin{proof}
This follows from \cite[Theorem~1.1]{Sno16}.
\end{proof}

\section{Final Remarks}

In this final section, we formulate several problems that we hope will stimulate further research on Frattini-injective pro-$p$ groups.

\begin{problem}
Is every finitely generated Frattini-injective pro-$p$ group strongly Frattini-injective?
\end{problem}

Theorem~\ref{Demuskin-2} provides examples of Frattini-injective pro-$2$ groups that are not Frattini-resistant.
We do not know any such examples for $p$ odd.

\begin{problem}
For $p$ an odd prime, find examples of Frattini-injective pro-$p$ groups that are not Frattini-resistant, or prove that such groups do not exist.
\end{problem}

In \cite{Ware92}, Ware proved that for $p$ odd the maximal pro-$p$ Galois group of a field containing a primitive $p$th root of unity is either metabelian or it contains a non-abelian free pro-$p$ subgroup. 

\begin{problem}
Does every non-metabelian Frattini-injective (Frattini-resistant) \mbox{pro-$p$} group contain a non-abelian free pro-$p$ subgroup?
\end{problem}

Following \cite{Wu85}, we call a pro-$p$ group $G$ \emph{absolutely torsion-free} if $H^{ab}$ is torsion-free for every subgroup $H$ of $G$.
Let $\mathcal{G}=(G, \theta)$ be a $1$-smooth cyclotomic pro-$p$ pair (with $\mathrm{Im}(\theta) \leq 1+4\Z_2$ if $p=2$), and suppose that $G$ has a non-trivial center. 
It follows from Proposition~\ref{theta-center} that $\theta$ is the trivial homomorphism. Moreover, the proof of Proposition~\ref{extension} can be adapted to show that $G$ is absolutely torsion-free.

\begin{problem}
Is every Frattini-injective (Frattini-resistant) pro-$p$ group with non-trivial center absolutely torsion-free? 
\end{problem}

In what follows assume that $p$ is odd. 

\begin{problem}
Let $G$ be a strongly Frattini-resistant pro-$p$ group.
Does there necessarily exist a homomorphism $\theta: G \to 1+p\Z_p$ such that $(G, \theta)$ is a $1$-smooth cyclotomic pro-$p$ pair?
(Note that if $G$ contains a non-trivial abelian normal subgroup, then Corollary~\ref{theta-center} gives a description of the only possible candidate for $\theta$.)
\end{problem}

\begin{problem}
Is every Frattini-resistant pro-$p$ group Bloch-Kato?
\end{problem}

In \cite{Ef}, Efrat introduced the class  $C$ of cyclotomic pro-$p$ pairs of elementary type. This class consists of all finitely generated cyclotomic pro-$p$ pairs which can be constructed 
from $Z_p$ and Demushkin groups using free pro-$p$ products and certain semidirect products, known also as fibre products (cf.  \cite[§ 3]{Ef}; see also \cite[§ 7.5]{QuWe18}). Given a  field $k$  that contains a primitive p$th$ root of unity,  Efrat conjectured that if $G_k(p)$ is finitely generated, then the cyclotomic pro-$p$ pair 
$(G_k(p), \theta_{k,p})$ is of elementary type;  this is the so-called elementary type conjecture (cf. \cite{Ef95}, \cite{Ef97} and  \cite{Ef}; see also  \cite[§ 7.5]{QuWe18}). 

It follows from \cite[Theorem~1.4]{QuWe18} and Theorem~\ref{1-smooth} that if $(G, \theta)$ is a  cyclotomic pro-$p$ pair of elementary type, then $G$ is a strongly Frattini resistant pro-$p$ group. 

\begin{problem}
Is there a finitely generated (strongly) Frattini resistant pro-$p$ group which is not of elementary type.
\end{problem}

A negative answer to the above question would settle the elementary type conjecture. 
On the other hand, if there exist counter examples, then they will likely be pro-$p$ groups with exotic properties.

\medskip

\ackn{The first author acknowledges support from the Alexander von Humboldt Foundation, CAPES (grant 88881.145624/2017-01), CNPq and FAPERJ.}

\bibliographystyle{plain}

\end{document}